\theoremstyle{definition}
\newtheorem{Def}{\textsc{Definition}}[section]
\newtheorem{Rem}{\textsc{Remark}}
\theoremstyle{plain}
\newtheorem{Ques}[Def]{\textsc{Question}}
\newtheorem{Lema}[Def]{\textsc{Lemma}}
\newtheorem{Corol}[Def]{\textsc{Corollary}}
\newtheorem{Teo}[Def]{\textsc{Theorem}}
\newtheorem{Cla}{Claim}
\newcommand{\restr}[2]{\left. #1 \right|_{#2}}
\def\moverlay{\mathpalette\mov@rlay}
\def\mov@rlay#1#2{\leavevmode\vtop{%
   \baselineskip\z@skip \lineskiplimit-\maxdimen
   \ialign{\hfil$\m@th#1##$\hfil\cr#2\crcr}}}
\newcommand{\charfusion}[3][\mathord]{
    #1{\ifx#1\mathop\vphantom{#2}\fi
        \mathpalette\mov@rlay{#2\cr#3}
      }
    \ifx#1\mathop\expandafter\displaylimits\fi}
\newcommand{\cupdot}{\charfusion[\mathbin]{\cup}{\cdot}}
\newcommand{\bigcupdot}{\charfusion[\mathop]{\bigcup}{\cdot}}
\newcommand\reallywidehat[1]{%
\savestack{\tmpbox}{\stretchto{%
  \scaleto{%
    \scalerel*[\widthof{\ensuremath{#1}}]{\kern-.6pt\bigwedge\kern-.6pt}%
    {\rule[-\textheight/2]{1ex}{\textheight}}
  }{\textheight}%
}{0.5ex}}%
\stackon[1pt]{#1}{\tmpbox}%
}
\title{On powers of countably pracompact groups}
\thanks{\textit{2020 Mathematics Subject Classification.} Primary 54A35, 54H11, 54B10; Secondary 22A05, 54E99, 54G20}
\author{Artur Hideyuki Tomita$^{\dag}$}
\thanks{$^{\dag}$ The first listed author has received financial support from \textit{São Paulo Research Foundation} (FAPESP), grant 2021/00177-4}
\author{Juliane Trianon-Fraga$^{\ddag}$}
\thanks{$^{\ddag}$ The second listed author has received financial support from \textit{São Paulo Research Foundation} (FAPESP), grant 2019/12628-0.}
\newcommand{\Addresses}{
  \bigskip
  
\indent \textsc{Depto de Matemática, Instituto de Matemática e Estatística, Universidade de São Paulo, Rua do Matão, 1010, CEP 05508-090, São Paulo, SP, Brazil.}\par\nopagebreak
  \textit{E-mail addresses:} tomita@ime.usp.br (A. H. Tomita), jtrianon@ime.usp.br (J. Trianon-Fraga).}
\begin{document}
\begin{abstract}
In 1990, Comfort asked: is there, for every cardinal number $\alpha \leq 2^{\mathfrak{c}}$, a topological group $G$ such that $G^\gamma$ is countably compact for all cardinals $\gamma<\alpha$, but $G^\alpha$ is not countably compact? A similar question can also be asked for countably pracompact groups: for which cardinals $\alpha$ is there a topological group $G$ such that $G^{\gamma}$ is countably pracompact for all cardinals $\gamma < \alpha$, but $G^{\alpha}$ is not countably pracompact? In this paper we construct such group in the case $\alpha = \omega$, assuming the existence of $\mathfrak{c}$ incomparable selective ultrafilters, and in the case $\alpha = \kappa^{+}$, with $\omega \leq \kappa \leq 2^{\mathfrak{c}}$, assuming the existence of $2^{\mathfrak{c}}$ incomparable selective ultrafilters. In particular, under the second assumption, there exists a topological group $G$ so that $G^{2^\mathfrak{c}}$ is countably pracompact, but $G^{{(2^{\mathfrak{c}})}^+}$ is not countably pracompact, unlike the countably compact case.  

 \smallskip
   \noindent \textbf{Keywords.} Topological group, pseudocompactness, countable compactness, selective pseudocompactness, countable pracompactness, selective ultrafilter, Comfort’s question.

\end{abstract}

\maketitle

\section{Introduction}

Throughout this paper, every topological space will be Tychonoff (Hausdorff and completely regular) and every topological group will be Hausdorff (thus, also Tychonoff). For an infinite set $X$, $[X]^{<\omega}$ will denote the family of all finite subsets of $X$, and $[X]^{\omega}$ will denote the family of all countable subsets of $X$. Recall that an infinite topological space $X$ is said to be
\begin{itemize}
    \item \textit{pseudocompact} if each continuous real-valued function on $X$ is bounded;
    \item \textit{countably compact} if every infinite subset of $X$ has an accumulation point in $X$;
    \item \textit{countably pracompact} if there exists a dense subset $D$ in $X$ such that every infinite subset of $D$ has an accumulation point in $X$.
\end{itemize}

We denote the set of non-principal (free) ultrafilters on $\omega$ by $\omega^*$. The following notion was introduced by Bernstein \cite{Bernstein}:

\begin{Def}[\cite{Bernstein}]
Let $p \in \omega^*$ and $\{x_n: n \in \omega\}$ be a sequence in a topological space $X$. We say that $x \in X$ is a $p-$\textit{limit point} of $\{x_n:n \in \omega\}$ if $\{n \in \omega: x_n \in U\} \in p$ for every neighborhood $U$ of $x$.
\end{Def}
Notice that if $X$ is a Hausdorff space, for each $p \in \omega^*$ a sequence $\{x_n: n \in \omega\} \subset X$ has at most one $p$-limit point $x$ and we write $x=p-\lim_{n \in \omega}x_n$ in this case.

One may write the compact-like definitions above using the notion of $p-$limits. In fact, it is not hard to show that $x \in X$ is an accumulation point of a sequence $\{x_n: n \in \omega\} \subset X$ if and only if there exists $p \in \omega^*$ such that $x= p-\lim_{n \in \omega}x_n$. Thus, we have that 
\begin{itemize}
\item $X$ is countably compact if and only if every sequence $\{x_n:n \in \omega\} \subset X$ has a $p-$limit, for some $p \in \omega^*$.
    \item $X$ is countably pracompact if and only if there exists a dense subset $D$ in $X$ such that every sequence $\{x_n: n \in \omega\} \subset D$ has a $p-$limit in $X$, for some $p \in \omega^*$.
\end{itemize} 
For pseudocompact spaces, a similar equivalence holds: $X$ is pseudocompact if and only if for every countable family $\{U_n: n \in \omega\}$ of nonempty open sets of $X$, there exists $x \in X$ and $p \in \omega^*$ such that, for each neighborhood $V$ of $x$, $\{n \in \omega: V \cap U_n \neq \emptyset\} \in p$.
 
There are many concepts related to compactness and pseudocompactness which have emerged in the last years. In this paper, we highlight the following, which was introduced in \cite{yasser2}.

\begin{Def}[\cite{yasser2}]
A topological space $X$ is called \textit{selectively pseudocompact}\footnotemark \ if for each sequence $\{U_n: n \in \omega\}$ of nonempty open subsets of $X$ there is a sequence $\{x_n: n \in \omega\} \subset X$, $x \in X$ and $p \in \omega^*$ such that $x=p-\lim_{n \in \omega}x_n$ and, for each $n \in \omega$, $x_n \in U_n$.
\footnotetext{This concept was originally defined under the name \textit{strong pseudocompactness}, but later the name was changed, since there were already two different properties named in the previous way (in \cite{propriedade1} and \cite{propriedade2}).}
\end{Def}

It is clear that every selectively pseudocompact space is pseudocompact and every countably pracompact space is selectively pseudocompact. Also, it was proved in \cite{tomita1} that there exists a pseudocompact topological group which is not selectively pseudocompact, and in \cite{trianontomita2022} that there exists a selectively pseudocompact group which is not countably pracompact.
\vspace{0.5cm}

We shall now briefly recall the definitions and some facts about selective ultrafilters and the Rudin-Keisler order.

\begin{Def}
A \textit{selective ultrafilter} on $\omega$ is a free ultrafilter $p$ on $\omega$ such that for every partition $\{A_n: n \in \omega\}$ of $\omega$, either there exists $n \in \omega$ such that $A_n \in p$ or there exists $B \in p$ such that $|B \cap A_n| =1$ for every $n \in \omega$.
\end{Def}

Given an ultrafilter $p$ on $\omega$ and a function $f: \omega \to \omega$, note that 
\[
f_{*}(p) \doteq \{A \subset \omega: f^{-1}(A) \in p\}
\]
is also an ultrafilter on $\omega$. Consider then the following definition.
\begin{Def}
Given $p, q \in \omega^{*}$, we say that $p \leq_{RK} q$ if there exists a function $f: \omega \to \omega$ so that $f_{*}(q)=p$. Such relation on $\omega^*$ is a preorder called the \textit{Rudin-Keisler order}.  
\end{Def}

We say that $p, q \in \omega^*$ are:
\begin{itemize}
\item \textit{incomparable} if neither $p \leq_{RK} q$ or $q \leq_{RK} p$;
\item \textit{equivalent} if $p \leq_{RK} q$ and $q \leq_{RK} p$.
\end{itemize}

The existence of selective ultrafilters is independent of ZFC. In fact, there exists a model of ZFC in which there are no $P-$points\footnotemark \ in $\omega^*$ \cite{Shelah}, while Martin's axiom (MA) implies the existence of $2^{\mathfrak{c}}$ incomparable selective ultrafilters \cite{Blass}.
\footnotetext{A free ultrafilter $p \in \omega^*$ is a $P$-\textit{point} if, for every sequence $(A_n)_{n \in \omega}$ of elements of $p$, there exists $A \in p$ so that $A \setminus A_n$ is finite for each $n \in \omega$. Every selective ultrafilter is a $P-$point.}
\vspace{0.5cm} 

Pseudocompactness is not preserved under products for arbitrary topological spaces \cite{terasaka}, but interestingly Comfort and Ross proved that the product of any family of pseudocompact topological groups is pseudocompact \cite{comross}. This result motivated Comfort to question whether the product of countably compact groups is also countably compact. More generally, he asked the following question \cite{QuestionComfort}:
\begin{Ques}[\cite{QuestionComfort}, Question 477]\label{comquest}
Is there, for every (not necessarily infinite) cardinal number $\alpha  \leq 2^{\mathfrak{c}}$, a topological group $G$ such that $G^{\gamma}$ is countably compact for all cardinals $\gamma < \alpha$, but $G^{\alpha}$ is
not countably compact?
\end{Ques}

The restriction $\alpha \leq 2^{\mathfrak{c}}$ in the question above is due to the following result:

\begin{Teo}[\cite{ginsa}, Theorem 2.6]
Let $X$ be a Hausdorff topological space. The following statements are equivalent:
\begin{enumerate}[(i)]
\item every power of $X$ is countably compact;
\item $X^{2^\mathfrak{c}}$ is countably compact;
\item $X^{|X|^{\omega}}$ is countably compact;
\item there exists $p \in \omega^*$ such that $X$ is $p-$compact\footnotemark.
\end{enumerate}
\end{Teo}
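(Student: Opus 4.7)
The plan is to prove the cycle (iv) $\Rightarrow$ (i) $\Rightarrow$ (ii), (iii) $\Rightarrow$ (iv), since the implications (i) $\Rightarrow$ (ii) and (i) $\Rightarrow$ (iii) are immediate. For (iv) $\Rightarrow$ (i) I will use the standard observation that $p$-limits in a product space are computed coordinatewise: given any cardinal $\kappa$, any sequence $(x_n)_{n\in\omega}$ in $X^\kappa$, and a $p \in \omega^*$ witnessing (iv), the point $x \in X^\kappa$ defined by $x(\alpha) := p-\lim_{n \in \omega} x_n(\alpha)$ is a $p$-limit of $(x_n)$ by the definition of the product topology. Hence every power of $X$ is $p$-compact, and in particular countably compact.

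The heart of the argument lies in the two implications toward (iv). For (iii) $\Rightarrow$ (iv), I would enumerate all sequences $\omega \to X$ as $\{s_\alpha : \alpha < |X|^\omega\}$ and consider the \emph{universal} sequence $(y_n)_{n\in\omega}$ in $X^{|X|^\omega}$ defined by $y_n(\alpha) := s_\alpha(n)$. Countable compactness of $X^{|X|^\omega}$ produces some $p \in \omega^*$ and a $p$-limit $y$ of $(y_n)$; the coordinatewise description of $p$-limits then shows that $y(\alpha)$ is a $p$-limit of $s_\alpha$ for every $\alpha$, so this $p$ witnesses (iv).

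For (ii) $\Rightarrow$ (iv) the idea is a diagonal twist. Assuming (iv) fails, the axiom of choice provides, for each $p \in \omega^*$, a sequence $s_p : \omega \to X$ with no $p$-limit. Since $|\omega^*| = 2^{\mathfrak{c}}$, I can repeat the previous construction in the product $X^{\omega^*}$, which is homeomorphic to $X^{2^{\mathfrak{c}}}$: define $(y_n)_{n\in\omega}$ by $y_n(p) := s_p(n)$, apply countable compactness to obtain some $q \in \omega^*$ and a $q$-limit $y$ of $(y_n)$, and observe that the $q$-th coordinate $y(q)$ would then be a $q$-limit of $s_q$, contradicting the choice of $s_q$. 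The main subtlety, and the step I expect to require most care, is precisely this reindexing: the product must be indexed by $\omega^*$ itself, rather than by an abstract cardinal of the same size, so that the diagonal comparison between the ``outer'' $q$ witnessing countable compactness and the ``inner'' $q$ labelling the bad sequence $s_q$ is meaningful.
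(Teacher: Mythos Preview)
The paper does not prove this theorem; it is quoted from \cite{ginsa} as background and no argument is supplied. So there is nothing to compare against in the paper itself.

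Your proof is correct and is essentially the classical Ginsburg--Saks argument. The coordinatewise computation of $p$-limits gives (iv) $\Rightarrow$ (i), and your ``universal sequence'' for (iii) $\Rightarrow$ (iv) is exactly the standard move: packing every $\omega$-sequence of $X$ into a single sequence in $X^{|X|^\omega}$ forces any accumulation point to hand you a single ultrafilter $p$ that works for all sequences at once. Your (ii) $\Rightarrow$ (iv) by contraposition, indexing the product by $\omega^*$ itself and reading off the diagonal coordinate, is likewise the usual trick; the reindexing you flag as ``the main subtlety'' is indeed the whole point of that step, and you have it right. One minor remark: in (iii) $\Rightarrow$ (iv) you should note that countable compactness of $X^{|X|^\omega}$ forces $X$ to be infinite (else there is nothing to prove) so that $|X|^\omega$ really is the number of sequences; and in both (ii) $\Rightarrow$ (iv) and (iii) $\Rightarrow$ (iv) you are silently using that an accumulation point of a sequence is a $p$-limit for some $p \in \omega^*$, which the paper records explicitly just before the statement.
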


\footnotetext{Given $p \in \omega^*$, a topological space $X$ is \textit{p-compact} if every sequence of points in $X$ has a $p-$limit. The product of $p-$compact spaces is $p-$compact, for every $p \in \omega^*$.}

Van Douwen was the first to prove consistently (under MA) that there are two countably compact groups whose product is not countably compact \cite{Douwen}. Also, Question \ref{comquest} was answered positively in \cite{tomita34}, assuming the existence of $2^{\mathfrak{c}}$ selective ultrafilters and that $2^{\mathfrak{c}}=2^{<2^{\mathfrak{c}}}$. Finally, in 2021, it was proved in ZFC that there are two countably compact groups whose product is not countably compact \cite{Michael}.

It is natural also to ask productivity questions for countably pracompact and selectively pseudocompact groups. In this regard, Garcia-Ferreira and Tomita proved that if $p$ and $q$ are non-equivalent (according to the Rudin-Keisler order in $\omega^*$) selective ultrafilters on $\omega$, then there are a $p$-compact group and a $q$-compact group whose product is not selectively pseudocompact \cite{tomita2}. Also, Bardyla, Ravsky and Zdomskyy constructed, under MA, a Boolean countably compact topological
group whose square is not countably pracompact \cite{Bardyla}. However, the following questions remain unsolved in ZFC.
\begin{Ques}[ZFC]
Is it true that selective pseudocompactness is non-productive in the class of topological groups?
\end{Ques}
\begin{Ques}[ZFC]
Is it true that countable pracompactness is non-productive in the class of topological groups?
\end{Ques}

More generally, one can ask Comfort-like questions, such as Question \ref{comquest}, for selectively pseudocompact and countably pracompact groups. In the case of selectively pseudocompact groups, the question is restricted to cardinals $\alpha \leq \omega$, due to the next result. 
\begin{Lema}
If $G$ is a topological group such that $G^{\omega}$ is selectively pseudocompact, then $G^{\kappa}$ is selectively pseudocompact for every cardinal $\kappa \geq \omega$.
\end{Lema}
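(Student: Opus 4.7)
The plan is to exploit the fact that basic open sets in a product depend on only finitely many coordinates, so the whole problem can be pulled back to countably many coordinates, where selective pseudocompactness is available by hypothesis.

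First, I would let $\{U_n : n \in \omega\}$ be any countable family of nonempty open subsets of $G^\kappa$, and shrink each $U_n$ to a standard basic open set (shrinking only makes the selection task harder, so this is harmless): for each $n$, fix a finite $F_n \subset \kappa$ and a nonempty open $V_n \subset G^{F_n}$ with $U_n \supset \pi_{F_n}^{-1}(V_n)$, where $\pi_{F_n}: G^\kappa \to G^{F_n}$ is the projection. Set $F = \bigcup_{n \in \omega} F_n$; this is at most countable, and I may enlarge it inside $\kappa$ so that $|F| = \omega$. Via any bijection $F \to \omega$, $G^F$ is homeomorphic to $G^\omega$ and is therefore selectively pseudocompact.

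Second, I would apply the selective pseudocompactness of $G^F$ to the sequence of nonempty open sets $\pi_F\bigl(\pi_{F_n}^{-1}(V_n)\bigr) \subset G^F$ to obtain points $y_n$ in these open sets, a point $y \in G^F$, and a free ultrafilter $p \in \omega^*$ with $y = p\text{-}\lim_{n \in \omega} y_n$ in $G^F$. Now I would lift the construction to $G^\kappa$: letting $e$ denote the identity of $G$, define $x_n \in G^\kappa$ by $x_n \restriction F = y_n$ and $x_n(\alpha) = e$ for $\alpha \in \kappa \setminus F$, and define $x \in G^\kappa$ analogously with $x \restriction F = y$. Since $U_n$ depends only on coordinates in $F_n \subset F$, we get $x_n \in U_n$ for each $n$.

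Finally, I would check that $x = p\text{-}\lim_{n \in \omega} x_n$ in $G^\kappa$. Basic neighborhoods of $x$ in $G^\kappa$ have the form $\pi_H^{-1}(W)$ with $H \subset \kappa$ finite, so it suffices to verify $\pi_H(x) = p\text{-}\lim \pi_H(x_n)$ in $G^H$ for every such $H$. For coordinates in $H \setminus F$ both sides are constantly $e$, and for coordinates in $H \cap F$ this is immediate from $y = p\text{-}\lim y_n$ in $G^F$ together with the fact that $p$-limits respect projections to finite subproducts.

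I do not anticipate a real obstacle: the argument is the standard finite-support reduction for product spaces, and the only subtlety is making sure the $p$-limit passes cleanly between $G^F$ and $G^\kappa$, which is handled by the coordinatewise character of convergence in the product topology. The group structure is used only to provide a canonical basepoint $e$ with which to extend the coordinates outside $F$.
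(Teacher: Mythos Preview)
Your proof is correct and follows essentially the same route as the paper: reduce each $U_n$ to a basic open set supported on a finite $F_n$, collect the supports into a countable $F$, apply the hypothesis in $G^F\cong G^\omega$, and extend the resulting selection and its $p$-limit trivially on the coordinates outside $F$. The only cosmetic differences are that you enlarge $F$ to have exactly $\omega$ coordinates and use the identity $e$ (rather than an arbitrary $g\in G$) for the padding, and you spell out the $p$-limit verification in the product a bit more explicitly.
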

\begin{proof}
Indeed, let $\kappa \geq \omega$ and $(U_n)_{n \in \omega}$ be a family of open subsets of $G^{\kappa}$. For every $n \in \omega$, there are open subsets $U_n^j \subset G$, for each $j< \kappa$, so that $\prod_{j \in \kappa} U_n^j \subset U_n$ and $U_n^j \neq G$ if and only if $j \in F_n$, for a finite subset $F_n \subset \kappa$. Let $F \doteq \bigcup_{n \in \omega}F_n$. For each $n \in \omega$, consider the open subsets $V_n \doteq \prod_{j \in F_n} U_n^j \times \prod_{j \in F \setminus F_n} G \subset G^{F}$. By assumption, $G^F$ is selectively pseudocompact, thus there is a sequence $\{y_n: n \in \omega\} \subset G^F$ so that $y_n \in V_n$, for every $n \in \omega$, which has an accumulation point $y$ in $G^F$. Then, given $g \in G$ arbitrarily,
the sequence $\{x_n: n \in \omega\} \subset G^{\kappa}$ defined coordinatewise, for each $n \in \omega$, by
\[
    x_n^j \doteq
    \begin{cases}
    y_n^j, & \text{if} \ j \in F\\
    g, & \text{if } j \in \kappa \setminus F
\end{cases}
\]
is such that $x_n \in U_n$ for every $n \in \omega$, and has $x \in G^{\kappa}$ given by
\[
    x^j \doteq
    \begin{cases}
    y^j, & \text{if} \ j \in F\\
    g, & \text{if } j \in \kappa \setminus F
\end{cases}
\]
as accumulation point. 
\end{proof}

\begin{Ques}\label{comsel}
For which cardinals $\alpha \leq \omega$ is there a topological group $G$ such that $G^{\gamma}$ is selectively pseudocompact for all cardinals $\gamma < \alpha$, but $G^{\alpha}$ is not selectively pseudocompact?
\end{Ques}

In the case of countably pracompact groups, it is still not known whether there exists a cardinal $\kappa$ satisfying that: if a topological group $G$ is such that $G^\kappa$ countably pracompact, then $G^\alpha$ is countably pracompact, for each $\alpha> \kappa$. Thus, there is no restriction to the cardinals $\alpha$ yet:

\begin{Ques}\label{compra}
For which cardinals $\alpha$ is there a topological group $G$ such that $G^{\gamma}$ is countably pracompact for all cardinals $\gamma < \alpha$, but $G^{\alpha}$ is not countably pracompact?
\end{Ques}
It is worth observing that if $G^{\omega}$ is \textbf{countably compact} and $\kappa \geq \omega$, then 
\[
\Sigma \doteq \{g \in G^{\kappa}: |\{\alpha \in \kappa: g^{\alpha} \neq 0\}| \leq \omega \}
\]
is a dense subset of $G^{\kappa}$ for which every infinite subset has an accumulation point. Thus, in this case $G^{\kappa}$ is countably pracompact.

In \cite{finpow}, under the assumption of CH, the authors showed that for every positive integer $k>0$, there exists a topological group $G$ for which $G^k$ is countably compact but $G^{k+1}$ is not selectively pseudocompact. Thus, Question \ref{comsel} and Question \ref{compra} are already solved for finite cardinals under CH. The cardinal $\alpha = \omega$ is the only one for which there are still no consistent answers to the Question \ref{comsel}. 

In this paper:
\begin{itemize}
    \item assuming the existence of $\mathfrak{c}$ incomparable selective ultrafilters, we answer Question \ref{compra} for $\alpha = \omega$;
    \item assuming the existence of $2^\mathfrak{c}$ incomparable selective ultrafilters, we answer Question \ref{compra} for each successor cardinal $\alpha = \kappa^+$, with $\omega \leq \kappa \leq 2^\mathfrak{c}$.
\end{itemize}
 
We will be dealing with Boolean groups, which are also vector spaces over the field $2=\{0,1\}$, and thus we can talk about general linear algebra concepts concerning these groups, such as \textit{linearly independent subsets}. More specifically, if $D \subset 2^{\mathfrak{c}}$ is an infinite set, we will consider $[D]^{<\omega}$ as a Boolean group, with the symmetric difference $\triangle$ as the group operation and $\emptyset$ as the neutral element.
 
 Given $p \in \omega^*$, one may define an equivalence relation on $([D]^{<\omega})^{\omega}$ by letting $f \equiv_p g$ iff $\{n \in \omega: f(n)=g(n)\} \in p$. We let $[f]_p$ be the equivalence class determined by $f$ and $([D]^{<\omega})^{\omega}/p$ be $([D]^{<\omega})^{\omega}/\equiv_p$. Notice that this set has a natural vector space structure (over the field $2$). For each $D_0 \in [D]^{<\omega}$, the constant function in $([D]^{<\omega})^{\omega}$ which takes only the value $D_0$ will be denoted by $\vec{D_0}$.

 \section{Auxiliary results}
 In this section we present the auxiliary results that we will use in the constructions. We start with a simple linear algebra lemma, stated and proved in \cite{trianontomita2022}.
 
  \begin{Lema}[\cite{trianontomita2022}]\label{novo}
    Let $A$, $B$ and $C$ be subsets in a Boolean group. Suppose that $A$ is a finite set and that $A \cup C$, $B \cup C$ are linearly independent. Then there exists $B' \subset B$ such that $|B'| \leq |A|$ and $A \cup C \cup (B \setminus B')$ is linearly independent.
    \end{Lema}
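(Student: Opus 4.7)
The plan is to prove the lemma by induction on $n = |A|$. The base case $n=0$ is immediate: taking $B' = \emptyset$ works, since $A \cup C \cup B = C \cup B$ is linearly independent by hypothesis.

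For the inductive step, suppose the result holds whenever $A$ has cardinality $n$, and let $|A| = n+1$. Pick any $a \in A$ and set $A_0 := A \setminus \{a\}$. Since $A_0 \cup C \subseteq A \cup C$ is still linearly independent, the inductive hypothesis applied to $A_0$, $B$, $C$ yields $B_0' \subseteq B$ with $|B_0'| \leq n$ such that
\[
T := A_0 \cup C \cup (B \setminus B_0')
\]
is linearly independent. If $T \cup \{a\}$ is linearly independent, we are done by taking $B' := B_0'$. Otherwise, $a$ lies in $\espger(T)$, so there is a finite $S \subseteq T$ with $a = \sum_{x \in S} x$ (working in the Boolean group, i.e.\ over $\mathbb{F}_2$).

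The key observation is that $S$ must intersect $B \setminus B_0'$. Indeed, if $S \subseteq A_0 \cup C$, then $a + \sum_{x \in S} x = 0$ would be a nontrivial linear relation inside $A \cup C$, contradicting that $A \cup C$ is linearly independent. So pick any $b \in S \cap (B \setminus B_0')$ and set $B' := B_0' \cup \{b\}$, which satisfies $|B'| \leq n+1 = |A|$.

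It remains to verify that $T' := (T \setminus \{b\}) \cup \{a\} = A \cup C \cup (B \setminus B')$ is linearly independent. Suppose $\sum_{x \in R} x = 0$ for some finite nonempty $R \subseteq T'$. If $a \notin R$, then $R \subseteq T \setminus \{b\} \subseteq T$, contradicting independence of $T$. If $a \in R$, use the relation $a = \sum_{x \in S} x$ to substitute, obtaining $\sum_{x \in S \triangle (R \setminus \{a\})} x = 0$, a finite sum over a subset of $T$. Since $b \in S$ but $b \notin R \setminus \{a\}$, we have $b \in S \triangle (R \setminus \{a\})$, so this sum is nontrivial, again contradicting independence of $T$. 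Hence $T'$ is linearly independent.

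There is no real obstacle here: the argument is a standard Steinitz-style exchange, and the Boolean setting actually makes the linear algebra easier (no signs, no scalars). The only point that needs care is ensuring that the element $b$ exchanged for $a$ comes from $B \setminus B_0'$ rather than from $A_0 \cup C$, which is precisely where the hypothesis that $A \cup C$ (and not merely $A_0 \cup C$) is linearly independent gets used.
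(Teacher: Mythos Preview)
Your argument is the standard Steinitz exchange, and the paper itself does not prove this lemma (it is merely quoted from \cite{trianontomita2022}), so there is no paper proof to compare against.

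There is, however, one small gap. After establishing that $S \not\subseteq A_0 \cup C$ you write ``pick any $b \in S \cap (B\setminus B_0')$'' and then assert $T' := (T \setminus \{b\}) \cup \{a\} = A \cup C \cup (B \setminus B')$. Since the lemma does not assume $A$, $B$, $C$ pairwise disjoint, the element $b$ you pick might also lie in $A_0 \cup C$; in that case removing $b$ from $B$ does not remove it from the union, and the asserted equality fails (for a concrete instance take $C=\{c\}$, $B=\{c,d\}$, $A=\{c \triangle d\}$: your procedure allows $b=c$, giving $B'=\{c\}$ and $A\cup C\cup(B\setminus B')=\{c \triangle d,c,d\}$, which is dependent). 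The repair is immediate and already implicit in your own key observation: what you actually proved is $S\setminus(A_0\cup C)\neq\emptyset$, so choose $b\in S\setminus(A_0\cup C)$. Such $b$ automatically lies in $B\setminus B_0'$, the equality $T'=A\cup C\cup(B\setminus B')$ then holds, and your verification that $T'$ is linearly independent goes through verbatim.
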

 
 The next two technical lemmas will also be useful.
 
 \begin{Lema}\label{caso2teo2}
Let $X$ be an infinite set and $\{X_0,...,X_n\}$ be a partition of $X$. Let also $(x_k)_{k \in \omega}$ and $(y_k)_{k \in \omega}$ be sequences in the Boolean group $[X]^{<\omega}$ so that:
\begin{itemize}
    \item $\{x_k: k \in \omega\} \cup \{y_k: k \in \omega\}$ is linearly independent;
    \item for every $p \in \{0,...,n\}$, both $\{x_k \cap X_p: k \in \omega\}$ and $\{y_k \cap X_p: k \in \omega\}$ are linearly independent.
\end{itemize}

Then, there exist a subsequence $\{k_m: m \in \omega\}$ and $n_0 \in \{0,...,n\}$ so that 
\[\{x_{k_m} \cap X_{n_0}: m \in \omega\} \cup \{y_{k_m} \cap X_{n_0}: m \in \omega\}\]
is linearly independent.
\end{Lema}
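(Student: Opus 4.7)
The plan is to proceed by induction on $n$. The base case $n = 0$ is immediate: the partition consists of the single block $X_0 = X$, so $x_k \cap X_0 = x_k$ and $y_k \cap X_0 = y_k$, and the first hypothesis provides the conclusion with $K = \omega$ and $n_0 = 0$.

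For the inductive step, I would split on the behavior at the block $X_n$. If there already exists an infinite $K \subseteq \omega$ with $\{x_k \cap X_n : k \in K\} \cup \{y_k \cap X_n : k \in K\}$ linearly independent in $[X_n]^{<\omega}$, we are done with $n_0 = n$. Otherwise, by iteratively extracting non-trivial relations on progressively thinner infinite subsets of $\omega$, I produce pairwise disjoint finite non-empty sets $(A_i, B_i)_{i \in \omega}$ with supports $F_i := A_i \cup B_i$ pairwise disjoint, such that $z_i := \sum_{k \in A_i} x_k + \sum_{k \in B_i} y_k$ satisfies $z_i \cap X_n = \emptyset$. The disjointness of supports together with the global linear independence of $\{x_k, y_k\}$ makes $\{z_i : i \in \omega\}$ linearly independent in $[X]^{<\omega}$, and since each $z_i$ avoids $X_n$ they all live in $[Y]^{<\omega}$ where $Y := X_0 \cup \cdots \cup X_{n-1}$.

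The reduction step is to apply the inductive hypothesis to the partition $\{X_0, \ldots, X_{n-1}\}$ of $Y$. For each $i$, I pick a distinguished $k_i \in A_i$ and set $x'_i := z_i$ and $y'_i := y_{k_i} \cap Y$. A direct computation using the disjointness of the $F_i$'s and the global linear independence of $\{x_k, y_k\}$ confirms that $\{x'_i, y'_i : i \in \omega\}$ is linearly independent in $[Y]^{<\omega}$ (one expands $z_i$ in the basis $\{x_k, y_k\}$ and observes that the coefficient of $x_{k_j}$ in any candidate dependency forces $\lambda_j = 0$, whence $\mu_j = 0$), and that $\{y'_i \cap X_p : i\} = \{b_{k_i}^p : i\}$ remains linearly independent for each $p < n$ as a subsequence of a linearly independent family. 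The delicate remaining hypothesis is the linear independence of $\{x'_i \cap X_p : i\} = \{z_i \cap X_p : i\}$ for each $p < n$, which need not hold a priori, since the obstructions are controlled by the subspaces $W \cap [X \setminus (X_n \cup X_p)]^{<\omega}$ of $W := \mathrm{span}\{x_k, y_k\}$. To repair this, I would invoke Lemma \ref{novo} iteratively: for each $p$ in turn, use it to prune a controlled number of $z_i$'s that create dependencies at $p$, combined with a pigeonhole-style diagonalization over the $n$ values of $p$ to arrange all projections to be simultaneously linearly independent on an infinite further subsequence. Once all hypotheses are verified for the reduced problem, the inductive hypothesis delivers some $n_0 \in \{0, \ldots, n-1\}$ and an infinite subsequence witnessing the conclusion on $Y$, which translates back to the required subsequence of the original indices. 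The main technical obstacle is precisely this pruning step, which must be executed so that infinitely many $z_i$'s survive across the $n$ simultaneous constraints.
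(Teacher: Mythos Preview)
Your inductive scheme has two genuine gaps. First, the hypothesis you need for the inductive call --- that $\{z_i \cap X_p : i \in \omega\}$ is linearly independent for each $p < n$ --- is precisely the kind of statement the lemma is trying to establish, and your proposed repair via Lemma~\ref{novo} does not work as stated: that lemma trades a \emph{finite} set $A$ against a deletion of at most $|A|$ elements from $B$, but the obstruction space $W \cap [X \setminus (X_n \cup X_p)]^{<\omega}$ you identify can be infinite-dimensional, so there is no a~priori bound on how many $z_i$'s you must discard, and no pigeonhole over the $n$ values of $p$ recovers an infinite surviving set. Second, even if the induction delivered some $n_0 < n$ and a subsequence $(i_m)$ with $\{z_{i_m} \cap X_{n_0}\} \cup \{y_{k_{i_m}} \cap X_{n_0}\}$ linearly independent, you still owe an honest subsequence of \emph{indices} $k'_m$ with $\{x_{k'_m} \cap X_{n_0}\} \cup \{y_{k'_m} \cap X_{n_0}\}$ linearly independent; since $z_{i_m}$ is a genuine combination over the possibly large support $F_{i_m}$, the element $x_{k_{i_m}} \cap X_{n_0}$ need not lie in the span of your new independent family, and the ``translates back'' step is not automatic.

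The paper avoids both issues with a direct, non-inductive argument. For each block $X_j$ it greedily builds a maximal set $A_j \subseteq \omega$ with $\{x_k \cap X_j, y_k \cap X_j : k \in A_j\}$ linearly independent. If some $A_j$ is infinite we are done; otherwise, maximality together with the \emph{individual} linear independence of $\{x_k \cap X_j\}$ and of $\{y_k \cap X_j\}$ forces, for every large $\tilde k$, a relation of the specific shape $x_{\tilde k} \cap X_j = (y_{\tilde k} \cap X_j) \triangle c_j(\tilde k)$ with $c_j(\tilde k)$ ranging over a fixed finite set $\mathcal{C}_j$. Pigeonholing the tuple $(c_0,\ldots,c_n)$ to be constant on an infinite set of $\tilde k$'s yields $x_{\tilde k} = y_{\tilde k} \triangle c$, contradicting the global linear independence of $\{x_k\} \cup \{y_k\}$. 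The key structural fact --- that the dependencies at each block are automatically ``one $x$ against one $y$ modulo a constant'' --- is exactly what your passage to arbitrary combinations $z_i$ loses.
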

\begin{proof}
We shall construct inductively a sequence $(A_0^i)_{i \in \omega}$ of subsets of $\omega$ as follows.  Firstly, if does not exist $k \in \omega$ so that $\{x_k \cap X_0\} \cup \{y_k \cap X_0\}$ is linearly independent, we put $A_0^0 = \emptyset$. Otherwise, we choose the minimum $k_0 \in \omega$ with this property and put $A_0^0 \doteq \{k_0\}$. Suppose that for $l \in \omega$ we have constructed $A_0^0,...,A_0^l \subset \omega$ such that:
\begin{enumerate}[i)]
    \item $|A_0^i| \leq i+1$, for each $i=0,...,l$;
    \item $A_0^i \subset A_0^j$ if $0\leq i \leq j \leq l$;
    \item $\{x_k \cap X_0: k \in A_0^l\} \cup \{y_k \cap X_0: k \in A_0^l\}$ is linearly independent.
    \item for each $0 \leq i < l$, $A_0^{i+1} \setminus A_0^{i} = \emptyset$ if, and only if, 
    \[\{x_k \cap X_0: k \in A_0^i\} \cup \{y_k \cap X_0: k \in A_0^i\} \cup \{x_{\tilde{k}} \cap X_{0}\} \cup \{y_{\tilde{k}} \cap X_{0}\}\]
    is linearly dependent for every $\tilde{k} > \text{max}(A_0^i)$.
\end{enumerate}
In what follows, we will construct $A_0^{l+1}$. If does not exist $\tilde{k} \in \omega$, $\tilde{k} > \text{max}(A_0^l)$, so that 
\[
\{x_k \cap X_0: k \in A_0^l\} \cup \{y_k \cap X_0: k \in A_0^l\} \cup \{x_{\tilde{k}} \cap X_{0}\} \cup \{y_{\tilde{k}} \cap X_{0}\}
\]
is linearly independent, we put $A_0^{l+1} = A_0^l$. Otherwise, we choose the minimum $k_{l+1} \in \omega$ with this property, and put $A_0^{l+1}= A_0^l \cup \{k_{l+1}\}$. In any case, $A_0^0,...,A_0^{l+1}$ satisfy items i)--iv), and then, by induction, there exists a sequence $(A_0^i)_{i \in \omega}$ satisfying them. Now, let $A_0 \doteq \bigcup_{i \in \omega} A_0^i$. If $A_0$ is infinite, then $\{x_k \cap X_0: k \in A_0\} \cup \{x_k \cap X_0: k \in A_0\}$ is linearly independent, and we are done. 

On the other hand, suppose that $A_0$ is finite. We may repeat the process above for $X_1,...,X_n$, constructing analogous subsets $A_1,...,A_n \subset \omega$. If either of them is infinite, we are done.

Suppose then that $A_0,...,A_n$ are finite sets. By construction, for each $\tilde{k} > \text{max}(A_0 \cup... \cup A_n)$ and $j=0,...,n$,
\[
\{x_k \cap X_j: k \in A_j\} \cup \{y_k \cap X_j: k \in A_j\} \cup \{x_{\tilde{k}} \cap X_j\} \cup \{y_{\tilde{k}} \cap X_j\}
\]
is linearly dependent. Also, since, for every $j=0,...,n$, 
\[\mathcal{C}_j \doteq \text{span}(\{x_k \cap X_j: k \in A_j\} \cup \{y_k \cap X_j: k \in A_j\})\]
is finite  and both $\{x_k \cap X_j: k \in \omega\}$ and $\{y_k \cap X_j: k \in \omega\}$ are linearly independent, we can fix:
\begin{itemize}
    \item an infinite subset $A \subset \omega$;
    \item $c_j \in \mathcal{C}_j$, for each $j=0,...,n$,
\end{itemize}
so that
\[
x_{\tilde{k}} \cap X_j = (y_{\tilde{k}} \cap X_j) \triangle c_j,
\]
for every $\tilde{k} \in A$ and $j =0,...,n$. Thus,
\[
x_{\tilde{k}} = (x_{\tilde{k}} \cap X_0) \triangle ... \triangle (x_{\tilde{k}} \cap X_n) = (y_{\tilde{k}} \cap X_0) \triangle ... \triangle (y_{\tilde{k}} \cap X_n) \triangle (c_0 \triangle ... \triangle c_n)= y_{\tilde{k}} \triangle (c_0 \triangle ... \triangle c_n),
\]
for every $\tilde{k} \in A$, which is a contradiction, as $\{x_k: k \in \omega\} \cup \{y_k: k \in \omega\}$ is linearly independent. Hence, $A_0,...,A_n$ cannot all be finite.
\end{proof}

 \begin{Lema}\label{bastali}
  Let $X$ be an infinite set, $k >0$ and $\{(x^0_n,...,x^{k-1}_n): n \in \omega\} \subset ([X]^{<\omega})^{k}$ be a sequence. Then, there are:
  \begin{itemize}
      \item elements $d_0,...,d_{k-1} \in [X]^{<\omega}$;
      \item a subsequence $\{(x^0_{n_l},...,x^{k-1}_{n_l}) : l \in \omega\}$;
      \item for some\footnotemark \ $0 \leq t \leq k$, a sequence $\{(y^0_{n_l},...,y_{n_l}^{t-1}): l \in \omega\} \subset ([X]^{<\omega})^t$;
      \item for each $0 \leq s<k$, a function $P_s: t \to 2$,
  \end{itemize} \footnotetext{If $t =0$, we understand that there is no such sequence and item i) becomes: $x_{n_l}^s = d_s$, for every $l \in \omega$ and $0 \leq s <k$.}
satisfying that
  \begin{enumerate}[i)]
      \item $x_{n_l}^s = \Big( \displaystyle \sum_{i=0}^{t-1}P_s(i)y_{n_l}^i \Big) \triangle d_s,$
      for every $l \in \omega$ and $0 \leq s<k$;
      \item $\{y_{n_l}^i: l \in \omega, 0 \leq i<t\}$ is linearly independent.
  \end{enumerate}
 \end{Lema}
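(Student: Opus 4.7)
I would prove this by induction on $k$.

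For the base case $k=1$, the sequence $\{x_n^0\}$ either realizes some value $d_0 \in [X]^{<\omega}$ infinitely often (then I pass to the constant subsequence and take $t=0$), or it attains infinitely many distinct values. In the latter case its $\mathbb{F}_2$-span is infinite-dimensional, since any finite-dimensional $\mathbb{F}_2$-space has only finitely many elements, and a greedy extraction produces an infinite linearly independent subsequence; take it as $y_{n_l}^0$ with $t=1$, $P_0(0)=1$, $d_0 = \emptyset$.

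For the inductive step, I apply the hypothesis to the first $k-1$ coordinates, obtaining a subsequence $\{n_l\}$, constants $d_0,\dots,d_{k-2}$, an integer $t_0 \le k-1$, a family $\{y_{n_l}^i\}_{l,\,i<t_0}$ and functions $P_0,\dots,P_{k-2}$ realizing the conclusion for $k-1$. Put $V=[X]^{<\omega}$, $W=\text{span}\{y_{n_l}^i\}$, $z_l = x_{n_l}^{k-1}$, and apply the base case to $\{\bar z_l\}\subset V/W$. If $\{\bar z_l\}$ admits a linearly independent subsequence in $V/W$, I immediately finish by setting $y_{n_l}^{t_0} := z_l$, $P_{k-1}(t_0)=1$, $P_{k-1}(i)=0$ for $i<t_0$, $d_{k-1}=\emptyset$ and $P_s(t_0)=0$ for $s<k-1$, since independence in $V/W$ lifts to independence of the enlarged system in $V$. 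Otherwise there is a subsequence on which $z_l - d'_{k-1} \in W$ for a fixed representative $d'_{k-1}$; writing $z_l - d'_{k-1} = \sum_{(l',i)\in S_l} y_{n_{l'}}^i$ with $S_l$ finite, I split $S_l = (\{l\}\times J_l)\cup S_l^{\neq l}$ and pigeonhole $J_l$ to a common $J$ (which fixes $P_{k-1}$ on indices $i<t_0$). Then I apply the $\Delta$-system lemma to $\{S_l^{\neq l}\}$—valid even without a uniform size bound, by iteratively peeling off elements that lie in infinitely many $S_l^{\neq l}$'s until the remainders have minima tending to infinity—to extract a subsequence with a fixed root $R$ and pairwise disjoint petals $S_l^{\neq l}\setminus R$. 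The root contribution $r_R := \sum_{(l',i)\in R} y_{n_{l'}}^i$ is a single vector, to be absorbed into $d_{k-1} := d'_{k-1} + r_R$; if the petals are eventually empty, this already finishes the case with $t=t_0$.

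The main obstacle is to handle the petal contributions $r_l := \sum_{(l',i)\in S_l^{\neq l}\setminus R} y_{n_{l'}}^i$, which depend on $l$ and may involve unboundedly many summands, so they cannot be absorbed term by term into the basis. The decisive point is that each $r_l$ is nonetheless a single vector in $V$, so it can serve as the new basis vector $y_{n_l}^{t_0}:=r_l$ provided I restrict to a further subsequence $N'$ along which, for every $l\in N'$, the $l'$-indices appearing in the petal of $l$ lie outside $N'$. Such $N'$ is constructed greedily: pairwise disjointness of the petals implies that any fixed index $l_0$ belongs to the $l'$-projection of at most $t_0$ petals (one per choice of $i<t_0$), so at each step only finitely many indices are forbidden by the forward constraint (avoiding indices that already appear in an earlier petal) and the backward constraint (avoiding indices whose petals contain a previously chosen $l_i$). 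With $y_{n_l}^{t_0}:=r_l$, $P_{k-1}(t_0):=1$ and $P_s(t_0):=0$ for $s<k-1$, item~(i) holds immediately, and linear independence of the enlarged system is verified by expanding a putative relation in the ambient linearly independent family $\{y_{n_l}^i\}$: the old vectors are indexed by $N'$, while each $r_l$ is supported on indices outside $N'$ that are pairwise disjoint from the supports of the other $r_{l'}$'s, which forces all coefficients of the relation to vanish.
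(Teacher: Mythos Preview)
Your inductive approach is natural, and the base case and Case~1 of the inductive step are fine, but there is a genuine gap in Case~2: the $\Delta$-system extraction does not go through. A $\Delta$-system refinement of a countable family of finite sets need not exist when the sizes are unbounded, and the ``peeling'' procedure you describe can fail to terminate. Concretely, take $X=\omega$, $k=2$, $x_n^0=\{n\}$ and $x_n^1=\{0,1,\dots,n\}$. The induction on the first coordinate yields $t_0=1$, $n_l=l$, $y_l^0=\{l\}$, and then $W=\text{span}\{\{l\}:l\in\omega\}=[X]^{<\omega}=V$, so $V/W=0$ and you are forced into Case~2 with $d'_{1}=\emptyset$. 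One computes $S_l=\{(l',0):l'\le l\}$, $J=\{0\}$, and $S_l^{\neq l}=\{(l',0):l'<l\}$. These sets are nested, so no infinite subfamily is a $\Delta$-system with finite root; your peeling loop strips off $(0,0),(1,0),(2,0),\dots$ indefinitely without ever reaching remainders whose minima tend to infinity, and an infinite root $R$ would make $r_R$ an undefined infinite sum. (The lemma is still true for this sequence---for instance the odd-indexed subsequence with $y_l^1:=\{0,\dots,l-1\}$ works---but your argument does not produce it, and repairing the inductive step seems to require substantially more than a $\Delta$-system.)

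The paper's proof takes a different route that sidesteps this difficulty. It fixes a free ultrafilter $q$ on $\omega$ and passes to the ultrapower $([X]^{<\omega})^\omega/q$; inside the finite-dimensional space $\text{span}\{[x^0]_q,\dots,[x^{k-1}]_q\}$ it isolates the subspace of ``constant'' classes $[\vec{c}]_q$, completes a basis of that subspace to a basis $\{[\vec{c^0}]_q,\dots,[\vec{c^{j-1}}]_q,[y^0]_q,\dots,[y^{t-1}]_q\}$, and reads off the coefficients $P_s$ and constants $d_s$ on a single set in $q$. The linear independence of $\{y^i_{n_l}:l\in\omega,\,i<t\}$ is then obtained by a further greedy thinning inside $q$. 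Handling all $k$ coordinates simultaneously in the ultrapower is exactly what lets the paper bypass the off-diagonal bookkeeping that derails the inductive approach.
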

 \begin{proof}
Fix $q \in \omega^*$, and let
\[
\mathcal{M} \doteq \Big\{c \in [X]^{<\omega}: [\vec{c}]_q \in \text{span}(\{[x^0]_q,...,[x^{k-1}]_q\}) \Big\}.
\]
It is clear that $\mathcal{M}$ is a finite set, thus let $j \geq 0$ and $\{c^0,...,c^{j-1}\} \subset \mathcal{M}$ be so that $\{c^0,...,c^{j-1}\}$ is a basis for span$(\mathcal{M}) \subset [X]^{<\omega}$. Then, let also $t \geq 0$ and $y^0,...,y^{t-1} \in ([X]^{<\omega})^{\omega}$ be so that $\mathcal{B} \doteq \{[\vec{c^0}]_q,...,[\vec{c^{j-1}}]_q,[y^0]_q,...,[y^{t-1}]_q\}$ is a basis for span$(\{[x^0]_q,...,[x^{k-1}]_q\})$. Hence, there are $A \in q$, $P_s:t \to 2$ and $C_s:j \to 2$, for each $0 \leq s<k$, so that
\[
x_n^s = \sum_{i=0}^{t-1}P_s(i)y_n^i \triangle \sum_{i=0}^{j-1}C_s(i)c^i,
\]
for every $n \in A$ and $0 \leq s <k$. For each $0 \leq s<k$, let $d_s \doteq \sum_{i=0}^{j-1}C_s(i)c^i$.

We shall prove that there exists an infinite subset $I \subset A$ so that $\{y_n^i: n \in I, 0 \leq i<t\}$ is linearly independent. First, note that for each $c \in [X]^{<\omega}$ and nontrivial function $P:t \to 2$ we have that
\[
\sum_{i=0}^{t-1} P(i) [y^i]_q \neq [\vec{c}]_q.
\]
Therefore, there exist a subset $A_{P,c} \subset A$, $A_{P,c} \in q$, so that 
\[
\sum_{i=0}^{t-1} P(i)y_n^i \neq c
\]
for each $n \in A_{P, c}$. In particular, we conclude that $\{y_n^i: 0 \leq i < t-1\}$ is linearly independent for every $n \in \displaystyle \bigcap_{\substack{P:t \to 2\\ P \neq 0}}A_{P,\emptyset} \doteq A_0$. We may choose $n_0 \in A_0$.

Now, suppose that, given $p \geq 1$, for each $l=0,...,p-1$ we have constructed $A_l \in q$ and $n_l \in A_l$ so that $\{y_{n_l}^i: 0 \leq l<p, 0 \leq i<t\}$ is linearly independent, $(n_l)_{0 \leq l <p}$ is strictly increasing and $A_l \subset A$. Let $\mathcal{C}_p \doteq \text{span}(\{y_{n_l}^i: 0 \leq l<p, 0\leq i< t\})$, 
\[
A_p \doteq \bigcap_{\substack{c \in \mathcal{C}_p\\ P:t \to 2\\ P \neq 0}} A_{P,c}  \ (\subset A),
\]
and fix $n_p \in A_p$, $n_p > n_{p-1}$. It is clear that $A_p \in q$ and also $\{y_{n_l}^i: 0 \leq l\leq p, 0 \leq i<t\}$ is linearly independent, by construction. Then, by induction, there are a sequence $(A_l)_{l \in \omega}$ of elements of $q$ and a strictly increasing sequence $(n_l)_{l \in \omega}$ of naturals so that $n_l \in A_l$ and $\{y_{n_l}^{i}: l \in \omega, 0 \leq i <t\}$ is linearly independent. Furthermore,
\[
x_{n_l}^s = \Big( \sum_{i=0}^{t-1}P_s(i)y_{n_l}^i \Big) \triangle d_s,
\]
for every $l \in \omega$ and $0 \leq s<k$. 
\end{proof}

 Next, we enunciate \textbf{Lemma 3.5} and \textbf{Lemma 3.6} of \cite{tomita34}, and an immediate consequence of \textbf{Lemma 2.1} of \cite{tomita1}.

 \begin{Lema}[\cite{tomita34}, Lemma 3.5]\label{lema3.5}
Let $p_0$ and $p_1$ be incomparable selective ultrafilters. Let $\{a_k^j: k \in \omega\} \in p_j$ be a strictly increasing sequence such that $a_k^j>k$ for every $k \in \omega$ and $j \in 2$. Then there exist subsets $I_0$ and $I_1$ of $\omega$ such that:
\begin{enumerate}[(i)]
\item $\{a_k^j: k \in I_j\} \in p_j$ for each $j \in 2$;
\item $\{[k,a_k^j]:j \in 2, k \in I_j\}$ are pairwise disjoint intervals of $\omega$.
\end{enumerate}
 \end{Lema}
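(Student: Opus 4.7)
The plan is to build $I_0$ and $I_1$ in two stages, using RK-incomparability only in the second. Identify each $p_j$ with the ultrafilter $q_j$ on $\omega$ given by $A \in q_j$ iff $\{a_k^j : k \in A\} \in p_j$; since $k \mapsto a_k^j$ is strictly increasing, $q_j$ is a well-defined ultrafilter with $q_j \equiv_{RK} p_j$, so $q_0, q_1$ are again incomparable selective ultrafilters on $\omega$. For the first stage, fix $j \in 2$ and apply the Ramsey property of $q_j$ to the $2$-coloring $c_j : [\omega]^2 \to 2$ defined by $c_j(\{k, k'\}) = 0$ iff $\max(k, k') > a^j_{\min(k, k')}$, i.e., iff $[k, a_k^j] \cap [k', a_{k'}^j] = \emptyset$. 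Any $1$-homogeneous $I_j^{(1)} = \{k_0 < k_1 < \cdots\} \in q_j$ would force $k_n \leq a^j_{k_0}$ for every $n \geq 1$, contradicting infinitude; hence a $0$-homogeneous $I_j^{(1)} \in q_j$ exists, so the intervals $\{[k, a_k^j] : k \in I_j^{(1)}\}$ are pairwise disjoint.

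For the second stage, set $N(l) = \{k \in I_0^{(1)} : [k, a_k^0] \cap [l, a_l^1] \neq \emptyset\}$ for each $l \in I_1^{(1)}$; this is finite, since by Stage 1 at most one interval from $I_0^{(1)}$ can contain $l$ and any other conflict must come from $k \in I_0^{(1)} \cap [l, a_l^1]$. Enumerate $N(l) = \{\phi_1(l) < \cdots < \phi_{n(l)}(l)\}$. For each $i \geq 1$, the push-forward $(\phi_i)_*(q_1)$ is an ultrafilter on $I_0^{(1)}$ that, by RK-incomparability, cannot equal $q_0$; so there exist $E_i \in q_0$ and $F_i \in q_1$ with $\phi_i(l) \notin E_i$ for all $l \in F_i$. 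Using the $P$-point property of $q_0$ and $q_1$, diagonalize to obtain $I_0 \in q_0$ with $I_0 \setminus E_i$ finite for every $i$ and $I_1 \in q_1$ with $I_1 \setminus F_i$ finite for every $i$; a final trimming of finitely many exceptional elements then forces $N(l) \cap I_0 = \emptyset$ for every $l \in I_1$, which is exactly the cross-$j$ disjointness. Together with Stage~1 this yields the lemma.

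I expect the main obstacle to be the Stage~2 diagonalization in the case where $|N(l)|$ is unbounded on every $q_1$-set: there one must simultaneously manage infinitely many $P$-point almost-containments on both sides while preventing the finite exceptions from accumulating into a genuine conflict at some pair $(k, l)$. Resolving this should require the full strength of selectivity of $q_0$ and $q_1$---in particular their $Q$-point property---so that the countable family of ``avoid $\phi_i(l)$'' conditions can be realized by a single partition selector rather than a countable pile of ``almost'' conditions.
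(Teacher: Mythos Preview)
The paper does not prove this lemma; it is quoted verbatim from \cite{tomita34} and used as a black box, so there is no in-paper argument to compare your approach against. I assess your plan on its own.

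Stage~1 is correct: the Ramsey property of a selective ultrafilter yields $I_j^{(1)}\in q_j$ with the intervals $\{[k,a_k^j]:k\in I_j^{(1)}\}$ pairwise disjoint for each fixed $j$. Stage~2, however, has exactly the gap you flag, and as written it does not close. When $n(l)=|N(l)|$ is unbounded on every $q_1$-large set, the $P$-point diagonalization only gives $I_0\subseteq^* E_i$ and $I_1\subseteq^* F_i$; for a fixed $l\in I_1$ you need $l\in F_i$ and $I_0\subseteq E_i$ simultaneously for every $i\le n(l)$, and since $n(l)$ grows without bound the finitely many exceptions from each almost-inclusion can accumulate into an actual conflict at some pair $(k,l)$. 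No finite trimming repairs this.

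Your instinct that the $Q$-point side of selectivity is what is missing is right, but the clean way to deploy it is \emph{before} Stage~2 rather than inside the diagonalization. Apply selectivity of $q_0$ to the partition of $I_0^{(1)}$ induced by the pairwise disjoint intervals $\{[l,a_l^1]:l\in I_1^{(1)}\}$ (together with the complement), passing to $I_0^{(2)}\in q_0$ that meets each $[l,a_l^1]$ in at most one point. After this refinement every $l\in I_1^{(1)}$ satisfies $|N(l)|\le 2$: at most one $k$ with $l\in[k,a_k^0]$ (by Stage~1 disjointness on the $j=0$ side), and at most one $k\in I_0^{(2)}\cap[l,a_l^1]$. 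With only two partial functions $\phi_1,\phi_2$ in play, RK-incomparability gives $E_1,E_2\in q_0$ and $F_1,F_2\in q_1$ exactly as you describe, and the ordinary finite intersections $I_0=I_0^{(2)}\cap E_1\cap E_2$ and $I_1=I_1^{(1)}\cap F_1\cap F_2$ already force $N(l)\cap I_0=\emptyset$ for every $l\in I_1$. No $P$-point diagonalization and no trimming are needed at all.
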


 As a corollary of the previous lemma, we obtain:

\begin{Lema}\label{finitolincom}
Let $n>0$ and $\{p_j: j \leq n\}$ be incomparable selective ultrafilters. Let $\{a_k^j: k \in \omega\} \in p_j$ be a strictly increasing sequence such that $a_k^j>k$ for every $k \in \omega$ and $j \leq n$. Then there exists a family $\{I_j: j \leq n\}$ of subsets of $\omega$ such that:
\begin{enumerate}[(i)]
\item $\{a_k^j: k \in I_j\} \in p_j$ for each $j \leq n$;
\item $\{[k,a_k^j]:j \leq n , k \in I_j\}$ are pairwise disjoint intervals of $\omega$.
\end{enumerate}
 \end{Lema}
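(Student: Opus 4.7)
The plan is to prove this lemma by induction on $n$, with the base case $n=1$ being exactly Lemma \ref{lema3.5}. For the inductive step, the key observation is that Lemma \ref{lema3.5} can be applied pairwise: once we have the result for $p_0, \ldots, p_{n-1}$, the new ultrafilter $p_n$ can be handled by applying Lemma \ref{lema3.5} separately to each pair $(p_i, p_n)$ with $i < n$, and then intersecting the resulting index sets.

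Precisely, assume the conclusion holds for $n-1$. Given $p_0, \ldots, p_n$ and the sequences $(a_k^j)_{k \in \omega}$ satisfying the hypothesis, apply the induction hypothesis to $p_0, \ldots, p_{n-1}$ to produce subsets $J_0, \ldots, J_{n-1} \subset \omega$ with $\{a_k^j : k \in J_j\} \in p_j$ and with $\{[k, a_k^j] : j \le n-1,\ k \in J_j\}$ pairwise disjoint. Then, for each $i < n$, apply Lemma \ref{lema3.5} to the incomparable selective pair $p_i, p_n$, together with the sequences $(a_k^i)_{k \in \omega}$ and $(a_k^n)_{k \in \omega}$ (which still satisfy the strict-increase and $a_k>k$ conditions), obtaining subsets $L_i, M_i \subset \omega$ realizing its conclusion. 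Finally, set $I_n \doteq \bigcap_{i < n} M_i$ and $I_i \doteq J_i \cap L_i$ for $i < n$.

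Verifying the two conclusions is routine. Each $I_j$ is a finite intersection of sets whose image under the injective map $k \mapsto a_k^j$ lies in $p_j$, so $\{a_k^j : k \in I_j\} \in p_j$. Pairwise disjointness of the family $\{[k, a_k^j] : j \le n,\ k \in I_j\}$ splits into three cases: two intervals from the same index $j \le n-1$ are disjoint because $I_j \subset J_j$ and the induction hypothesis already gives disjointness within $J_j$; two intervals from index $n$ are disjoint because $I_n \subset M_i$ for any $i < n$ and Lemma \ref{lema3.5} includes disjointness within $M_i$; and an interval from index $i < n$ together with one from index $n$ is disjoint because $I_i \subset L_i$, $I_n \subset M_i$, and Lemma \ref{lema3.5} applied to $(p_i, p_n)$ provides precisely this disjointness.

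I do not foresee any serious obstacle: once one notices that pairwise applications of the two-ultrafilter case suffice to control the new sequence against every previously handled one, the rest is bookkeeping. The only care required is tracking that the appropriate intersection still belongs to the correct ultrafilter, which is immediate from closure of ultrafilters under finite intersections.
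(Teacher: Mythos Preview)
Your proof is correct and follows essentially the same approach as the paper's: both argue by induction on $n$ with base case Lemma~\ref{lema3.5}, and at the inductive step apply Lemma~\ref{lema3.5} once to each pair $(p_i,p_n)$ with $i<n$ and then intersect to form $I_n$ (the paper writes your intersection $I_i=J_i\cap L_i$ implicitly as ``$I_j\subset\tilde I_j$''). One small omission in your disjointness check: your three cases do not cover two intervals with \emph{distinct} indices $i,i'\le n-1$, but this follows immediately from the induction hypothesis since $I_i\subset J_i$ and $I_{i'}\subset J_{i'}$.
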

 \begin{proof} We will show that the lemma is true for each $n >0$ by induction. The case $n=1$ is just Lemma \ref{lema3.5}. 
 
 Suppose that the result is true for a given $n_0 >0$. We claim that it is also true for $n_0+1$. Indeed, let $\{p_j: j \leq n_0+1\}$ be incomparable selective ultrafilters and $\{a_k^j: k \in \omega\} \in p_j$ be a strictly increasing sequence such that $a_k^j>k$ for every $k \in \omega$ and $j \leq n_0+1$. By hypothesis, there exists a family $\{\tilde{I_j}: j \leq n_0\}$ of subsets of $\omega$ so that:
 \begin{itemize}
\item $\{a_k^j: k \in \tilde{I_j}\} \in p_j$ for each $j \leq n_0$;
\item $\{[k,a_k^j]:j \leq n_0 , k \in \tilde{I_j}\}$ are pairwise disjoint intervals of $\omega$.
\end{itemize}

Also, by Lemma \ref{lema3.5}, for each $j \leq n_0$ there exist $I_j \subset \tilde{I_j}$ and $K_j \subset \omega$ so that:
  \begin{itemize}
\item $\{a_k^j: k \in I_j\} \in p_j$ and $\{a_k^{n_0+1}: k \in K_j\} \in p_{n_0+1}$;
\item $\{[k,a_k^j]: k \in I_j\} \cup \{[k,a_k^{n_0+1}]: k \in K_j\}$ are pairwise disjoint intervals of $\omega$.
\end{itemize}
 Then, defining $I_{n_0+1} \doteq \bigcap_{j=0}^{n_0}K_j$, we have that $\{I_j:j \leq n_0+1\}$ satisfies the hypothesis we want. Therefore, the lemma is true for every $n >0$.
 \end{proof}
 
The countable version of the previous result is \textbf{Lemma 3.6} of \cite{tomita34}:

 \begin{Lema}[\cite{tomita34}, Lemma 3.6]\label{enumeravelincom}
 Let $\{p_j: j \in \omega\}$ be incomparable selective ultrafilters. Let $\{a_k^j: k \in \omega\} \in p_j$ be a strictly increasing sequence such that $a_k^j> k$ for each $k, j \in \omega$. Then there exists a family $\{I_j: j \in \omega\}$ of subsets of $\omega$ such that:
 \begin{enumerate}[(i)]
     \item $\{a_k^j: k \in I_j\} \in p_j$ for each $j \in \omega$;
     \item  $\{[k,a_k^j]: j \in \omega, k \in I_j\}$ are pairwise disjoint intervals of $\omega$.
 \end{enumerate}
 \end{Lema}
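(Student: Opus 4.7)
The plan is to iterate Lemma \ref{finitolincom} over all finite prefixes $\{p_j : j \leq n\}$, then pass to a countable diagonal using the selectivity of each $p_j$, and finally clean up finitely many residual conflicts per index.

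First, for each $n \in \omega$, apply Lemma \ref{finitolincom} to $\{p_j : j \leq n\}$ with the original sequences $(a_k^j)_{k \in \omega}$, obtaining $\bar{I}_j^{(n)} \subset \omega$ for $j \leq n$. Set $I_j^{(n)} := \bigcap_{m=j}^{n} \bar{I}_j^{(m)}$. Then $\{a_k^j : k \in I_j^{(n)}\} \in p_j$ (injectivity of $k \mapsto a_k^j$ converts the intersection of indices into an intersection of images in $p_j$), the family $\{[k,a_k^j] : j \leq n,\ k \in I_j^{(n)}\}$ is still pairwise disjoint, and $I_j^{(n+1)} \subset I_j^{(n)}$ whenever $j \leq n$. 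Thus for each fixed $j$ the sets $B_j^{(n)} := \{a_k^j : k \in I_j^{(n)}\}$ form a descending sequence in $p_j$ for $n \geq j$; by selectivity of $p_j$ one can extract $B_j = \{b_0^j < b_1^j < \cdots\} \in p_j$ with $b_l^j \in B_j^{(j+l)}$ for every $l$. Pulling back through $k \mapsto a_k^j$ produces $I_j^{\#} = \{k_l^j : l \in \omega\}$ with $k_l^j \in I_j^{(j+l)}$ and $k_0^j < k_1^j < \cdots$.

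Next I analyze disjointness of the intervals $[k_l^j,a_{k_l^j}^j]$. For $j_1 < j_2$ and $l_1 \geq j_2 - j_1$, both $k_{l_1}^{j_1}$ and $k_{l_2}^{j_2}$ belong to $I_{j_1}^{(j_2)}$ and $I_{j_2}^{(j_2)}$, so Lemma \ref{finitolincom} at $n = j_2$ already gives disjoint intervals. For the remaining \emph{remainder} set $R_{j_2} := \{(j_1, l_1) : j_1 < j_2,\ l_1 < j_2 - j_1\}$, the critical observation is that $|R_{j_2}| = \binom{j_2+1}{2}$ is finite, hence $M_{j_2} := \max\{a_{k_{l_1}^{j_1}}^{j_1} : (j_1, l_1) \in R_{j_2}\}$ is a finite integer; since $k_l^{j_2} \to \infty$ the set $F_{j_2} := \{l : k_l^{j_2} \leq M_{j_2}\}$ is finite as well. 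Define $I_j := \{k_l^j : l \notin F_j\}$ (with $F_0 := \emptyset$). Removing a finite set keeps $\{a_k^j : k \in I_j\}$ in the nonprincipal ultrafilter $p_j$. Internal disjointness within each $I_j$ comes from $I_j \subset I_j^{(j)}$ together with the finite case at $n = j$. For $j_1 < j_2$ with $k_{l_1}^{j_1} \in I_{j_1}$ and $k_{l_2}^{j_2} \in I_{j_2}$, either $l_1 \geq j_2 - j_1$ and the intervals are disjoint by the paragraph above, or $l_1 < j_2 - j_1$ and then $l_2 \notin F_{j_2}$ forces $k_{l_2}^{j_2} > M_{j_2} \geq a_{k_{l_1}^{j_1}}^{j_1}$, so $(j_2, l_2)$'s interval begins after $(j_1, l_1)$'s ends.

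The step I expect to be the main obstacle is the diagonal extraction $b_l^j \in B_j^{(j+l)}$ used above: the weaker $P$-point pseudo-intersection would only yield $B_j \subseteq^{*} B_j^{(n)}$ with an unbounded shift between the rank $l$ and the minimal level $n$ guaranteed, and then the remainder sets $R_{j_2}$ would become infinite, breaking the finiteness of $F_{j_2}$ and ruining the cleanup. It is precisely the diagonal-selection form of selectivity (available via the $Q$-point plus $P$-point decomposition of selective ultrafilters, or directly from Ramseyness) that keeps $R_{j_2}$ quadratically bounded in $j_2$ and makes the argument go through.
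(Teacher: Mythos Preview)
The paper does not give its own proof of this lemma: it is quoted verbatim as Lemma~3.6 of \cite{tomita34} and used as a black box, so there is no in-paper argument to compare against. Your proof is self-contained and correct. The strategy---iterate the finite Lemma~\ref{finitolincom}, intersect to obtain a decreasing tower $(I_j^{(n)})_{n\ge j}$ in each $p_j$, diagonalize using selectivity so that the $l$-th retained index of the $j$-th sequence lies in $I_j^{(j+l)}$, and finally trim finitely many indices from each $I_j$ to kill the residual conflicts coming from the triangular remainder $R_{j_2}$---is sound, and your identification of the crucial role of the genuine diagonal selection (as opposed to a mere $P$-point pseudo-intersection) is exactly right: without it the remainder sets $R_{j_2}$ would be infinite and the final trimming step would fail. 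One small clarification worth making explicit is that $M_{j_2}$ is computed over \emph{all} pairs in $R_{j_2}$, regardless of whether they survive the trimming at level $j_1$; this makes the bound uniform and the argument go through without circularity.
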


 \begin{Lema}[\cite{tomita1}]\label{abertosli}
 Let $G$ be a non-discrete Boolean topological group. Then there exist nonempty open sets $\{U_k^j: k \in \omega, j \in \omega\}$ such that if $u_k^j \in U_k^j$ for each $k, j \in \omega$, then $\{u_k^j: k, j \in \omega\}$ is linearly independent.
 \end{Lema}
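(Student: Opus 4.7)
The plan is to realize each $U^j_k$ as a translate $U^j_k = x^j_k + V^j_k$, where $\{x^j_k : k,j \in \omega\}$ is a linearly independent family in $G$ and each $V^j_k$ is a carefully chosen open neighborhood of $0$. Since $G$ is non-discrete and Hausdorff, it is infinite, hence of infinite dimension as an $\mathbb{F}_2$-vector space, so such a linearly independent family exists. For each nonempty finite $S \subset \omega \times \omega$, put $y_S := \sum_{(k,j)\in S} x^j_k$; by linear independence, $y_S \neq 0$. If we can arrange that $y_S \notin \sum_{(k,j)\in S} V^j_k$ for every such $S$, then for any selection $u^j_k = x^j_k + v^j_k \in U^j_k$ we will have $\sum_{(k,j) \in S} u^j_k = y_S + \sum_{(k,j) \in S} v^j_k \neq 0$, which is exactly linear independence of the chosen family.

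To build the $V^j_k$'s, fix enumerations $\omega \times \omega = \{(k_n, j_n) : n \in \omega\}$ and write $V_n := V^{j_n}_{k_n}$, $x_n := x^{j_n}_{k_n}$. Enumerate the nonempty finite subsets of $\omega$ as $(S_n)_{n \in \omega}$ with $S_n \subset \{0, 1, \ldots, n\}$ and $\max S_n = n$ whenever the number of previously listed sets forces it, so that only finitely many $S_n$ have any given maximum value. Construct $V_n$ by recursion on $n$: at step $n$, with $V_0, \ldots, V_{n-1}$ already chosen, use continuity of addition $G^{|S_n|} \to G$ at $(0,\ldots,0)$ together with Hausdorffness to select open neighborhoods $W_i$ of $0$, for $i \in S_n$, with $y_{S_n} \notin \sum_{i \in S_n} W_i$; intersect the relevant $V_i$ with the corresponding $W_i$. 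Since shrinking $V_i$'s only shrinks each $\sum_{i \in S} V_i$, all conditions imposed at previous steps are preserved.

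The main obstacle is ensuring each $V_i$ ends up open, i.e., is modified only finitely often. To address this, I will restrict the shrinking at step $n$ to $V_{\max S_n}$ alone, reducing the task to: find an open $W \ni 0$ with $V_{\max S_n} \cap W$ disjoint from $y_{S_n} + \sum_{i \in S_n \setminus \{\max S_n\}} V_i$. This is possible exactly when $y_{S_n} \notin \overline{\sum_{i \in S_n \setminus \{\max S_n\}} V_i}$, so I will strengthen the inductive hypothesis to include this separation and simultaneously choose $x_n$ at each step of the parallel construction of the linearly independent family so that $x_n$ avoids the finite union of closed sets $y_{T'} + \overline{\sum_{i \in T'} V_i}$ over nonempty $T' \subset \{0, \ldots, n-1\}$ together with $\mathrm{span}\{x_0,\ldots,x_{n-1}\}$; this avoidance is feasible provided the $V_i$'s for $i < n$ are taken small enough, which we control by the same continuity-plus-Hausdorff argument. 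With this bookkeeping, $V_i$ is touched only at those steps $n$ with $\max S_n = i$, of which there are at most $2^i$, so the final $V_i$ is a finite intersection of open neighborhoods of $0$ and hence itself open.
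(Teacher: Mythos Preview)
The paper does not prove this lemma; it is quoted as an immediate consequence of Lemma~2.1 of \cite{tomita1}, so there is no in-paper argument to compare your attempt against directly.

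Your proposal has a real gap at the avoidance step. At stage $n$ you need an $x_n$ outside
\[
\mathrm{span}\{x_0,\dots,x_{n-1}\}\ \cup\ \bigcup_{\emptyset\neq T'\subset n}\Bigl(y_{T'}+\overline{\textstyle\sum_{i\in T'}V_i}\Bigr),
\]
and your only justification that such an $x_n$ exists is that the $V_i$'s for $i<n$ were ``taken small enough'' by a ``continuity-plus-Hausdorff argument''. But Hausdorffness (or regularity) only separates $0$ from finitely many \emph{prescribed} points; it does not guarantee that a union of $2^{n}-1$ translates of closed neighborhoods of $0$ leaves room in $G$. In a precompact group --- and the groups actually used in this paper, carrying the initial topology from homomorphisms into $2$, are precompact --- every open neighborhood of $0$ is covered by finitely many of its translates. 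Hence, no matter how $V_0$ was chosen at stage $0$, once $2^{n}$ exceeds that covering number the elements $y_{T'}\in\mathrm{span}\{x_0,\dots,x_{n-1}\}$ may hit every coset of $V_0$, and the set you must avoid is then all of $G$. An interleaved choice of the $x_i$'s could in principle sidestep this (for instance, in $2^{\omega}$ one can take the $V_i$ to be a strictly decreasing chain of clopen \emph{subgroups} and pick $x_n\in V_{n-1}\setminus V_n$; then every forbidden coset $y_{T'}+V_{\min T'}$ misses $V_{n-1}$), but nothing of this sort appears in your sketch, and the ``continuity-plus-Hausdorff'' slogan does not supply it.
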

 
 The following results ensure the existence of certain homomorphisms, necessary to construct the topological groups we want. Their proofs are based on \textbf{Lemma 3.7} and \textbf{Lemma 4.1} of \cite{tomita34}, and also \textbf{Lemma 4.1} of \cite{tomita2}.
 
 \begin{Lema}\label{ufselimcom}
 Let:
\begin{itemize}
    \item  $E$ be a countable subset of $2^\mathfrak{c}$ and $I \subset E$;
    \item  $F \subset E$ be a finite subset;
     \item for each $\xi \in I$, $k_{\xi} \in \omega$;
    \item $\{p_{\xi}: \xi \in I\}$ be a family of incomparable selective ultrafilters.
    \item for each $\xi \in I$,  $g_{\xi}:\omega \to ([E]^{<\omega})^{k_{\xi}}$ be a function so that $\{g_{\xi}^j(m): j < k_{\xi}, m \in \omega\}$ is linearly independent;
    \item for each $\xi \in I$, $d_{\xi} \in ([E]^{<\omega})^{k_{\xi}}$.
\end{itemize}
Then there exist an increasing sequence $\{b_i: i \in \omega\}\subset \omega$, a surjective function $r: \omega \to I$ and a sequence $\{E_i: i \in \omega\}$ of finite subsets of $E$ such that:
\begin{enumerate}[a)]
    \item $F \subset E_0$; 
    \item $E = \bigcup_{i \in \omega} E_i$;
    \item $r(m) \in E_m$ for each $m \in \omega$;
    \item $\bigcup \{d_{r(m)}^j: j< k_{r(m)}\} \subset E_m$, for each $m \in \omega$;

    \item $E_{m+1} \supset \bigcup(\{g_{\xi}^j(b_m): \xi \in E_m \cap I, j <k_{\xi}\} ) \cup E_m$, for each $m \in \omega$;
    \item $\{g_{r(m)}^j(b_m): j<k_{r(m)} \} \cup \{\{\mu\}: \mu \in E_m\} \text{ is linearly independent}$, for each $m \in \omega$;
    \item $\{b_i: i \in r^{-1}(\xi)\} \in p_{\xi}$, for every $\xi \in I$.
    ~\\
     Furthermore, if $\{y_n: n \in \omega\} \subset E$ is faithfully indexed, then $E_i$ can be arranged for each $i \in \omega$ so that
    \item $\{n \in \omega: y_n \in E_i\} = 2N_i$, for some $N_i \in \omega$, and $(N_i)_{i \in \omega}$ is a strictly increasing sequence.\footnotemark
\end{enumerate}
 \end{Lema}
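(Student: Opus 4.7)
The strategy is a simultaneous induction producing $(b_m)_{m \in \omega}$, $(E_m)_{m \in \omega}$, and $r : \omega \to I$, using the disjoint-intervals mechanism of Lemma \ref{enumeravelincom} (or Lemma \ref{finitolincom} when $I$ is finite) to harmonize the $b_m$'s with the ultrafilters $p_\xi$. First, fix enumerations $E = \{e_n : n \in \omega\}$ (with $F$ in an initial segment and the $y_n$'s arranged so as to make condition h) attainable) and $I = \{\xi_n : n < |I|\}$. For each $\xi \in I$, pick a strictly increasing sequence $(\alpha_k^\xi)_{k \in \omega}$ enumerating a member of $p_\xi$ with $\alpha_k^\xi > k$; apply the appropriate lemma to obtain $J_\xi \subseteq \omega$ with $P_\xi := \{\alpha_k^\xi : k \in J_\xi\} \in p_\xi$ and all intervals $[k, \alpha_k^\xi]$ ($\xi \in I$, $k \in J_\xi$) pairwise disjoint; as a consequence, the $P_\xi$'s are pairwise disjoint subsets of $\omega$.

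Next, enumerate $P := \bigcup_{\xi \in I} P_\xi$ in increasing order as $p_0 < p_1 < \cdots$, and let $\eta(m) \in I$ denote the unique $\xi$ with $p_m \in P_\xi$. Tentatively define $b_m := p_m$ and $r(m) := \eta(m)$. This automatically makes $(b_m)$ strictly increasing, $r$ surjective (each $P_\xi$ being infinite), and, crucially, $\{b_i : r(i) = \xi\} = P_\xi \in p_\xi$, so condition g) is handled by construction. The finite sets $E_m$ are built inductively to satisfy the remaining requirements a), c), d), e), b), h): take $E_0 \supseteq F \cup \{\eta(0)\} \cup \bigcup_{j < k_{\eta(0)}} d_{\eta(0)}^j \cup \{e_0\}$, and for $m \geq 1$,
\[
E_m := E_{m-1} \cup \bigcup_{\xi' \in E_{m-1} \cap I} \{g_{\xi'}^j(b_{m-1}) : j < k_{\xi'}\} \cup \{\eta(m)\} \cup \bigcup_{j < k_{\eta(m)}} d_{\eta(m)}^j \cup \{e_m\},
\]
augmented by any extra elements needed for h).

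The principal obstacle is condition f), the linear independence of $\{g_{\eta(m)}^j(p_m) : j < k_{\eta(m)}\} \cup \{\{\mu\} : \mu \in E_m\}$: the planned $p_m$ may fail it. The key observation is that for any fixed finite $E' \subseteq E$, the set of \emph{bad} $b \in \omega$---those for which some nontrivial sum $\sum_{j \in J} g_\xi^j(b)$ lies in $[E']^{<\omega}$---is finite. Indeed, the linear independence of $\{g_\xi^j(b) : j < k_\xi,\, b \in \omega\}$ implies that for each nonempty $J \subseteq \{0, \ldots, k_\xi - 1\}$ the values $\sum_{j \in J} g_\xi^j(b)$ are distinct as $b$ varies, and only finitely many can land in the finite target $[E']^{<\omega}$. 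Exploiting the selectivity (in particular the $P$-point property) of each $p_\xi$, one diagonalizes across the countably many finite bad sets that arise during the induction to further refine each $J_\xi$ to some $J_\xi^* \subseteq J_\xi$ with $\{\alpha_k^\xi : k \in J_\xi^*\} \in p_\xi$, such that every $\alpha_k^\xi$ with $k \in J_\xi^*$ satisfies f) at the step where it is processed. Re-running the plan with the refined sets $J_\xi^*$ in place of $J_\xi$ yields sequences meeting all of a)--h).
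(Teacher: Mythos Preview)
Your overall plan is reasonable, but there is a genuine circularity at the step where you handle condition f). You build $E_m$ so that it depends on $b_{m-1}$ (via the clause $\bigcup_{\xi'\in E_{m-1}\cap I}\{g_{\xi'}^j(b_{m-1}):j<k_{\xi'}\}$), and then you propose to ``further refine each $J_\xi$ to some $J_\xi^*$'' so that each surviving $\alpha_k^\xi$ is good for the $E_m$ at which it is processed. But refining the $J_\xi$'s changes the enumeration $p_0<p_1<\cdots$, hence changes the $b_m$'s, hence changes the $E_m$'s, hence changes the finite ``bad'' sets you were diagonalizing against. The phrase ``the countably many finite bad sets that arise during the induction'' presupposes a fixed family to diagonalize over, and there is no such family in your setup: the bad sets at stage $m$ depend on choices at earlier stages that your refinement is about to alter. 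As written, ``re-running the plan with the refined sets $J_\xi^*$'' gives no reason why condition f) holds in the re-run.

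The paper breaks this circularity by decoupling the finite sets from the $b_m$'s. It first builds an increasing exhaustion $F_0\subset F_1\subset\cdots$ of $E$ with the stronger closure property
\[
F_{l+1}\supseteq\bigcup\{g_\beta^j(m):m\le l,\ \beta\in F_l\cap I,\ j<k_\beta\}\cup F_l,
\]
i.e.\ $F_{l+1}$ absorbs the values $g_\beta^j(m)$ for \emph{all} $m\le l$, not just for a single chosen $b_l$. Only then does it use the $P$-point and selectivity of each $p_\xi$ to produce $B_\xi=\{a_n^\xi:n\in\omega\}\in p_\xi$ with $a_n^\xi\in A_n^\xi$ (the cofinite set of $m$ for which $\{g_\xi^j(m):j<k_\xi\}\cup\{\{\mu\}:\mu\in F_n\}$ is independent), and finally applies the disjoint-intervals lemma. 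The disjointness of the intervals $[k,a_k^\xi]$ forces $b_m=a_{i_m}^{r(m)}<i_{m+1}$, so that setting $E_m:=F_{i_m}$ makes condition e) automatic from the closure of the $F_n$'s, while $b_m=a_{i_m}^{r(m)}\in A_{i_m}^{r(m)}$ gives condition f) directly. In short: build the finite sets first, independently of the $b$'s; then tailor the $b$'s to the finite sets. Your order of operations is the reverse, and that is what creates the gap.
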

 \footnotetext{For every $K \in \omega$, $K \geq 2$, we could also arrange $E_i$ for each $i \in \omega$ so that $\{n \in \omega: y_n \in E_i\} \subset KN_i$, for some $N_i \in \omega$, and  $(N_i)_{i \in \omega}$ is strictly increasing. The proof would be analogous.}
 \begin{proof}
Suppose first that $I$ is infinite. Let $E \doteq \{\xi_n: n \in \omega\}$ be an enumeration and $s: \omega \to \omega$ be a strictly increasing function such that $\{\xi_{s(j)}: j \in \omega\}= I$. We will first define a family $\{F_n: n \in \omega\}$ of finite subsets of $E$. This family will be used to construct the family $\{E_n: n \in \omega\}$. 

Choose $N_0 \in \omega$ so that $\left\{n \in \omega: y_n \in F \cup \{\xi_0\} \cup (\bigcup \{d_{\xi_{s(0)}}^j: j<k_{\xi_{s(0)}} \}) \right\} \subset 2N_0$, and define
\[
F_0 \doteq \{y_n: n \leq 2N_0\} \cup F \cup \{\xi_0\} \cup (\bigcup \{d_{\xi_{s(0)}}^j: j<k_{\xi_{s(0)}} \}).
\]

Suppose that we have defined finite subsets $F_0,...,F_{l} \subset E$ so that
\begin{enumerate}[1)]
\item $\xi_p \in F_p$ for each $0 \leq p\leq l$;
    \item $F_{p+1} \supset \bigcup(\{g_{\beta}^j(m): m \leq p, \ \beta \in F_{p}\cap I, \ j< k_{\beta}\}) \cup F_{p}$ for each $0 \leq p<l$.
    
    \item $\bigcup \{d_{\xi_{s(p)}}^j: j < k_{\xi_{s(p)}}\} \subset F_p$, for each $0 \leq p \leq l$.
    
    \item $\{n \in \omega: y_n \in F_p\} = 2N_p$, for some $N_p \in \omega$, for each $0 \leq p \leq l$.
\end{enumerate}
Now choose $N_{l+1}>N_{l}$ so that 
\[
\left\{n \in \omega: y_n \in  \bigcup \Big(\{g_{\beta}^j(m): m \leq l, \ \beta \in F_{l}\cap I, \ j< k_{\beta}\} \cup \{d_{\xi_{s(l+1)}}^j: j < k_{\xi_{s(l+1)}}\}\Big) \cup F_{l} \cup \{\xi_{l+1}\}\right\} \subset 2N_{l+1},
\]
and then define 
\[
F_{l+1} \doteq \{y_n: n \leq 2N_{l+1}\} \cup \bigcup \Big(\{g_{\beta}^j(m): m \leq l, \ \beta \in F_{l}\cap I, \ j< k_{\beta}\} \cup \{d_{\xi_{s(l+1})}^j: j < k_{\xi_{s(l+1)}}\} \Big)  \cup F_l \cup \{\xi_{l+1}\}.
\]

It is clear that 1), 2), 3) and 4) are also satisfied for $F_0, ..., F_{l+1}$. Then, we may construct recursively a family $\{F_n: n \in \omega\}$ of finite subsets of $E$ satisfying 1)-4) for every $p \in \omega$. We also have that $E = \bigcup_{i \in \omega}F_i$.

For each $\xi \in I$ and $n \in \omega$, let

\[
    A_n^{\xi} \doteq 
    \{m \in \omega: \{g_{\xi}^j(m): j<k_{\xi} \} \cup \{\{\mu\}: \mu \in F_n\} \text{ is linearly independent}\}.
\]

Since $\{g_{\xi}^j(m): j<k_{\xi} , m \in \omega\}$ is linearly independent and $F_n$ is finite, we have that $A_n^{\xi}$ is cofinite, and then $A_n^{\xi} \in p_{\xi}$, for every $n \in \omega$ and $\xi \in I$. Since selective ultrafilters are $P-$points, for each $\xi \in I$ there exists $A_{\xi} \in p_{\xi}$ so that $A_\xi \setminus A_n^{\xi} \text{ is finite for every } n \in \omega$. 

Now, for each $\xi \in I$, let $v_{\xi}: \omega \to \omega$ be a strictly increasing function so that $A_{\xi} \setminus A_n^{\xi} \subset v_{\xi}(n), \text{ for each } n \in \omega.$ As every $p_{\xi}$ is a selective ultrafilter, for each $\xi \in I$ there exists $B_{\xi} \in p_{\xi}$ such that
\[
B_{\xi} \cap v_{\xi}(1) = \emptyset, B_{\xi} \subset A_{\xi} \text{ and } |[v_{\xi}(n)+1,v_{\xi}(n+1)] \cap B_{\xi}| \leq 1, \text{ for each } n \in \omega.
\]

Let $\{a_n^{\xi}: n \in \omega\}$ be the strictly increasing enumeration of $B_{\xi}$, for each $\xi \in I$. Notice that $a_n^{\xi} > v_{\xi}(n) \geq n$ for each $n \in \omega$ and $\xi \in I$. Thus,
\[
a_n^{\xi} \in A_n^{\xi}, \text{ for each } \xi \in I  \text{ and } n \in \omega,
\]
and, by Lemma \ref{enumeravelincom}, there exists a family $\{I_{\xi}: \xi \in I\}$ of subsets of $\omega$ such that:
\begin{enumerate}[i)]
\item $\{a_i^{\xi}: i \in I_{\xi}\} \in p_{\xi}$ for each $\xi \in I$;
\item $\{[i,a_{i}^{\xi}]: \xi \in I \text{ and } i \in I_{\xi}\}$ are pairwise disjoint intervals of $\omega$.
\end{enumerate}

By ii), the sets $\{I_{\xi}: \xi \in I\}$ are pairwise disjoint. We may also assume without loss of generality that $I_{\xi_{s(k)}} \subset \omega \setminus s(k)$ for every $k \in \omega$. Let $\{i_m: m \in \omega\}$ be the strictly increasing enumeration of $\bigcup_{n \in \omega}I_{\xi_{s(n)}}$ and $r: \omega \to I$ be such that $r(m)=\xi_{s(i)}$ if and only if $i_m \in I_{\xi_{s(i)}}$. Define also $b_{m} \doteq a_{i_m}^{r(m)}$ and $E_m \doteq F_{i_m}$, for each $m \in \omega$.

Conditions a) and b) are trivially satisfied. Moreover, given $m \in \omega$, if $i_m \in I_{\xi_{s(i)}}$, then $i_m \geq s(i)$, and hence $r(m) \in E_m$. Therefore, conditions c) and d) are satisfied. To check condition e), note that $b_m = a_{i_m}^{r(m)} \leq i_{m+1}-1$ and $E_m = F_{i_m} \subset F_{i_{m+1}-1}$ for each $m \in \omega$, thus
\begin{align*}
E_m &\cup \bigcup (\{g_{\xi}^j(b_m): \xi \in E_m \cap I, j< k_{\xi}\} ) \\ &\subset
F_{i_{m+1}-1} \cup \bigcup (\{g_{\xi}^j(p): p \leq i_{m+1}-1, \ \xi \in F_{i_{m+1}-1} \cap I, \ j< k_{\xi}\}  ) \\
&\subset F_{i_{m+1}}= E_{m+1}.
\end{align*}

Condition f) is also satisfied, since $b_m = a_{i_m}^{r(m)} \in A_{i_m}^{r(m)}$ for each $m \in \omega$, and hence,
\[
    \{g_{r(m)}^j(b_m): j<k_{r(m)} \} \cup \{\{\mu\}: \mu \in F_{i_m}\} \text{ is linearly independent.}
\]
To check condition g), simply note that, given $\xi \in I$, 
\[
\{b_m: m \in r^{-1}(\xi)\} = \{a_{i}^{\xi}: i
\in I_{\xi} \} \in p_{\xi}.
\]
Condition $h)$ follows by construction.

If $I$ is finite, the proof is basically the same, replacing the use of Lemma \ref{enumeravelincom} by Lemma \ref{finitolincom}.
\end{proof}

 \begin{Lema}\label{lemaunitario}
 Let:
\begin{itemize}
    \item  $Z_0$ and $Z_1$ be disjoint countable subsets of $2^\mathfrak{c}$, and $E= Z_0 \cupdot Z_1$;
    \item  $I_0 \subset Z_0$, $I_1 \subset Z_1$, and $I \doteq I_0 \cupdot I_1$;
       \item $\mathcal{F} \subset [E]^{<\omega}$ be a finite linearly independent subset and, for each $f \in \mathcal{F}$, let $n_f \in 2$;
     \item for each $\xi \in I$, $k_{\xi} \in \omega$;
    \item $\{p_{\xi}: \xi \in I\}$ be a family of incomparable selective ultrafilters;
    \item for each $\xi \in I$, $\delta_{\xi}=0$ if $\xi \in I_0$ and $\delta_{\xi}=1$ if $\xi \in I_1$;
    \item for every $\xi \in I$,  $g_{\xi}:\omega \to ([Z_{\delta_{\xi}}]^{<\omega})^{k_{\xi}}$ be a function so that $\{g_{\xi}^j(m): j < k_{\xi}, m \in \omega\}$ is linearly independent;
    \item for each $\xi \in I$, $d_{\xi} \in ([Z_{\delta_{\xi}}]^{<\omega})^{k_{\xi}}$;
    \item $\{z^0_n: n \in \omega\} \subset Z_0$ and $\{z_n^1: n \in \omega\} \subset Z_1$ be sequences of pairwise distinct elements.
\end{itemize}
Then, given $(\alpha_0, \alpha_1) \in 2 \times 2$, there exists a homomorphism $\Phi:[E]^{<\omega} \to 2$ such that:
\begin{enumerate}[(i)]
\item $\Phi(f)=n_f$, for every $f \in \mathcal{F}$;
\item for every $\xi \in I$, 
\[\left\{n \in \omega: \left(\Phi(g_{\xi}^0(n)),...,\Phi(g_{\xi}^{k_{\xi}-1}(n))\right) = \left(\Phi(d_{\xi}^0),...\Phi(d_{\xi}^{k_{\xi}-1})\right)\right\} \in p_{\xi};\]

\item $\{n \in \omega: \big(\Phi(\{z^0_n\}), \Phi(\{z^1_n\})\big)=(\alpha_0,\alpha_1)\} \text{ is finite.}$
 
\end{enumerate} 
 \end{Lema}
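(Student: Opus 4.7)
The plan is to construct $\Phi$ by stages, organising $E$ with the help of Lemma \ref{ufselimcom}. I would first set $F \doteq \bigcup_{f\in\mathcal{F}}f$ and consider the faithfully indexed sequence $y_{2n}\doteq z_n^0$, $y_{2n+1}\doteq z_n^1$ in $E$ (faithful because $Z_0\cap Z_1 = \emptyset$ and each of the two given sequences is one-to-one). Applying Lemma \ref{ufselimcom} with these data produces $(b_i)_{i\in\omega}$, a surjection $r:\omega\to I$, a nested covering $(E_i)_{i\in\omega}$ of $E$, and $(N_i)_{i\in\omega}$ satisfying items (a)--(h). In particular item (h) forces $z_n^0,z_n^1\in E_i$ iff $n<N_i$, so both members of the pair $(z_n^0,z_n^1)$ lie in $E_{m+1}\setminus E_m$ for the unique $m$ with $N_m\leq n<N_{m+1}$.

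I would then define $\Phi$ on singletons $\{\mu\}$, $\mu\in E$, inductively on the stages. For the base, I extend the prescribed assignment $f\mapsto n_f$ (consistent since $\mathcal{F}$ is linearly independent) to a homomorphism on $[E_0]^{<\omega}$ in the standard way, by completing $\mathcal{F}$ to a basis and setting the remaining values arbitrarily. At stage $m\geq 0$, I would choose $\Phi(\{\mu\})$ for $\mu\in E_{m+1}\setminus E_m$ so as to meet two families of requirements simultaneously:
\begin{description}
\item[(C1)] $\Phi(g_{r(m)}^j(b_m))=\Phi(d_{r(m)}^j)$ for $j<k_{r(m)}$, where the right-hand side is already determined by $\Phi|_{E_m}$ because $d_{r(m)}^j\subset E_m$ by item (d);
\item[(C2)] $\Phi(\{z_n^{1-\delta_{r(m)}}\})=1-\alpha_{1-\delta_{r(m)}}$ for every $n$ with $N_m\leq n<N_{m+1}$.
\end{description}

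The main step, and the place where the structural hypothesis really gets used, is the joint solvability of (C1) and (C2). Item (f) asserts that the sets $g_{r(m)}^j(b_m)$ remain linearly independent after reducing modulo $E_m$, so (C1) amounts to $k_{r(m)}$ linearly independent affine conditions on $\Phi|_{E_{m+1}\setminus E_m}$. Crucially, since $g_{r(m)}^j(b_m)\subset Z_{\delta_{r(m)}}$, the unknowns appearing in (C1) all lie in $Z_{\delta_{r(m)}}$, whereas the unknowns in (C2) are singletons from $Z_{1-\delta_{r(m)}}$. Hence (C1) and (C2) act on disjoint sets of new coordinates, and any joint solution, combined with an arbitrary assignment for the remaining new coordinates, yields a valid extension $\Phi|_{E_{m+1}}$.

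To finish I would verify (i)--(iii). Item (i) is immediate from the base step. For (ii), whenever $r(m)=\xi$ the constraint (C1) gives the target equality at $n=b_m$, and by item (g) $\{b_m:r(m)=\xi\}\in p_\xi$, so the set in (ii) contains an element of $p_\xi$. For (iii), at the unique stage $m$ where $z_n^0$ and $z_n^1$ are added, (C2) forces one of $\Phi(\{z_n^0\}),\Phi(\{z_n^1\})$ to differ from the corresponding $\alpha_i$, so $(\Phi(\{z_n^0\}),\Phi(\{z_n^1\}))\neq(\alpha_0,\alpha_1)$ for every $n\geq N_0$, leaving only finitely many possible exceptions. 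The trivial case $I=\emptyset$, in which Lemma \ref{ufselimcom} is not directly applicable, can be handled by a direct construction that bypasses (C1) altogether.
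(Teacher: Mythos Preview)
Your proposal is correct and follows essentially the same route as the paper: apply Lemma~\ref{ufselimcom} with $F=\bigcup\mathcal{F}$ and the interleaved sequence $y_{2n}=z_n^0$, $y_{2n+1}=z_n^1$, then build $\Phi$ by finite extensions over the $E_m$'s, using item~(f) for the solvability of the $g$-constraints and the $Z_0/Z_1$ separation to decouple them from the $z$-constraints. The paper writes out explicitly the linearly independent set $\{\{z_n^{1-\delta_{r(l)}}\}: z_n^0\in E_{l+1}\setminus E_l\}\cup\{g_{r(l)}^j(b_l):j<k_{r(l)}\}\cup\{\{\mu\}:\mu\in E_l\}$ where you argue more abstractly that (C1) and (C2) involve disjoint new coordinates, but the content is identical.
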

 \begin{proof}
 Firstly we apply Lemma \ref{ufselimcom} using the elements given in the hypothesis, $F = \bigcup \mathcal{F}$, and the following sequence $y: \omega \to E$ for item h): for each $n \in \omega$, write $n = 2q+j$ for the unique $q \in \omega$ and $j \in 2$, and put
\[
y_{2q+j} =
 \begin{cases}
    z_q^0, & \text{if} \ j=0\\
    z_q^1, & \text{if } j=1.
\end{cases}
\]
Thus we obtain $\{b_i: i \in \omega\} \subset \omega$, $r:\omega \to I$ and $\{E_m: m \in \omega\} \subset [E]^{<\omega}$ satisfying a)--h).
 
 We shall define auxiliary homomorphisms $\Phi_m:[E_m]^{<\omega} \to 2$ inductively. First, we define $\Phi_0:[E_0]^{<\omega} \to 2$ so that $\Phi_0(f)=n_f$ for each $f \in \mathcal{F}$. Now, suppose that, for $l \in \omega$, we have defined homomorphisms $\Phi_m:[E_m]^{<\omega} \to 2$ for each $m=0,...,l$, so that
\begin{enumerate}[(1)]
    \item $\Phi_{m+1}$ extends $\Phi_{m}$ for each $0 \leq m<l$;
    \item for every $0 \leq m<l$,
 \[
    \big(\Phi_{m+1}(g^0_{r(m)}(b_{m})),...,\Phi_{m+1}(g^{k_{r(m)}-1}_{r(m)}(b_m))\big) = \big(\Phi_{m}(d_{r(m)}^0),...,\Phi_{m}(d_{r(m)}^{k_{r(m)}-1})\big);
    \]
    \item $\big(\Phi_m(\{z_n^0\}),\Phi_m(\{z_n^1\})\big) \neq (\alpha_0,\alpha_1)$ for each $0 < m \leq l$ and $n \in \omega$ so that $z_n^0 \in E_m \setminus E_{m-1}$.
\end{enumerate}
 
 We shall prove that we may define $\Phi_{l+1}:[E_{l+1}]^{<\omega} \to 2$ so that $\Phi_0,...,\Phi_{l+1}$ also satisfy (1), (2) and (3). For this, suppose without loss of generality that $r(l) \in I_0$. By item f) of Lemma \ref{ufselimcom}, $\{g_{r(l)}^j(b_l): j < k_{r(l)}\} \cup \{\{\mu\}: \mu \in E_l\}$ is linearly independent, and, by item h), for every $n, m \in \omega$, $z_n^0 \in E_{m}$ if, and only if, $z_n^1 \in E_m$. Since $g_{r(l)}^j(b_l) \in [Z_0]^{<\omega}$ for every $j<k_{r(l)}$, and $z_n^1 \in Z_1$ for every $n \in \omega$, we conclude that
\begin{equation*} \tag{\dag}
\{\{z_n^1\}:  z_n^0 \in E_{l+1} \setminus E_l\} \cup \{g_{r(l)}^j(b_{l}): j<k_{r(l)}\} \cup \{\{\mu\}: \mu \in E_{l}\}
\end{equation*} 
is linearly independent. Therefore, using items d) and e) of Lemma \ref{ufselimcom}, we may define $\Phi_{l+1}:[E_{l+1}]^{<\omega} \to 2$ extending $\Phi_l$ so that 
\[
(\Phi_{l+1}(g_{r(l)}^0(b_{l})),...,\Phi_{l+1}(g_{r(l)}^{k_{r(l)}-1}(b_{l}))) = (\Phi_{l}(d_{r(l)}^{0}),...,\Phi_{l}(d_{r(l)}^{k_{r(l)}-1}))
\]
and 
\begin{equation*} \tag{\ddag}
\Phi_{l+1}(\{z_n^1\}) \neq \alpha_1
\end{equation*}
for each $n \in \omega$ such that $z_n^0 \in E_{l+1} \setminus E_l$. Thus, we have that $\Phi_0,...,\Phi_{l+1}$ also satisfy (1), (2) and (3), and therefore there exists a sequence $(\Phi_m)_{m \in \omega}$ of homomorphisms $\Phi_m:[E_m]^{<\omega} \to 2$ satisfying these properties.

We claim that the homomorphism $\Phi \doteq \bigcup_{n \in \omega} \Phi_n : [E]^{<\omega} \to 2$ satisfies the hypothesis we want. In fact, items (i) and (iii) are clear from the construction and item (ii) follows from the fact that for every $\xi \in I$,
\[
(\Phi(g_{\xi}^0(b_i)),...,\Phi(g_{\xi}^{k_{\xi}-1}(b_i))) = (\Phi(d_{\xi}^0),..., \Phi(d_{\xi}^{k_{\xi}-1})),
\]
 for each $i \in r^{-1}(\xi)$, and that $\{b_i: i \in r^{-1}(\xi) \} \in p_{\xi}$, by item g) of Lemma \ref{ufselimcom}. 
 \end{proof}
 
 The next result is a stronger version of the previous lemma, and uses it in its proof.
 
\begin{Lema}\label{completo}
 Let:
\begin{itemize}
    \item  $Z_0$ and $Z_1$ be disjoint countable subsets of $2^\mathfrak{c}$, and $E= Z_0 \cupdot Z_1$;
    \item  $I_0 \subset Z_0$, $I_1 \subset Z_1$, and $I \doteq I_0 \cupdot I_1$;
     \item $\mathcal{F} \subset [E]^{<\omega}$ be a linearly independent finite subset and, for each $f \in \mathcal{F}$, let $n_f \in 2$;
     \item for each $\xi \in I$, $k_{\xi} \in \omega$;
    \item $\{p_{\xi}: \xi \in I\}$ be a family of incomparable selective ultrafilters;
    \item for each $\xi \in I$, $\delta_{\xi}=0$ if $\xi \in I_0$ and $\delta_{\xi}=1$ if $\xi \in I_1$;
    \item for every $\xi \in I$,  $g_{\xi}:\omega \to ([Z_{\delta_{\xi}}]^{<\omega})^{k_{\xi}}$ be a function so that $\{g_{\xi}^j(m): j < k_{\xi}, \ m \in \omega\}$ is linearly independent;
    \item for each $\xi \in I$, $d_{\xi} \in ([Z_{\delta_{\xi}}]^{<\omega})^{k_{\xi}}$;
    \item $\{y^0_n: n \in \omega\} \subset [Z_0]^{<\omega}$ and $\{y_n^1: n \in \omega\} \subset [Z_1]^{<\omega}$ be linearly independent subsets.
\end{itemize}

Suppose that $|Z_i \setminus \bigcup \{y_n^i: n \in \omega\}| = \omega$, for each $i \in 2$. Then, given $(\alpha_0,\alpha_1) \in 2 \times 2$, there exists a homomorphism $\Phi:[E]^{<\omega} \to 2$ such that:
\begin{enumerate}[(i)]
\item $\Phi(f)=n_f$, for every $f \in \mathcal{F}$;
\item for every $\xi \in I$, 
\[\left\{n \in \omega: \left(\Phi(g_{\xi}^0(n)),...,\Phi(g_{\xi}^{k_{\xi}-1}(n))\right) = \left(\Phi(d_{\xi}^0),...,\Phi(d_{\xi}^{k_{\xi}-1})\right)\right\} \in p_{\xi};\]

\item $\{n \in \omega: (\Phi(y^0_n), \Phi(y_n^1))=(\alpha_0,\alpha_1)\} \text{ is finite.}$
 \end{enumerate}  
 \end{Lema}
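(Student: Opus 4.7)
My plan is to reduce Lemma \ref{completo} directly to Lemma \ref{lemaunitario} via a change-of-basis argument that ``linearises'' each multi-element set $y_n^i$ into a singleton. The hypothesis $|Z_i\setminus\bigcup_n y_n^i|=\omega$ will be used crucially to supply auxiliary ``fresh'' elements around which to build the required linear isomorphism.

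First I would pick, for each $i\in\{0,1\}$, an injective sequence $(w_n^i)_{n\in\omega}$ contained in $Z_i\setminus\bigcup_m y_m^i$; this is possible by the hypothesis. Then I would check that the set $\mathcal{S}_i:=\{y_n^i:n\in\omega\}\cup\{\{w_n^i\}:n\in\omega\}$ is linearly independent in $[Z_i]^{<\omega}$: in any finite vanishing combination $\sum_n c_n y_n^i+\sum_n d_n\{w_n^i\}=\emptyset$, inspecting the coefficient of each $w_m^i$ (which lies in no $y_n^i$) yields $d_m=0$, and linear independence of $\{y_n^i:n\in\omega\}$ then forces every $c_n=0$. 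By Zorn's lemma I would extend $\mathcal{S}_i$ to a basis $\mathcal{B}_i=\mathcal{S}_i\cup\{\{\mu\}:\mu\in B_i\}$ of $[Z_i]^{<\omega}$, with $B_i\subset Z_i\setminus\{w_n^i:n\in\omega\}$, and then define $T_i:[Z_i]^{<\omega}\to[Z_i]^{<\omega}$ to be the linear involution that swaps $y_n^i\leftrightarrow\{w_n^i\}$ for every $n$ and fixes $\{\mu\}$ for $\mu\in B_i$. Set $T:=T_0\oplus T_1$, viewed as a linear automorphism of $[E]^{<\omega}=[Z_0]^{<\omega}\oplus[Z_1]^{<\omega}$.

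Next I would apply Lemma \ref{lemaunitario} to the transformed data $\widetilde{\mathcal{F}}:=T(\mathcal{F})$ (with the prescribed values $n_{T(f)}:=n_f$), $\widetilde{g}_\xi:=T\circ g_\xi$, $\widetilde{d}_\xi:=T(d_\xi)$, the sequences $\{w_n^0\},\{w_n^1\}$, and the target $(\alpha_0,\alpha_1)$. All hypotheses transfer: $T$ respects the $Z_0/Z_1$ decomposition, so $\widetilde{g}_\xi(n)\in([Z_{\delta_\xi}]^{<\omega})^{k_\xi}$; $T$ is a linear automorphism, so both $\widetilde{\mathcal{F}}$ and $\{\widetilde{g}_\xi^{j}(m):j<k_\xi,\ m\in\omega\}$ remain linearly independent; and the $w_n^i$ are pairwise distinct elements of $Z_i$. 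This yields a homomorphism $\Psi:[E]^{<\omega}\to 2$ satisfying conditions (i)--(iii) of Lemma \ref{lemaunitario} for the tilded data.

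Finally I would set $\Phi:=\Psi\circ T$ and verify the three conclusions of Lemma \ref{completo}. Condition (i) holds because $\Phi(f)=\Psi(T(f))=n_{T(f)}=n_f$. Condition (ii) follows from $\Phi(g_\xi^j(n))=\Psi(\widetilde{g}_\xi^{j}(n))$ and $\Phi(d_\xi^j)=\Psi(\widetilde{d}_\xi^{j})$, so the $p_\xi$-large set provided by Lemma \ref{lemaunitario}(ii) is precisely the one required here. Condition (iii) is where the reduction pays off: since $T(y_n^i)=\{w_n^i\}$ by construction, $\Phi(y_n^i)=\Psi(\{w_n^i\})$, and hence $\{n:(\Phi(y_n^0),\Phi(y_n^1))=(\alpha_0,\alpha_1)\}=\{n:(\Psi(\{w_n^0\}),\Psi(\{w_n^1\}))=(\alpha_0,\alpha_1)\}$, which is finite by Lemma \ref{lemaunitario}(iii). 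The main technical point to handle carefully is the basis extension in the second paragraph---completing $\mathcal{S}_i$ to a basis of $[Z_i]^{<\omega}$ using only singletons $\{\mu\}$ with $\mu\notin\{w_n^i:n\in\omega\}$---but this is a routine Zorn's lemma maximality argument.
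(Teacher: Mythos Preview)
Your proposal is correct and takes essentially the same approach as the paper: both reduce Lemma~\ref{completo} to Lemma~\ref{lemaunitario} via a linear automorphism of $[E]^{<\omega}$ that respects the $Z_0/Z_1$ splitting and sends each $y_n^i$ to a singleton, then pull back the homomorphism. The only cosmetic difference is the choice of target singletons---the paper enumerates $\bigcup_n y_n^i$ as $\{z_n^i:n\in\omega\}$ and sends $y_n^i\mapsto\{z_n^i\}$, matching the two basis complements (each of size $\omega$ by the hypothesis), whereas you pick fresh elements $w_n^i\in Z_i\setminus\bigcup_m y_m^i$ and build an involution swapping $y_n^i\leftrightarrow\{w_n^i\}$; your use of the hypothesis is slightly more direct, but the argument is the same in substance.
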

 \begin{proof}
For each $i \in 2$, let $\{z_n^i: n \in \omega\}$ be an enumeration of $\bigcup \{y_n^i: n \in \omega\}$. Next, we extend $\{y_n^i: n \in \omega\}$ to a basis $\mathcal{B}^i$ of $[Z_i]^{<\omega}$ and also  $\{\{z_n^i\}: n \in \omega\}$ to a basis $\mathcal{C}^i$ of $[Z_i]^{<\omega}$, for each $i \in 2$. By assumption, $|\mathcal{C}^i \setminus \{\{z_n^i\}: n \in \omega\} | = |\mathcal{B}^i \setminus \{y_n^i: n \in \omega\}| = \omega$, thus we may consider enumerations $\{e_k^i: k \in \omega \}$ of $\mathcal{C}^i \setminus \{\{z_n^i\}: n \in \omega\}\}$ and $\{f_k^i: k \in \omega \}$ of $\mathcal{B}^i \setminus \{y_n^i: n \in \omega\}$. It is clear that both $\mathcal{B}^0 \cup \mathcal{B}^1$ and $\mathcal{C}^0 \cup \mathcal{C}^1$ are basis of $[E]^{<\omega}$. 
 
Let $\theta: [E]^{<\omega} \to [E]^{<\omega}$ be the isomorphism defined by 
\[
\theta(y_n^i) = \{z_n^i\},
\]
and
\[
\theta(f_k^i) = e_k^i,
\]
for each $i \in2$ and $n, k \in \omega$. Note that $\restr{\theta}{[Z_i]^{<\omega}}:[Z_i]^{<\omega}\to [Z_i]^{<\omega}$ is also an isomorphism, for each $i \in 2$.

Let, for every $\xi \in I$, $h_{\xi} : \omega \to ([Z_{\delta_{\xi}}]^{<\omega})^{k_{\xi}}$ be given by $h_{\xi}^j(n)= \theta(g_{\xi}^j(n))$ for each $n \in \omega$ and $j<k_{\xi}$, and $\overline{d}_{\xi} \in ([Z_{\delta_{\xi}}]^{<\omega})^{k_{\xi}}$ be given by $\overline{d}_{\xi}^j = \theta(d_{\xi}^j)$, for each $j<k_{\xi}$. 

By Lemma \ref{lemaunitario}, there exists a homomorphism $\overline{\Phi}:[E]^{<\omega} \to 2$ so that:
\begin{enumerate}[(i)]
\item $\overline{\Phi}(\theta(f))=n_f$, for every $f \in \mathcal{F}$;
\item For every $\xi \in I$, $\left\{n \in \omega: \left(\overline{\Phi}(h_{\xi}^0(n)),...,\overline{\Phi}(h_{\xi}^{k_{\xi}-1}(n))\right) = \left(\overline{\Phi}(\overline{d}_{\xi}^0),...\overline{\Phi}(\overline{d}_{\xi}^{k_{\xi}-1})\right)\right\} \in p_{\xi}$;

\item $\{n \in \omega: (\overline{\Phi}(\{z^0_n\}), \overline{\Phi}(\{z^1_n\}))=(\alpha_0,\alpha_1)\} \text{ is finite.}$
 
\end{enumerate} 

Thus, the homomorphism $\Phi \doteq \overline{\Phi} \circ \theta:[E]^{<\omega} \to 2$ satisfies the hypothesis we want.

 \end{proof}
  \begin{Rem}\label{RemLema}
 Note that in the statement of the previous lemma, item (iii) can be replaced by the following (stronger) condition, for a given $\alpha \in 2$:
 \[
 \textit{(iii) } \{n \in \omega: \Phi(y_n^0 \triangle y_n^1) = \alpha\} \text{ is finite.}
 \]
 
 Indeed, we could replace condition ($\dag$) in the proof of Lemma \ref{lemaunitario} by the fact that
 \[\{\{z_n^0, z_n^1\}:  z_n^0 \in E_{l+1} \setminus E_l\} \cup \{g_{r(l)}^j(b_{l}): j<k_{r(l)}\} \cup \{\{\mu\}: \mu \in E_{l}\}
  \]
  is linearly independent, thus in equation ($\ddag$) we could choose
 \[\Phi_{l+1}(\{z_n^0, z_n^1\}) \neq \alpha\]
 for each $n \in \omega$ such that $z_n^0 \in E_{l+1} \setminus E_l$. Then, the proof of Lemma \ref{completo} would remain the same, just replacing the old condition with the new one when required.
 \end{Rem}

The next result is an easy corollary of the previous lemma.
 
 \begin{Corol}\label{homfraco}
 Let:
 \begin{itemize}
     \item $E$ be a countable subset of $2^\mathfrak{c}$;
     \item $I \subset E$;
     \item $\mathcal{F} \subset [E]^{<\omega}$ be a linearly independent finite subset and, for each $f \in \mathcal{F}$, let $n_f \in 2$;
     \item for each $\xi \in I$, $k_{\xi} \in \omega$.
     \item $\{p_{\xi}: \xi \in I\}$ be a family of incomparable selective ultrafilters;
     \item for every $\xi \in I$, $g_{\xi}: \omega \to ([E]^{<\omega})^{k_{\xi}}$ be a function so that $\{g_{\xi}^j(m): j< k_{\xi}, \ m \in \omega\}$ is linearly independent;
     \item for every $\xi \in I$, $d_{\xi} \in ([E]^{<\omega})^{k_{\xi}}$.
     \end{itemize}
      Then there exists a homomorphism $\Phi:[E]^{<\omega} \to 2$ such that:
\begin{enumerate}[(i)]
\item $\Phi(f)=n_f$, for every $f \in \mathcal{F}$;
\item For every $\xi \in I$, $\left\{n \in \omega: \left(\Phi(g_{\xi}^0(n)),...,\Phi(g_{\xi}^{k_{\xi}-1}(n))\right) = \left(\Phi(d_{\xi}^0),...\Phi(d_{\xi}^{k_{\xi}-1})\right)\right\} \in p_{\xi}$.
 \end{enumerate}  
     \end{Corol}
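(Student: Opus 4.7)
The plan is to reduce the corollary directly to Lemma \ref{completo} by artificially embedding the given data into the two-sided framework required there. Concretely, I would pick any countable $Z_1 \subset 2^{\mathfrak{c}} \setminus E$ and set $Z_0 \doteq E$, $I_0 \doteq I$, $I_1 \doteq \emptyset$, and $E' \doteq Z_0 \cupdot Z_1$. Then for every $\xi \in I = I_0$ we have $\delta_\xi = 0$, so the given $g_\xi : \omega \to ([E]^{<\omega})^{k_\xi} = ([Z_0]^{<\omega})^{k_\xi}$ and $d_\xi \in ([Z_0]^{<\omega})^{k_\xi}$ already satisfy the codomain requirement of Lemma \ref{completo}; likewise $\mathcal{F} \subset [E]^{<\omega} = [Z_0]^{<\omega} \subset [E']^{<\omega}$ together with its labels $n_f$ transfers unchanged.

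The only remaining hypotheses of Lemma \ref{completo} are the linearly independent sequences $\{y_n^i : n \in \omega\} \subset [Z_i]^{<\omega}$ with $|Z_i \setminus \bigcup\{y_n^i : n \in \omega\}| = \omega$. To supply these, I would partition each $Z_i$ into two countably infinite pieces $Z_i = A_i \cupdot B_i$, enumerate $A_i = \{a_n^i : n \in \omega\}$ faithfully, and set $y_n^i \doteq \{a_n^i\}$. Each $\{y_n^i : n \in \omega\}$ is then trivially linearly independent in $[Z_i]^{<\omega}$, and $Z_i \setminus \bigcup\{y_n^i : n \in \omega\} = B_i$ is infinite. (The degenerate case in which $E$ is finite forces $I = \emptyset$ or all $k_\xi = 0$, since otherwise the set $\{g_\xi^j(m) : j < k_\xi,\ m \in \omega\}$ could not be linearly independent inside the finite-dimensional space $[E]^{<\omega}$; in that degenerate case the corollary reduces to extending $\mathcal{F}$ with its prescribed values to a homomorphism on $[E]^{<\omega}$, which is immediate.)

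With all hypotheses verified, I would invoke Lemma \ref{completo} with any choice of $(\alpha_0, \alpha_1) \in 2 \times 2$ (item (iii) of that lemma is irrelevant here) to obtain a homomorphism $\Phi' : [E']^{<\omega} \to 2$ satisfying items (i) and (ii). Setting $\Phi \doteq \restr{\Phi'}{[E]^{<\omega}}$, condition (i) of the corollary holds because $\mathcal{F} \subset [E]^{<\omega}$, and condition (ii) holds because for each $\xi \in I$ the tuples $g_\xi(n)$ and $d_\xi$ have entries in $[E]^{<\omega}$, so $\Phi'$ and $\Phi$ agree on them.

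There is no genuine obstacle: the substantive work (the inductive extension controlling the $p_\xi$-large sets of agreement) has already been carried out inside Lemmas \ref{lemaunitario} and \ref{completo}, and what remains here is only the bookkeeping reduction described above, together with the harmless manufacture of the dummy linearly independent sequences $\{y_n^i\}$.
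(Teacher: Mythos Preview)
Your reduction is correct and is exactly the kind of argument the paper has in mind: the paper gives no explicit proof at all, merely stating that the result ``is an easy corollary of the previous lemma'' (Lemma~\ref{completo}). Your manufacture of a dummy $Z_1$ disjoint from $E$, together with $I_1=\emptyset$ and the singleton sequences $y_n^i$, is a clean way to embed the data of the corollary into the hypotheses of Lemma~\ref{completo}; the restriction of the resulting $\Phi'$ to $[E]^{<\omega}$ then gives what is required, since conditions (i) and (ii) only involve elements of $[E]^{<\omega}$ and condition (iii) of Lemma~\ref{completo} is simply discarded. The degenerate finite-$E$ discussion is harmless (and in fact unnecessary in the paper's applications, where $E$ is always a suitably closed, hence countably infinite, set).
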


     Although the proof of the following result is similar to the proof of Lemma \ref{lemaunitario} and Lemma \ref{completo}, we present it here for the sake of completeness.
      
    \begin{Lema}\label{caso2.2}
     Let:
 \begin{itemize}
     \item $E$ be a countable subset of $2^\mathfrak{c}$;
     \item $I \subset E$;
      \item $\mathcal{F} \subset [E]^{<\omega}$ be a linearly independent finite subset and, for each $f \in \mathcal{F}$, let $n_f \in 2$;
     \item $n \in \omega$;
     \item $\{p_{\xi}: \xi \in I\}$ be a family of incomparable selective ultrafilters;
     \item for every $\xi \in I$, $g_{\xi}: \omega \to ([E]^{<\omega})^{n}$ be a function so that $\{g_{\xi}^j(m): j< n, \ m \in \omega\}$ is linearly independent;
     \item for every $\xi \in I$, $d_{\xi} \in ([E]^{<\omega})^{n}$;
      \item $\{y^j_k: k \in \omega, \ j\leq n\} \subset [E]^{<\omega}$ be a linearly independent subset.
     \end{itemize}
       Suppose that $|E \setminus \bigcup \{y_k^j: k \in \omega, \ j \leq n\}|= \omega$. Then, given $(\alpha_0,...,\alpha_n) \in 2^{n+1}$, there exists a homomorphism $\Phi:[E]^{<\omega} \to 2$ such that:
\begin{enumerate}[(i)]
\item $\Phi(f)=n_f$, for every $f \in \mathcal{F}$;
\item for every $\xi \in I$, $\left\{k \in \omega: \left(\Phi(g_{\xi}^0(k)),...,\Phi(g_{\xi}^{n-1}(k))\right) = \left(\Phi(d_{\xi}^0),...\Phi(d_{\xi}^{n-1})\right)\right\} \in p_{\xi}$;
\item $\{k \in \omega: (\Phi(y_k^0),...,\Phi(y_k^{n}))= (\alpha_0,....,\alpha_n)\}$ is finite.
 \end{enumerate}  
     \end{Lema}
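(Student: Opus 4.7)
The plan is to adapt the proof of Lemma \ref{completo}, first reducing to a ``singleton case'' via an isomorphism trick, and then performing an inductive construction whose crucial step uses Lemma \ref{novo}. Concretely, since $|E \setminus \bigcup \{y_k^j: k \in \omega,\, j \leq n\}| = \omega$, pick pairwise distinct elements $\{z_k^j: k \in \omega,\, j \leq n\} \subset E \setminus \bigcup\{y_k^j\}$. Extend $\{y_k^j\}$ to a basis $\mathcal{B}$ of $[E]^{<\omega}$ and $\{\{z_k^j\}\}$ to a basis $\mathcal{C}$ of $[E]^{<\omega}$; define an isomorphism $\theta: [E]^{<\omega} \to [E]^{<\omega}$ sending $y_k^j \mapsto \{z_k^j\}$ and extending by any bijection $\mathcal{B} \setminus \{y_k^j\} \to \mathcal{C} \setminus \{\{z_k^j\}\}$. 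Setting $h_\xi^j \doteq \theta \circ g_\xi^j$, $\overline{d}_\xi^j \doteq \theta(d_\xi^j)$, and $\overline{\mathcal{F}} \doteq \theta(\mathcal{F})$, it suffices to produce a homomorphism $\overline{\Phi}$ satisfying the analogous conditions with $\{z_k^j\}$ replacing $\{y_k^j\}$; then $\Phi \doteq \overline{\Phi} \circ \theta$ works.

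Assuming $n \geq 1$ (the case $n = 0$ is immediate, as there are no $g$-constraints), apply Lemma \ref{ufselimcom} to the data $h_\xi$, with $F \doteq \bigcup \overline{\mathcal{F}}$, using the footnote's arrangement with $K = n + 1$ and the enumeration $y_q \doteq z_k^j$ for $q = (n+1)k + j$. This produces sequences $\{b_m\}$, a surjection $r: \omega \to I$, and a filtration $\{E_m\}$ with the property that, for each $m$, all $n+1$ singletons $\{z_k^j\}$ (for fixed $k$) enter $E_m$ at the same step; in particular, $K_{m+1} \doteq \{k: z_k^j \in E_{m+1} \setminus E_m\}$ is independent of $j$ and finite. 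Recursively define homomorphisms $\Phi_m: [E_m]^{<\omega} \to 2$ extending each other, with $\Phi_0(\theta(f)) = n_f$ for $f \in \mathcal{F}$, such that at each step $m+1$: $\Phi_{m+1}(h_{r(m)}^j(b_m)) = \Phi_m(\overline{d}_{r(m)}^j)$ for $j < n$, and $(\Phi_{m+1}(\{z_k^0\}), \ldots, \Phi_{m+1}(\{z_k^n\})) \neq (\alpha_0, \ldots, \alpha_n)$ for every $k \in K_{m+1}$.

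The main obstacle is the inductive step at $m+1$. Here I would apply Lemma \ref{novo} with $A \doteq \{h_{r(m)}^j(b_m): j < n\}$, $C \doteq \{\{\mu\}: \mu \in E_m\}$, and $B \doteq \{\{z\}: z \in E_{m+1} \setminus E_m\}$. By item f) of Lemma \ref{ufselimcom} applied to $h_\xi$ (which have linearly independent ranges because $\theta$ is an isomorphism), $A \cup C$ is linearly independent; $B \cup C$ is trivially so. Lemma \ref{novo} then yields $B' \subset B$ with $|B'| \leq |A| = n$ such that $A \cup C \cup (B \setminus B')$ is linearly independent. Since $|B'| \leq n < n + 1$, for each $k \in K_{m+1}$ there is some $j_k \leq n$ with $\{z_k^{j_k}\} \in B \setminus B'$. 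Define $\Phi_{m+1}$ by extending $\Phi_m$, setting $\Phi_{m+1}(h_{r(m)}^j(b_m)) = \Phi_m(\overline{d}_{r(m)}^j)$ for $j < n$, $\Phi_{m+1}(\{z_k^{j_k}\}) = 1 - \alpha_{j_k}$ for $k \in K_{m+1}$ (consistent by the linear independence above), and arbitrarily on the remaining basis elements of $[E_{m+1}]^{<\omega}$. Finally, $\overline{\Phi} \doteq \bigcup_m \Phi_m$ satisfies the analogues of (i)--(iii): (ii) via item g) of Lemma \ref{ufselimcom} ($\{b_m: m \in r^{-1}(\xi)\} \in p_\xi$), and (iii) because the ``bad tuple'' occurs only for $k$ with $z_k^j \in E_0$, a finite set. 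The heart of the argument is the one-element gap between the $n+1$ singletons per $k$ and the $n$ linear $g$-constraints, which via Lemma \ref{novo} provides exactly the degree of freedom needed to enforce the non-equality of the tuple with $(\alpha_0, \ldots, \alpha_n)$ for every $k \in K_{m+1}$ simultaneously.
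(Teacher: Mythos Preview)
Your proof is correct and follows essentially the same approach as the paper: the paper also reduces the general case to the singleton case via an isomorphism $\theta$ and then carries out the inductive construction using Lemma~\ref{ufselimcom} (with the $K=n+1$ arrangement of item~h)) together with Lemma~\ref{novo} at each step to free up one coordinate $j_k$ per block. The only cosmetic differences are that the paper presents the singleton case first and then the reduction, chooses the $z_k^j$'s as a partition of an enumeration of $\bigcup\{y_k^j\}$ rather than outside it, and takes the smaller set $B=\{\{z_k^j\}: z_k^0\in E_{l+1}\setminus E_l,\ j\le n\}$ in the application of Lemma~\ref{novo}; none of these affect the argument.
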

     \begin{proof} We split the proof in two cases.
     
     \textbf{Case 1:} Suppose that each $y_k^j$ is a singleton, that is, $y_k^j=\{z_k^j\}$, for some $z_k^j \in E$, for every $j \leq n$ and $k \in \omega$.
     ~\\
     
     In this case, we apply Lemma \ref{ufselimcom} using the elements of the statement, $F \doteq \bigcup \mathcal{F}$, $k_{\xi}=n$ for each $\xi \in I$, and the following sequence $w: \omega \to E$ in item h): for each $m \in \omega$, write $m=(n+1)q+j$ for the unique $q \in \omega$ and $j \in (n+1)$, and put $w_{m}= z_q^j$. Thus, we obtain $\{b_i: i \in \omega\}$, $r: \omega \to I$ and $\{E_m:m \in \omega\} \subset [E]^{<\omega}$ satisfying a)-h) of this lemma. 
     
     We shall again define auxiliary homomorphisms $\Phi_m: [E_m]^{<\omega} \to 2$, for each $m \in \omega$, inductively. First, define $\Phi_0:[E_0]^{<\omega} \to 2$ so that $\Phi_0(f)=n_f$, for each $f \in \mathcal{F}$. Suppose that, for $l \in \omega$, we have defined $\Phi_m:[E_m]^{<\omega} \to 2$, for each $m=0,...,l$, satisfying that:
     \begin{enumerate}[(1)]
         \item $\Phi_{m+1}$ extends $\Phi_m$, for each $0 \leq m <l$;
         \item for every $0 \leq m <l$,
         \[
         \big(\Phi_{m+1}(g_{r(m)}^0(b_m)),...,\Phi_{m+1}(g_{r(m)}^{n-1}(b_m))\big) = \big(\Phi_m(d_{r(m)}^0),..., \Phi_m(d_{r(m)}^{n-1}) \big);
         \]
         \item $\big(\Phi_m(\{z_k^0\}),...\Phi_m(\{z_k^n\})\big) \neq \big(\alpha_0,...,\alpha_n\big)$ for each $0<m \leq l$ and $k \in \omega$ so that $z_k^0 \in E_m \setminus E_{m-1}$\footnotemark.\footnotetext{Recall that, by construction, given $k, m \in \omega$, $z_k^j \in E_m$ for \textbf{some} $0 \leq j \leq n$ if, and only if, $z_k^j \in E_m$ for \textbf{every} $0 \leq j \leq n$.} 
     \end{enumerate}
     
     Now, since by construction $\{g_{r(l)}^j(b_l): j<n\} \cup \{\{\mu\}: \mu \in E_l\} $ is linearly independent, we may apply Lemma \ref{novo} with $A \doteq \{g_{r(l)}^j(b_l): j<n\}$, $B \doteq \{\{z_k^j\}: z_k^0 \in E_{l+1}, \setminus E_l, \ j \leq n\}$ and $C \doteq \{\{\mu\}: \mu \in E_l\}$ to obtain a subset $B' \subset B$ such that $|B'| \leq |A| = n$ and 
     \[
     \{g_{r(l)}^j(b_l): j<n\} \cup \{\{\mu\}: \mu \in E_l\} \cup (\{\{z_k^j\}: z_k^0 \in E_{l+1} \setminus E_l, \ j \leq n\} \setminus B')
     \]
     is linearly independent. Then, for each $k \in \omega$ so that $z_k^0 \in E_{l+1} \setminus E_l$, there exists $0 \leq j^k \leq n$ such that $z_k^{j^k} \in (\{\{z_k^{j}\}: z_k^0 \in E_{l+1} \setminus E_l, \ j \leq n\} \setminus B')$. Thus, we may define $\Phi_{l+1}:[E_{l+1}]^{<\omega} \to 2$ extending $\Phi_l$ so that
     \[
     \big(\Phi_{l+1}(g_{r(l)}^0(b_l)),..., \Phi_{l+1}(g_{r(l)}^{n-1}(b_l))\big) = \big(\Phi_{l}(d^0_{r(l)}),...,\Phi_{l}(d_{r(l)}^{n-1})\big)
     \]
     and
     \[
     \Phi_{l+1}(\{z_k^{j^k}\}) \neq \alpha_{j^k},
     \]
     for every $k \in \omega$ so that $z_k^{0} \in E_{l+1} \setminus E_l $. Similarly to the proof of Lemma \ref{lemaunitario}, we have that $\Phi_0,...,\Phi_{l+1}$ also satisfy (1)-(3), and therefore there exists a sequence $(\Phi_m)_{m \in \omega}$ of homomorphisms $\Phi_m:[E_m]^{<\omega} \to 2$ satisfying such properties. Again, the homomorphism defined by $\Phi \doteq \bigcup_{n \in \omega} \Phi_n : [E]^{<\omega} \to 2$ satisfies the hypothesis we want.
    \vspace{0.4 cm} 

\textbf{Case 2:} The general case. There is no restriction on elements $y_k^j$. 

Let $\{z_k: k \in \omega\}$ be an enumeration of $\bigcup \{y_k^j: k \in \omega, j \leq n\}$ and $\{z_k^0: k \in \omega\}, ... , \{z_k^n: k \in \omega\}$ be a partition of $\{z_k: k \in \omega\}$. We extend $\{y_k^j: k \in \omega, j \leq n\}$ to a basis $\mathcal{B}$ of $[E]^{<\omega}$ and also  $\{\{z_k^j\}: k \in \omega, j \leq n\}$ to a basis $\mathcal{C}$ of $[E]^{<\omega}$. By assumption, $|\mathcal{B} \setminus \{\{z_k^j\}: k \in \omega, j \leq n\}| = |\mathcal{C} \setminus \{y_k^j: k \in \omega, j \leq n\}|= \omega$, thus consider enumerations $\{e_l: l \in \omega \}$ of $\mathcal{B} \setminus \{\{z_k^j\}: k \in \omega, j \leq n\}$ and $\{f_l: l \in \omega \}$ of $\mathcal{C} \setminus \{y_k^j: k \in \omega, j \leq n\}$.
     
     Let $\theta:[E]^{<\omega} \to [E]^{<\omega}$ be the isomorphism defined by:
     \[
     \theta(y_k^j) = \{z_k^j\},
     \]
     for every $k \in \omega$ and $j \leq n$, and
     \[
     \theta(f_l)=e_l,
     \]
     for every $l \in \omega$.
     
     Let also, for each $\xi \in I$, $h_{\xi}:\omega \to ([E]^{<\omega})^n$ given by $h_{\xi}^i(m) = \theta(g_{\xi}^i(m))$, for every $m \in \omega$ and $i < n$, and $\overline{d_{\xi}} \in ([E]^{<\omega})^n$ given by $\overline{d_{\xi}}^i =\theta(d_{\xi}^i)$, for every $i <n$. By the previous case, there exists a homomorphism $\tilde{\Phi}:[E]^{<\omega} \to 2$ so that:
     \begin{enumerate}[(i)]
         \item $\tilde{\Phi}(\theta(f)) = n_f$, for each $f \in \mathcal{F}$;
         \item For every $\xi \in I$, $\left\{m \in \omega: \left(\overline{\Phi}(h_{\xi}^0(m)),...,\overline{\Phi}(h_{\xi}^{n-1}(m))\right) = \left(\overline{\Phi}(\overline{d}_{\xi}^0),...\overline{\Phi}(\overline{d}_{\xi}^{n-1})\right)\right\} \in p_{\xi}$;
         
         \item $\{k \in \omega: (\overline{\Phi}(\{z^0_k\}),..., \overline{\Phi}(\{z^n_k\}))=(\alpha_0,...,\alpha_n)\} \text{ is finite.}$
     \end{enumerate}
     
     Thus, the homomorphism $\Phi \doteq \overline{\Phi} \circ \theta:[E]^{<\omega} \to 2$ satisfies the hypothesis we want.

      \end{proof}

\section{A consistent solution to the case $\alpha= \omega$ of the Comfort-like question for countably pracompact groups}

\begin{Teo}\label{casoomega}
Suppose that there are $\mathfrak{c}$ incomparable selective ultrafilters. Then there exists a (Hausdorff) group $G$ which has all finite powers countably pracompact and such that $G^{\omega}$ is not countably pracompact. 
\end{Teo}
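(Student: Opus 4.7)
The plan is to construct $G$ as a Hausdorff Boolean topological group of cardinality $\mathfrak{c}$, realized as $[E]^{<\omega}$ for a set $E \subset 2^{\mathfrak{c}}$ with $|E| = \mathfrak{c}$, topologized by a family $\{\Phi_\alpha : \alpha < \mathfrak{c}\}$ of homomorphisms $[E]^{<\omega} \to 2$ built by a $\mathfrak{c}$-length transfinite recursion using Lemmas \ref{completo} and \ref{caso2.2}. Fix $\{p_\xi : \xi < \mathfrak{c}\}$ to be the $\mathfrak{c}$ incomparable selective ultrafilters provided by the hypothesis. The dense subset witnessing countable pracompactness of $G^n$ will be $D_n \subset G^n$, the tuples whose coordinates are linearly independent in $[E]^{<\omega}$.

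At each stage $\alpha < \mathfrak{c}$ I handle one task drawn from two interleaved enumerations. \emph{Positive tasks}: for each $n \geq 1$ and each sequence $(\vec{g}_k)_k \subset ([E]^{<\omega})^n$ in the canonical form provided by Lemma \ref{bastali} (a linearly independent $y$-part plus constant translates), choose a matching $p_\xi$ and a target accumulation point $\vec{x} \in G^n$, and use condition (ii) of Lemma \ref{completo} to arrange that the new $\Phi_\alpha$ is compatible with the assertion $\vec{x} = p_\xi\text{-}\lim_k \vec{g}_k$. \emph{Negative tasks}: once the open sets $\{U_k^j : k,j \in \omega\}$ of Lemma \ref{abertosli} are fixed, enumerate all candidates $\vec{x} \in G^\omega$ (there are $|G|^\omega = \mathfrak{c}$ of them) and, via condition (iii) of Lemma \ref{completo} — or its strengthening recorded in Remark \ref{RemLema} — enforce at some stage a constraint making $\vec{x}$ unable to serve as an accumulation point of any sequence $(\vec{d}_k)_k$ with $d_k^j \in U_k^j$ for $j<k$. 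Extra stages are reserved to guarantee Hausdorffness, by separating each nonzero element of $[E]^{<\omega}$ via some $\Phi_\alpha$.

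Verification proceeds in two parts. For countable pracompactness of $G^n$: $D_n$ is dense because each basic neighborhood in $G^n$ is a product of cosets of finite-index subgroups of $G$, from which one may always extract an $n$-tuple of linearly independent representatives; and any sequence in $D_n$ admits, by Lemma \ref{bastali}, a canonical-form subsequence that corresponds to some positive task, hence acquires a $p_\xi$-accumulation point in $G^n$. For failure of $G^\omega$: given any dense $D \subset G^\omega$, density lets us pick $\vec{d}_k \in D \cap (U_k^0 \times \cdots \times U_k^{k-1} \times G^{\omega\setminus k})$ for each $k$; by Lemma \ref{abertosli} the set $\{d_k^j : k>j\}$ is linearly independent for each $j$, and for every candidate $\vec{x} \in G^\omega$ the negative obligation processed at the corresponding stage ensures $\vec{x}$ is not a $p$-limit of $(\vec{d}_k)_k$ for any $p \in \omega^*$.

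The principal obstacle is the compatibility of positive and negative tasks: each time a new positive $p_\xi$-limit is introduced, the freshly constructed $\Phi_\alpha$ could, in principle, resurrect a previously prohibited pattern on some $\vec{x} \in G^\omega$. The twin conditions (ii) and (iii) of Lemma \ref{completo}, together with Remark \ref{RemLema}, are exactly the device needed to package both requirements into a single homomorphism — condition (ii) producing the required $p_\xi$-limit and condition (iii) breaking the forbidden pattern almost everywhere. Orchestrating these constraints across all $\mathfrak{c}$ stages, with a carefully nested choice of countable contexts $E_\alpha \subset E$ containing all previously relevant data and the right active negative obligation to charge against $\Phi_\alpha$, is where the argument becomes genuinely delicate.
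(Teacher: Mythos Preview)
Your sketch does not yet close the central gap, and in one place it misreads what Lemma~\ref{completo} delivers. Condition~(iii) of that lemma (and its variant in Remark~\ref{RemLema}) kills a \emph{single, fixed} pair of sequences $(y^0_n),(y^1_n)$ against one pattern $(\alpha_0,\alpha_1)$. Your negative task, however, asks for a homomorphism $\Phi_\alpha$ that prevents a given $\vec{x}\in G^\omega$ from being an accumulation point of \emph{every} sequence $(\vec d_k)$ with $d_k^j\in U_k^j$; the elements $d_k^j$ range freely over nonempty open sets, so there is no specific pair of sequences to feed into (iii). A single homomorphism cannot control the values $\Phi_\alpha(d_k^j)$ for all admissible choices of $d_k^j$ simultaneously, and nothing in your outline explains how this quantifier is discharged. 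There is also a tension you acknowledge but do not resolve: your dense witness $D_n$ consists of \emph{all} linearly independent $n$-tuples, so the finite projections of any bad sequence $(\vec d_k)$ lie in $D_n$ and are themselves positive tasks. Your positive obligations thus force accumulation points for exactly the finite projections whose assembly you are trying to prevent in $G^\omega$, and the proposal offers no mechanism (beyond a hopeful reference to ``incomparable $p_\xi$'') for why these cannot cohere into a limit in $G^\omega$.

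The paper's proof is organized quite differently and sidesteps both issues. It does \emph{not} run a task-by-task recursion balancing positive against negative obligations. Instead it partitions $\mathfrak{c}$ into blocks $X_n$ ($n>0$), takes the dense witness for $G^n$ to be $([X_n]^{<\omega})^n$ (not all linearly independent tuples), and hard-wires the required $p_\xi$-limits into the very definition of the generating family $\mathcal{A}$ via the functions $f_\xi$ and the ``suitably closed'' condition. The negative direction is then proved \emph{after} the group is built: given any dense $Y\subset G^\omega$ and any candidate accumulation point, one exhibits a suitable $\sigma\in\mathcal{A}$ separating them, using a case analysis on the supports $\mathrm{SUPP}(x_k^j)$ relative to the partition $(X_n)$. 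The partition is what makes this work: the positive $p_\xi$-limit constraints live inside individual blocks $[X_n]^{<\omega}$, so homomorphisms can be defined freely on other blocks (or via Lemma~\ref{caso2.2} within a block, exploiting that only $n$ simultaneous limit constraints are imposed there while the bad sequence contributes $n{+}1$ independent coordinates). Your scheme has no analogue of this decoupling, and without it the compatibility problem you flag in your last paragraph is, as far as I can see, genuinely unresolved rather than merely ``delicate''.
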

\begin{proof}
The required group will be constructed giving a suitable topology to the Boolean group $[\mathfrak{c}]^{<\omega}$, as follows.

Let $(X_n)_{n > 0}$ be a partition of $\mathfrak{c}$ so that $|X_n| = \mathfrak{c}$ for every $n > 0$. For each $n > 0$, let $(X_n^j)_{j <2}$ be a partition of $X_n$ so that
\begin{itemize}
    \item $|X_n^0|=|X_n^1| = \mathfrak{c}$;
    \item $X_n^0$ contains only limit ordinals and their next $\omega$ elements;
    \item the initial $\omega$ elements of $X_n$ are in $X_n^1$.
\end{itemize}

For every $n > 0$, let also
\[Y_n^0 \doteq \{\xi \in X_n^0: \xi \text{ is a limit ordinal}\},\]
and define the sets $X_0 \doteq \bigcupdot_{n \in \omega} X_n^0$, $X_1 \doteq \bigcupdot_{n \in \omega} X_n^1$ and  $Y_0 \doteq \bigcupdot_{n \in \omega} Y_n^0$. Now, consider a family of functions $\{f_{\xi}: \xi \in Y_0\}$ so that: 
\begin{enumerate}[1)]
  \item for each $n >0,$ $\{f_{\xi}: \xi \in Y_n^0\}$ is an  enumeration of all the sequences $(x_k)_{k \in \omega}$ of elements in $([X_n]^{<\omega})^n$ so that $\{x_k^j: k \in \omega, j<n\}$ is linearly independent;
   \item  given $n > 0$ and $\xi \in Y_n^0$, $f_{\xi}$ is a function from $\omega$ to $([X_n]^{<\omega})^n$ such that $\bigcup_{j<n} \bigcup_{k \in \omega} f_{\xi}^j(k) \subset \xi$.
\end{enumerate}
Finally, let $\{p_{\xi}: \xi \in Y_0\}$ be a family of incomparable selective ultrafilters, which exists by hypothesis.

Countable subsets of $\mathfrak{c}$ which have a suitable property of closure related to this construction will be called \textit{suitably closed}\footnotemark:
\footnotetext{The idea of suitably closed sets already appeared in \cite{koszmider&tomita&watson}, without using a name. Many subsequent works that used Martin's Axiom for countable posets and selective ultrafilters also used this idea. The name \textit{suitably closed} appeared firstly in \cite{Michael}.}

\begin{Def}
A set $A \in [\mathfrak{c}]^{\omega}$ is \textit{suitably closed} if, for each $n > 0$ and $\xi \in Y_n^0$ so that $\{\xi+j: j <n \} \cap  A \neq \emptyset$, we have that 
\[\{\xi+j: j < n\} \cup \bigcup_{j<n} \bigcup_{k \in \omega} f_{\xi}^j(k) \subset A.\]
\end{Def}

Let $\mathcal{A}$ be the set of all homomorphisms $\sigma: [A]^{< \omega} \to 2$, with $A \in [\mathfrak{c}]^{\omega}$ suitably closed, satisfying that, for every  $n >0$ and $\xi \in A \cap Y_n^0$,
\[
\sigma(\{\xi + j\}) = p_{\xi}-\lim_{k \in \omega} \sigma(f_{\xi}^j(k)),
\]
for each $j<n$.

Enumerate $\mathcal{A}$ by $\{\sigma_{\mu}: \omega \leq \mu < \mathfrak{c}\}$ and, without loss of generality, we may assume that  $\bigcup \text{dom}(\sigma_{\mu}) \subset \mu$, for each $\mu \in [\omega, \mathfrak{c})$. In what follows, we will construct suitable homomorphisms $\overline{\sigma_{\mu}}:[\mathfrak{c}]^{< \omega} \to 2$, for every $\mu \in [\omega, \mathfrak{c})$. Note that it is enough to define $\overline{\sigma_{\mu}}$ in the subset $\{\{\xi\}: \xi \in \mathfrak{c}\}$, since this is a basis for $[\mathfrak{c}]^{<\omega}$.

Firstly, for each $n > 0$, we enumerate all functions $g: S \to 2$ with $S \in [\mathfrak{c}]^{<\omega}$ by $\{g_{\xi}: \xi \in X_n^1 \}$. Without loss of generality, we may assume that $\text{dom}(g_{\xi}) \subset \xi$, for every $\xi \in X_n^1$, and that for each $g :S \to 2$ as above, $|\{\xi \in X_n^1:g_{\xi} = g\}| = \mathfrak{c}$. 

Let $\mu \in [\omega, \mathfrak{c})$. If $\xi < \mathfrak{c}$ is such that $\{\xi\} \in \text{dom}(\sigma_\mu)$, we put $\overline{\sigma}_{\mu}(\{\xi\})= \sigma_{\mu}(\{\xi\})$. Otherwise, we have a few cases to consider:
\begin{enumerate}[1)]
    \item  if $\xi \in X_1$ and $\mu \in \text{dom}(g_{\xi})$, we put $\overline{\sigma}_{\mu}(\{\xi\}) = g_{\xi}(\mu)$;
    \item if $\xi \in X_1$ and $\mu \notin \text{dom}(g_{\xi})$, we put $\overline{\sigma}_{\mu}(\{\xi\})=0$;
    \item for the remaining elements of $X_0$, $\overline{\sigma}_{\mu}$ is defined recursively, by putting
\[
\begin{cases}
\overline{\sigma_{\mu}}(\{\xi+j\}) = p_{\xi}-\lim_{k \in \omega} \overline{\sigma_{\mu}}(f_{\xi}^j(k)) & \text{if }  \xi \in Y_n^0 \text{ and } j <n;\\
        \overline{\sigma}_{\mu}(\{\xi\}) = 0,    & \text{if } \xi \notin \{\alpha + j: \alpha \in Y_n^0, j<n \}.
\end{cases}\]
\end{enumerate}

The definition above uniquely extends each $\sigma_{\mu}$ to a homomorphism $\overline{\sigma_{\mu}}:[\mathfrak{c}]^{<\omega} \to 2$, which satisfies that, for each $n > 0$, $\xi \in Y_n^0$ and $j<n$, 
\begin{equation}\tag{*}
\overline{\sigma_{\mu}}(\{\xi+j\})= p_{\xi}-\lim_{k \in \omega} \overline{\sigma_{\mu}}(f_{\xi}^j(k)).
\end{equation}

Let now $\overline{\mathcal{A}} \doteq \{\overline{\sigma_{\mu}}: \omega \leq \mu < \mathfrak{c}\}$ and $\tau$ be the weakest (group) topology on $[\mathfrak{c}]^{<\omega}$ making every homomorphism in $\overline{\mathcal{A}}$ continuous. We call this group $G$. We claim that $G$ is Hausdorff. Indeed, given $x \in [\mathfrak{c}]^{<\omega} \setminus \{\emptyset\}$, let $A$ be a suitably closed set containing $x$. We may use Corollary \ref{homfraco} with $E = A$, $I=A \cap Y_0$, $\mathcal{F}=\{x\}$ and, for each $n>0$ and $\xi \in Y_n^0 \cap A$, $d_\xi=(\{\xi\},...,\{\xi+n-1\})$, to fix a homomorphism $\sigma:[A]^{<\omega} \to 2$ so that $\sigma \in \mathcal{A}$ and $\sigma(x)=1$. By construction, there exists $\mu \in [\omega, \mathfrak{c})$ so that $\sigma_{\mu}= \sigma$, and hence $\overline{\sigma_{\mu}}(x)=1$.

\begin{Cla}
For every $n > 0$, $G^n$ is countably pracompact.
\end{Cla}
\begin{proof}[Proof of the claim]\renewcommand{\qedsymbol}{$\blacksquare$}
Fix $n > 0$. We claim that $([X_n]^{<\omega})^n \subset G^n$ is a witness to the countable pracompactness property in $G^n$. Indeed, if $U$ is a nonempty open subset of $G$, we may fix a function $g:S \to 2$, with $S \in [\mathfrak{c}]^{<\omega}$, so that
\[
U \supset \bigcap_{\mu \in S} \overline{\sigma_{\mu}}^{^{-1}}(g(\mu)).
\]
Then, by construction, we may choose $\xi \in X_n^1 \cap (\mu, \mathfrak{c})$ so that $g_{\xi}=g$, and thus $\{\xi\} \in U$, which shows that $[X_n]^{<\omega}$ is dense in $G$, and therefore $([X_n]^{<\omega})^n$ is dense in $G^n$.

We shall now prove that every infinite sequence $\{x_k: k \in \omega\}$ of elements in $([X_n]^{<\omega})^{n}$ has an accumulation point in $G^n$. In fact, by Lemma \ref{bastali}, there are:
\begin{itemize}
\item elements $d_0$,..., $d_{n-1} \in [X_n]^{<\omega}$;
\item a subsequence $(x_{k_l})_{l \in \omega}$;
\item for some $0 \leq t \leq n$, a sequence $(y_{l})_{l \in \omega}$ in $([X_n]^{<\omega})^t$
\item for each $0 \leq s < n$, a function $P_s: t \to 2$,
\end{itemize}
satisfying that
 \begin{enumerate}[i)]
      \item $x_{k_l}^s = \Big( \displaystyle \sum_{j=0}^{t-1}P_s(j)y_{l}^j \Big) \triangle d_s,$
      for every $l \in \omega$ and $0 \leq s<n$.
      \item $\{y_{l}^j: l \in \omega, 0 \leq j<t\}$ is linearly independent.
  \end{enumerate}
  By construction, there exists $\xi \in Y_n^0$ so that $f_{\xi}^j(l) = y_l^j$, for every $l \in \omega$ and $0 \leq j <t$. Since
  \[
  \overline{\sigma_{\mu}}(\{\xi+j\})= p_{\xi}-\lim_{l \in \omega} \overline{\sigma_{\mu}}(f_{\xi}^j(l)),
  \]
for each $\mu \in [\omega, \mathfrak{c})$ and $0 \leq j<n$, we conclude that, for each $0 \leq s<n$, 
\[
\Big( \sum_{j=0}^{t-1}P_s(j)\{\xi+j\} \Big)\triangle d_s = p_{\xi}-\lim_{l \in \omega} x_{k_l}^s,
\]
and therefore $\{x_k: k \in \omega\}$ has an accumulation point in $G^n$. \footnotemark \footnotetext{In fact, the accumulation point obtained even belongs to $([X_n]^{<\omega})^n$ itself. This shows that the subgroup $[X_n]^{<\omega}$ has its nth-power countably compact, for each $n >0$.}
\end{proof}

\begin{Cla}\label{Gomega}
$G^{\omega}$ is not countably pracompact. 
\end{Cla}
\begin{proof}[Proof of the claim]\renewcommand{\qedsymbol}{$\blacksquare$}
Let $Y \subset G^{\omega}$ be a dense subset. Consider the set $\{U_k^j: k \in \omega, j \in \omega\}$ of nonempty open subsets of $G$ given by Lemma \ref{abertosli}. For each $k \in \omega$, we may choose an element $x_k \in Y \cap \prod_{j \leq k} U_k^j \times G^{\omega \setminus k+1}$, and hence 
\[
\{x_k^j: j \in \omega, k \geq j\}
\]
is linearly independent. In what follows, we will show that there exists a subsequence of $\{x_k: k \in \omega\}$ which does not have an accumulation point in $G^{\omega}$.

For an element $D \in [\mathfrak{c}]^{<\omega}$, we define 
\[
\text{SUPP}(D) \doteq \{n > 0: D \cap X_n \neq \emptyset\}.
\]
We will split the proof in two cases.
~\\

\textbf{Case 1:} There exists $j \in \omega$ so that $\bigcup_{k \in \omega} \text{SUPP}(x_{k}^{j})$ is infinite.

In this case, we may fix a subsequence $\{x_{k_m}^j: m \in \omega\}$ such that
\begin{equation}\label{isnot}
\text{SUPP}(x_{k_m}^j)  \setminus \left( \bigcup_{p< m} \text{SUPP}(x_{k_p}^j)\right) \neq \emptyset, \end{equation} for every $m \in \omega$. We may also assume that $k_0 \geq j$, and hence $\{x_{k_m}^{j}: m \in \omega\}$ is linearly independent.
~\\

Now we shall show that, for each $x \in G$, $x$ is not an accumulation point of $\{x_{k_m}^j: m \in \omega\}$. First, note that, given $x \in G$, there exists $N_0 \in \omega$ such that, for every $m \geq N_0$,
\[
\text{SUPP}(x_{k_m}^j) \setminus \left( \bigcup_{p<m} \text{SUPP}(x_{k_p}^j) \cup \text{SUPP}(x)   \right) \neq \emptyset.
\]
In fact, since $\text{SUPP}(x)$ is finite and (\ref{isnot}) holds, there cannot be infinitely many elements $x_{k_m}^j$ such that $\text{SUPP}(x_{k_m}^j) \subset \bigcup_{p<m}\text{SUPP}(x_{k_p}^j) \cup \text{SUPP}(x)$. 

Let
\[
F_0 \doteq \bigcup_{p<N_0} \text{SUPP}(x_{k_p}^j) \cup \text{SUPP}(x)
\]
and, for $i>0$,
\[
F_i \doteq \text{SUPP}(x_{k_{N_0+i-1}}^j) \setminus \left(\bigcup_{p< N_0+i-1} \text{SUPP}(x_{k_p}^j) \cup \text{SUPP}(x)  \right).
\]
Define also, for each $i \in \omega$,
\[
D_i \doteq  \Big( \bigcup_{m \in \omega} x_{k_m}^j \cup x \Big) \cap \Big( \bigcup_{n \in F_i}X_n \Big),
\]
and let $A_i$ be a suitably closed set containing $D_i$ such that $A_i \subset \bigcup_{n \in F_i}X_n$. Since $(F_i)_{i \in \omega}$ is a family of pairwise disjoint sets, we have that $(A_i)_{i \in \omega}$ is also a family of pairwise disjoint sets.

Now we may use Corollary \ref{homfraco} with: $E=A_0$; $I = A_0 \cap Y_0$; $\mathcal{F} = \{x\}$; and, for every $n > 0$ and $\xi \in Y_n^0 \cap A_0$, $d_{\xi} = (\{\xi\},...,\{\xi + n-1\})$, to fix a homomorphism $\theta_0 : [A_0]^{<\omega} \to 2$ such that $\theta_0 \in \mathcal{A}$ and $\theta_0(x)=0$ \footnotemark \footnotetext{If $x = \emptyset$, $\mathcal{F}$ is not linearly independent and thus we cannot use Corollary \ref{homfraco}, but it is clear that we can still find such $\theta_0$.}. For $l>0$, suppose that we have constructed a set of homomorphisms $\{\theta_i: i <l\} \subset \mathcal{A}$ such that
\begin{enumerate}[i)]
   \item $\theta_0(x)=0$.
    \item $\theta_i$ is a homomorphism defined in $\big[ \bigcup_{p \leq i} A_p \big]^{<\omega}$ taking values in $2$, for each $i<l$.
    \item $\theta_i$ extends $\theta_{i-1}$ for each $0<i<l$.
    \item $\theta_{i}(x_{k_{N_0+p}}^j)=1$ for each $0<i<l$ and $p=0,...,i-1$.
\end{enumerate}

Again by Corollary \ref{homfraco}, we may define a homomorphism $\psi_l:[A_l]^{<\omega} \to 2$ so that $\psi_l \in \mathcal{A}$ and 
\[\psi_{l}\Big(x_{k_{N_0+l-1}}^j \setminus \bigcup_{p < l} D_p \Big) + \theta_{l-1}\Big(x_{k_{N_0+l-1}}^j \cap \bigcup_{p < l} D_p \Big) = 1.\]
Now, since $A_l \cap \bigcup_{i < l} A_i = \emptyset$, we may also define a homomorphism $\theta_l : \big[\bigcup_{p \leq l} A_p \big]^{<\omega} \to 2$ extending both $\theta_{l-1}$ and $\psi_l$. By construction, we have that $\theta_l(x)=0$ and $\theta_l(x_{k_{N_0+p}}^j)=1$ for every $p=0,...,l-1$. Also, it follows that $\theta_l \in \mathcal{A}$, since $\psi_l \in \mathcal{A}$ and $\theta_i \in \mathcal{A}$ for every $i<l$. Therefore, there exists a family of homomorphisms $\{\theta_l: l \in \omega\} \subset \mathcal{A}$ satisfying i)-iv) for every $l \in \omega$. 

Letting $A \doteq \bigcup_{i \in \omega} A_i$ and $\theta \doteq \bigcup_{i \in \omega} \theta_i$, the homomorphism $\theta:[A]^{<\omega} \to 2$ satisfies that $\theta \in \mathcal{A}$, since $\theta_i \in \mathcal{A}$ for each $i \in \omega$. Also, $\theta(x)=0$ and $\theta(x_{k_{N_0+p}}^j)=1$ for every $p \in \omega$. By construction, there exists $\mu \in [\omega, \mathfrak{c})$ so that $\theta = \sigma_{\mu}$, thus $\overline{\sigma_\mu}: [\mathfrak{c}]^{<\omega} \to 2$ satisfies that $\overline{\sigma_{\mu}}(x_{k_m}^j)=1$ for each $m \geq N_0$, and $\overline{\sigma_{\mu}}(x)=0$. Hence, the element $x \in G$, which was chosen arbitrarily, is not an accumulation point of $\{x_{k_m}^j: m \in \omega\}$. In particular, $\{x_{k_m}: m \in \omega\}$ does not have an accumulation point in $G^{\omega}$.
~\\~\\
\textbf{Case 2:} For every $j \in \omega$, $M_j \doteq \bigcup_{k \in \omega} \text{SUPP}(x^j_{k})$ is finite.

In this case, we claim that for each $j \in \omega$ there exists a subsequence $\{k_m^j: m \in \omega\}$ so that, for every $i \leq j$ and $n \in M_i$, either the family $\{x_{k_m^j}^i \cap X_n: m \in \omega\}$ is linearly independent or constant. Indeed, for $j=0$ and $n_0 \in M_0$, if there exists an infinite subset of $\{x_k^0 \cap X_{n_0}: k \in \omega\}$ which is linearly independent, we may fix a subsequence $\{k_m^{0,0}:m \in \omega\}$ so that $\{x_{k_m^{0,0}}^{0} \cap X_{n_0}: m \in \omega\}$ is linearly independent; otherwise we may fix a subsequence $\{k_m^{0,0}:m \in \omega\}$ so that $\{x_{k_m^{0,0}}^{0} \cap X_{n_0}: m \in \omega\}$ is constant. Then, if it exists, we may consider another $n_1 \in M_0$ and repeat the process to obtain a subsequence $\{k_m^{0,1}: m \in \omega\}$ which refines $\{k_m^{0,0}:m \in \omega\}$ and satisfies the desired property for $n_0$ and $n_1$. Since $M_0$ is finite, proceeding inductively we may obtain the required subsequence $\{k_m^0: m \in \omega\}$ in the last step. Then, we repeat the process for the next coordinates, always refining the previous subsequence. Now, fix such subsequences $\{k_m^j: m \in \omega\}$, for each $j \in \omega$. We may also suppose that $k_0^j \geq j$ for each $j \in \omega$.

For each $j \in \omega$, let 
\[
\overline{M_j} \doteq \{n \in M_j: \{x_{k_m^j}^j \cap X_n: m \in \omega\} \text{ is linearly independent}\}.
\]
Note that $\overline{M_j} \neq \emptyset$ for every $j \in \omega$, since $\{x_{k_m^j}^{j} \cap X_n: m \in \omega, n \in M_j\}$ generates all the elements in the infinite linearly independent set $\{x_{k_m^j}^{j}: m \in \omega\}$.

Suppose that there exists $j \in \omega$ so that $|\overline{M_j}| > 1$. Fix then $n_0, n_1 \in \overline{M_j}$ distinct. We shall prove that in this case $\{x_{k_m^j}^j: m \in \omega\}$ does not have an accumulation point in $G$.

For that, consider:
\begin{itemize}
    \item $x \in G$ chosen arbitrarily;
     \item $x^0 \doteq x \cap X_{n_0}$, $x^1 \doteq x \cap X_{n_1}$;
    \item $Z_0 \subset X_{n_0}$ a suitably closed set containing $x^0$ and $\bigcup \{x_{k_m^j}^{j} \cap X_{n_0}: m \in \omega\}$, so that $|Z_0 \setminus \bigcup\{x_{k_m^j}^{j} \cap X_{n_0}: m \in \omega\}|= \omega$;
    \item  $Z_1 \subset X_{n_1}$ a suitably closed set containing $x^1$ and $\bigcup \{x_{k_m^j}^{j} \cap X_{n_1}: m \in \omega\}$, so that $|Z_1 \setminus \bigcup\{x_{k_m^j}^{j} \cap X_{n_1}: m \in \omega\}|= \omega$;
    \item $\tilde{E} \doteq Z_0 \cupdot Z_1$;
    \item $I_0 \doteq Z_0 \cap Y_0 (=Z_0 \cap Y_{n_0}^0)$, $I_1 \doteq Z_1 \cap Y_0 (= Z_1 \cap Y_{n_1}^0)$ and $I \doteq I_0 \cupdot I_1$;
    \item for $\xi \in I $, 
 \[
d_{\xi} =
 \begin{cases}
   (\{\xi\},...,\{\xi+n_0-1\}), & \text{if} \ \xi \in I_0\\
   (\{\xi\},...,\{\xi+n_1-1\}), & \text{if} \ \xi \in I_1.
\end{cases}
\]
\end{itemize} 

By Lemma \ref{completo} and Remark \ref{RemLema}, there exists a homomorphism $\tilde{\Phi}:[\tilde{E}]^{<\omega} \to 2$ such that:
\begin{enumerate}[(i)]
\item for every $s \in x^0 \cup x^1$, $\tilde{\Phi}(\{s\})=0$;
\item for every $\xi \in I$, 
 \[
\tilde{\Phi}(\{\xi+j\}) = \begin{cases}
  p_{\xi}-\lim_{k \in \omega}\tilde{\Phi}(f_{\xi}^j(k)), \text{ for every } j<n_0, &\text{if} \ \xi \in I_0\\
  p_{\xi}-\lim_{k \in \omega}\tilde{\Phi}(f_{\xi}^j(k)), \text{ for every } j<n_1, &\text{if} \ \xi \in I_1;
\end{cases}\]

\item $\Big\{m \in \omega: \tilde{\Phi}\Big(x_{k_m^j}^j \cap (X_{n_0} \cup X_{n_1}) \Big) = 0\Big\}$ is finite.
 \end{enumerate}  

Now, fix a suitably closed set $E$ containing $\tilde{E}$, $x$ and $x_{k_m^j}^j$, for each $m \in \omega$, so that $E \cap X_{n_0} = Z_0$ and $E \cap X_{n_1} = Z_1$. Consider the homomorphism $\Phi:[E]^{<\omega} \to 2$ so that, for each $\xi \in E$,
\[
\Phi(\{\xi\}) =
 \begin{cases}
    \tilde{\Phi}(\{\xi\}), & \text{if} \ \xi \in \tilde{E}\\
   0, & \text{if }\xi \notin \tilde{E}.
\end{cases}
\]
In particular, for every $z \in [E]^{<\omega}$ so that $z \cap (X_{n_0} \cup X_{n_1}) = \emptyset$, we have that $\Phi(z)=0$, and for every $z \in [\tilde{E}]^{<\omega}$, $\Phi(z)= \tilde{\Phi}(z)$.

It follows by construction that $\Phi \in \mathcal{A}$. Furthermore, 
\begin{align*}
\Phi(x) &= \Phi\Big( \big(x \cap (X_{n_0} \cup X_{n_1}) \big) \triangle \big(x \setminus (X_{n_0} \cup X_{n_1} ) \big) \Big) \\ &= \Phi\Big( (x \cap X_{n_0}) \triangle (x \cap X_{n_1}) \Big) + \Phi\Big(x \setminus (X_{n_0} \cup X_{n_1} )\Big) = \tilde{\Phi}(x^0) + \tilde{\Phi}(x^1) = 0,
\end{align*}
and, for every $m \in \omega$,
\begin{align*}
\Phi(x_{k_m^j}^j) &= \Phi\Big( \big(x_{k_m^j}^j \cap (X_{n_0} \cup X_{n_1}) \big) \triangle \big(x_{k_m^j}^j \setminus (X_{n_0} \cup X_{n_1} ) \big) \Big) \\ &= \Phi\Big( x_{k_m^j}^j \cap (X_{n_0} \cup X_{n_1}) \Big) + \Phi\Big(x_{k_m^j}^j \setminus (X_{n_0} \cup X_{n_1} )\Big) = \tilde{\Phi}\Big( x_{k_m^j}^j \cap (X_{n_0} \cup X_{n_1}) \Big).
\end{align*}

Thus, 
\[
\Big\{ m \in \omega: \Phi(x_{k_m^j}^j) = \Phi(x)  \Big\}
\]
is finite. Since, by construction, there exists $\mu \in [\omega, \mathfrak{c})$ so that $\Phi = \sigma_{\mu}$, we conclude that $x$ cannot be an accumulation point of $\{x_{k_m^j}^j: m \in \omega\}$. As the element $x \in G$ was chosen arbitrarily, the sequence $\{x_{k_m^j}^j: m \in \omega\}$ does not have an accumulation point in $G$. In particular, $\{x_{k_m^j}: m \in \omega\}$ does not have an accumulation point in $G^{\omega}$.

Therefore, henceforth we may suppose that $|\overline{M_j}|=1$ for every $j \in \omega$. We have two subcases to consider.

\textbf{Case 2.1:} There are $j_0, j_1 \in \omega$ distinct so that $\overline{M_{j_0}} \cap \overline{M_{j_1}} = \emptyset$.

Suppose that $j_1 >j_0$, and let $n_0 \in \overline{M_{j_0}}, n_1 \in \overline{M_{j_1}}$. We shall show that the sequence $\{(x_{k_m^{j_1}}^{j_0},x_{k_m^{j_1}}^{j_1}): m \in \omega\}$ does not have an accumulation point in $G^2$. For this, consider:
\begin{itemize}
    \item $(x^0,x^1) \in G^2$ chosen arbitrarily;
     \item $y^0 \doteq x_{{k_0^{j_1}}}^{j_1} \cap X_{n_{0}}$ and $y^1 \doteq x_{{k_0^{j_1}}}^{j_0} \cap X_{n_{1}}$\footnotemark;\footnotetext{Recall that, by construction, the families $\{x_{{k_m^{j_1}}}^{j_1} \cap X_{n_{0}}: m \in \omega\}$ and $\{x_{{k_m^{j_1}}}^{j_0} \cap X_{n_1}: m \in \omega\}$ are constant.}
    \item $Z_0 \subset X_{n_0}$ a suitably closed set containing $(x^0\cup x^1) \cap X_{n_0}$, $y^0$ and $\bigcup \{x_{k_m^{j_1}}^{j_0} \cap X_{n_0}: m \in \omega\}$, so that $|Z_0 \setminus \bigcup\{x_{k_m^{j_1}}^{j_0} \cap X_{n_0}: m \in \omega\}|= \omega$;
    \item  $Z_1 \subset X_{n_1}$ a suitably closed set containing $(x^0 \cup x^1) \cap X_{n_1}$, $y^1$ and $\bigcup \{x_{k_m^{j_1}}^{j_1} \cap X_{n_1}: m \in \omega\}$, so that $|Z_1 \setminus \bigcup\{x_{k_m^{j_1}}^{j_1} \cap X_{n_1}: m \in \omega\}|= \omega$;
    \item $\tilde{E} \doteq Z_0 \cupdot Z_1$;
    \item $I_0 \doteq Z_0 \cap Y_0 (=Z_0 \cap Y_{n_0}^0)$, $I_1 \doteq Z_1 \cap Y_0 (= Z_1 \cap Y_{n_1}^0)$ and $I \doteq I_0 \cupdot I_1$;
    \item for $\xi \in I $, 
 \[
d_{\xi} =
 \begin{cases}
   (\{\xi\},...,\{\xi+n_0-1\}), & \text{if} \ \xi \in I_0\\
   (\{\xi\},...,\{\xi+n_1-1\}), & \text{if} \ \xi \in I_1.
\end{cases}
\]
\end{itemize} 

By Lemma \ref{completo}, there exists a homomorphism $\tilde{\Phi}:[\tilde{E}]^{<\omega} \to 2$ such that:
\begin{enumerate}[(i)]
\item for every $s \in (x^0 \cup x^1 \cup y^0 \cup y^1) \cap (X_{n_0} \cup X_{n_1})$, $\tilde{\Phi}(\{s\})=0$;
\item for every $\xi \in I$, 
 \[
\tilde{\Phi}(\{\xi+j\}) = \begin{cases}
  p_{\xi}-\lim_{k \in \omega}\tilde{\Phi}(f_{\xi}^j(k)), \text{ for every } j<n_0, &\text{if} \ \xi \in I_0\\
  p_{\xi}-\lim_{k \in \omega}\tilde{\Phi}(f_{\xi}^j(k)), \text{ for every } j<n_1, &\text{if} \ \xi \in I_1;
\end{cases}\]

\item $\Big\{m \in \omega: \Big(\tilde{\Phi}(x_{k_m^{j_1}}^{j_0} \cap X_{n_0}), \tilde{\Phi}(x_{k_m^{j_1}}^{j_1}\cap X_{n_1})\Big) = (0,0)\Big\}$ is finite.
 \end{enumerate}  

Again, fix a suitably closed set $E$ containing $\tilde{E}$, $x^0 \cup x^1$ and $x_{k_m^{j_1}}^{j_0} \cup x_{k_m^{j_1}}^{j_1}$, for each $m \in \omega$, so that $E \cap X_{n_0} = Z_0$ and $E \cap X_{n_1} = Z_1$. Consider the homomorphism $\Phi:[E]^{<\omega} \to 2$ such that, for each $\xi \in E$,
\[
\Phi(\{\xi\}) =
 \begin{cases}
    \tilde{\Phi}(\{\xi\}), & \text{if} \ \xi \in \tilde{E}\\
   0, & \text{if }\xi \notin \tilde{E}.
\end{cases}
\]

It follows by construction that $\Phi \in \mathcal{A}$ and that, for each $i<2$,
\begin{align*}
\Phi(x^i) &= \Phi\Big( \big(x^i \cap (X_{n_0} \cup X_{n_1}) \big) \triangle \big(x^i \setminus (X_{n_0} \cup X_{n_1} ) \big) \Big) \\ &= \Phi\Big( (x^i \cap X_{n_0}) \triangle (x^i \cap X_{n_1}) \Big) + \Phi\Big(x^i \setminus (X_{n_0} \cup X_{n_1} )\Big) = \tilde{\Phi}(x^i \cap X_{n_0})+ \tilde{\Phi}(x^i \cap X_{n_1})=  0.
\end{align*}
Furthermore, for every $m \in \omega$ and $i<2$,
\begin{align*}
\Phi(x_{k_m^{j_1}}^{j_i}) &= \Phi\Big( \big(x_{k_m^{j_1}}^{j_i} \cap (X_{n_0} \cup X_{n_1}) \big) \triangle \big(x_{k_m^{j_1}}^{j_i} \setminus (X_{n_0} \cup X_{n_1} ) \big) \Big) \\ &= \Phi\Big( x_{k_m^{j_1}}^{j_i} \cap (X_{n_0} \cup X_{n_1}) \Big) + \Phi\Big(x_{k_m^{j_1}}^{j_i} \setminus (X_{n_0} \cup X_{n_1} )\Big)\\ &= \tilde{\Phi}\Big( x_{k_m^{j_1}}^{j_i} \cap X_{n_0} \Big) + \tilde{\Phi}\Big( x_{k_m^{j_1}}^{j_i} \cap X_{n_1} \Big) = \tilde{\Phi}\Big( x_{k_m^{j_1}}^{j_i} \cap X_{n_i} \Big).
\end{align*}

Thus, 
\[
\Big\{ m \in \omega: \Big(\Phi(x_{k_m^{j_1}}^{j_0}),\Phi(x_{k_m^{j_1}}^{j_1})) = (\Phi(x^0), \Phi(x^1)\Big)  \Big\}
\]
is finite, and therefore $\{(x_{k_m^{j_1}}^{j_0},x_{k_m^{j_1}}^{j_1}): m \in \omega\}$ does not have an accumulation point in $G^2$. In particular, $\{x_{k_m^{j_1}}: m \in \omega\}$ does not have an accumulation point in $G^{\omega}$.
~\\

\textbf{Case 2.2:} For every $j_0, j_1 \in \omega$, $\overline{M_{j_0}} \cap \overline{M_{j_1}} \neq \emptyset$.

In this case, there exists $n_0>0$ so that $\overline{M_j}=\{n_0\}$ for every $j \in \omega$. To make the notation simpler, from now on we call $\{k_m: m \in \omega\}$ the sequence $\{k_m^{n_0}: m \in \omega\}$. By construction, $\{x_{k_m}^{i}: m \in \omega, i \leq n_0\}$ is linearly independent and, for each $i \leq n_0$, there exists $c_i \in [\mathfrak{c}]^{<\omega}$ so that $c_i \cap X_{n_0}= \emptyset$ and $x_{k_m}^{i} = (x_{k_m}^{i} \cap X_{n_0}) \triangle c_i $, for every $m \in \omega$. Thus, there exists $m_0 \in \omega$ such that
\[
\{x_{k_m}^{i} \cap X_{n_0}: m \geq m_0, \ i \leq n_0 \}
\]
is linearly independent. 

We shall prove that $\{(x_{k_m}^{0},...,x_{k_m}^{n_0}): m \in \omega\}$ does not have an accumulation point in $G^{n_0+1}$. For that, consider:
\begin{itemize}
    \item $x=(x^0,...,x^{n_0}) \in G^{n_0+1}$ chosen arbitrarily;
    \item for each $i=0,...,n_0$, $y_m^i=x_{k_m}^{i} \cap X_{n_0}$ for every $m \geq m_0$.
    \item $\tilde{E} \subset X_{n_0}$ a suitably closed set containing $(x^0 \cup ... \cup x^{n_0}) \cap X_{n_0}$ and $y_m^i$, for every $i \leq n_0$ and $m \geq m_0$, so that $|\tilde{E} \setminus \bigcup\{y_m^i: m \geq m_0, \ i \leq n_0\}|= \omega$;
    \item $I = \tilde{E} \cap Y_0 $ (=$\tilde{E} \cap Y^0_{n_0}$);
    \item for each $\xi \in I$, $d_{\xi}=(\{\xi\},...,\{\xi +n_0-1\})$.
\end{itemize}

By Lemma \ref{caso2.2}, there exists a homomorphism $\tilde{\Phi}:[\tilde{E}]^{<\omega} \to 2$ such that
\begin{enumerate}[(i)]
\item For every $s \in (x^0 \cup ... \cup x^{n_0}) \cap X_{n_0}$, $\tilde{\Phi}(\{s\})=0$.
\item For every $\xi \in I$ and $j<n_0$,
\[
\tilde{\Phi}(\{\xi +j\}) = p_{\xi}-\lim_{k \in \omega} \tilde{\Phi}(f_{\xi}^j(k)).
\]
\item $\{m \geq m_0: (\tilde{\Phi}(y_{m}^{0})...,\tilde{\Phi}(y_{m}^{n_0}))= (0,...,0)\}$ is finite. Note that by construction $\tilde{\Phi}(x^i \cap X_{n_0})=0$ for every $i=0,...,n_0$.
 \end{enumerate}  

Consider $E$ a suitably closed set containing $\tilde{E}$, $x^0 \cup... \cup x^{n_0}$ and $x_{k_m}^{0} \cup ... \cup x_{k_m}^{n_0}$, for each $m \geq m_0$, so that $E \cap X_{n_0}= \tilde{E}$.  Hence, we may define a homomorphism $\Phi:[E]^{<\omega} \to 2$ such that, for each $\xi \in E$,
\[
\Phi(\{\xi\}) =
 \begin{cases}
    \tilde{\Phi}(\{\xi\}), & \text{if} \ \xi \in \tilde{E}\\
   0, & \text{if }\xi \notin \tilde{E}.
\end{cases}
\]
In particular, for every $z \in [E]^{<\omega}$ so that $z \cap X_{n_0} = \emptyset$, we have that $\Phi(z)=0$. Moreover, for every $z \in [\tilde{E}]^{<\omega}$, $\Phi(z)= \tilde{\Phi}(z)$. Similarly to \textbf{Case 2.1}, it follows by construction that $\Phi \in \mathcal{A}$ and that 
\[
\{m \geq m_0: (\Phi(x_{k_m}^{0}),...,\Phi(x_{k_m}^{n_0})) = (\Phi(x^0),...,\Phi(x^{n_0})) \} \text{ is finite.}
\]
Again, we conclude that $x$ cannot be an accumulation point of $\{(x_{k_m}^{0},...,x_{k_m}^{n_0}): m \in \omega\}$.
~\\~\\
\indent Therefore, in any case, we showed that there exists a subsequence of $\{x_{k}: k \in \omega\}$ which does not have an accumulation point in $G^{\omega}$, and thus the group is not countably pracompact.
\end{proof}

\end{proof}

As a corollary of the proof above, we obtain:

\begin{Corol}
Suppose that there are $\mathfrak{c}$ incomparable selective ultrafilters. Then, for each $n \in \omega$, $n>0$, there exists a (Hausdorff) topological group whose nth power is countably compact and the (n+1)th power is not selectively pseudocompact.
\end{Corol}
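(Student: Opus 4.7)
The plan is to recycle the group $G$ from the proof of Theorem \ref{casoomega} and extract the witness as a subgroup. Fix $n > 0$ and let $H_n := [X_n]^{<\omega} \subset G$, equipped with the subspace topology. I will argue that $H_n^n$ is countably compact and $H_n^{n+1}$ is not selectively pseudocompact.

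The countable compactness of $H_n^n$ is already implicit in the proof of the first Claim of Theorem \ref{casoomega}: the accumulation point produced there via Lemma \ref{bastali} for any infinite sequence in $([X_n]^{<\omega})^n$ has all coordinates of the form $(\sum_{j<t} P_s(j)\{\xi+j\}) \triangle d_s$ with $\xi \in Y_n^0$ and $d_s \in [X_n]^{<\omega}$, and therefore lies inside $H_n^n$. This is exactly the observation recorded in the footnote of that proof.

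For the failure of selective pseudocompactness of $H_n^{n+1}$, note first that $H_n$ is infinite and countably compact (as a continuous image of $H_n^n$ under projection), hence non-discrete. Lemma \ref{abertosli} then provides nonempty open sets $\{U_k^j : k, j \in \omega\}$ in $H_n$ such that every selection $u_k^j \in U_k^j$ yields a linearly independent family. Define $V_k := \prod_{j=0}^{n} U_k^j$, nonempty open in $H_n^{n+1}$. Given any sequence $(x_k)_{k \in \omega}$ with $x_k = (x_k^0, \ldots, x_k^n) \in V_k$, the family $\{x_k^j : k \in \omega,\ 0 \leq j \leq n\}$ is linearly independent and entirely contained in $[X_n]^{<\omega}$, placing us directly in the setting of Case 2.2 of the proof of Claim \ref{Gomega} with $n_0 = n$, trivial subsequence $k_m = m$, and $c_i = \emptyset$ for all $i \leq n$. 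Replaying that argument verbatim, for any prospective accumulation point $x = (x^0, \ldots, x^n) \in G^{n+1}$ one constructs a homomorphism $\overline{\sigma_\mu} \in \overline{\mathcal{A}}$ with $\{k : \overline{\sigma_\mu}(x_k^j) = \overline{\sigma_\mu}(x^j) \text{ for every } j \leq n\}$ finite; hence $x$ is not an accumulation point of $\{x_k\}$, so no $p$-limit exists for any $p \in \omega^*$, witnessing that $H_n^{n+1}$ fails to be selectively pseudocompact.

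The only point requiring care is that Case 2.2 be applicable without its preparatory subsequence refinement. This is immediate: the refinement in Case 2.2 exists precisely to produce the linear independence of $\{x_k^j \cap X_{n_0} : k, j\}$ together with the uniform-support condition $\overline{M_j} = \{n_0\}$, and both are delivered for free here by Lemma \ref{abertosli} together with the containment $H_n^{n+1} \subset ([X_n]^{<\omega})^{n+1}$.
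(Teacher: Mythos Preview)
Your proposal is correct and follows essentially the same route as the paper: take the subgroup $H=[X_n]^{<\omega}$ of the group $G$ from Theorem \ref{casoomega}, invoke the footnoted observation that the accumulation points produced in Claim 1 lie in $([X_n]^{<\omega})^n$ to get countable compactness of $H^n$, and then combine Lemma \ref{abertosli} with the Case 2.2 argument (i.e., Lemma \ref{caso2.2}) to kill every selection from the chosen open sets in $H^{n+1}$. Your write-up is in fact slightly more explicit than the paper's sketch---you justify non-discreteness of $H_n$ before invoking Lemma \ref{abertosli} and you rule out accumulation points in all of $G^{n+1}$ rather than just $H_n^{n+1}$---but the strategy is identical.
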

\begin{proof}
With the same notation of the previous proof, for each $n >0$, we choose the topological subgroup $H \doteq [X_n]^{<\omega} \subset G$. As already mentioned in a footnote, $H^n$ is countably compact. Also, using Lemma \ref{caso2.2} similarly to what was done in \textbf{Case 2.2}, one can show that every sequence $(x_{k}^0,...,x_{k}^{n})_{k \in \omega}$ in $H^{n+1}$ so that $\{x_k^j : k \in \omega , j \leq n\}$ is linearly independent does not have an accumulation point in $H^{n+1}$. Then, it is enough to choose a sequence of nonempty open sets $\{(U_k^0 \times ... \times U_k^n): k \in \omega\} \subset H^{n+1}$, with $\{U_k^j: k \in \omega, j \leq n\}$ as in Lemma \ref{abertosli}, to prove that $H^{n+1}$ is not selectively pseudocompact.
\end{proof}

Recall that in \cite{finpow} the authors proved the same result using CH.

\section{Consistent solutions to the Comfort-like question for countably pracompact groups in the case of infinite successor cardinals}

\begin{Teo}\label{sucessorcardinal}
Suppose that there are $2^\mathfrak{c}$ incomparable selective ultrafilters. Let $\kappa \leq 2^{\mathfrak{c}}$ be an infinite cardinal. Then there exists a (Hausdorff) group $G$ such that $G^{\kappa}$ is countably pracompact and $G^{\kappa^+}$ is not countably pracompact.
\end{Teo}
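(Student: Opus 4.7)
The plan is to adapt the blueprint of Theorem~\ref{casoomega}, now forcing the ultrafilter machinery to furnish $p_\xi$-limits for $\omega$-sequences taking values in a dense $\sigma$-product of $G^\kappa$, where $\kappa$ plays the role previously occupied by finite $n$. The group $G$ will be the Boolean group $[2^{\mathfrak{c}}]^{<\omega}$ equipped with a suitable Hausdorff topology. First I would fix a partition $2^{\mathfrak{c}} = X_0 \cupdot X_1$, both of cardinality $2^{\mathfrak{c}}$, arranged so that $X_0$ contains, for each $\xi$ in a guide set $Y_0 \subset X_0$ of cardinality $2^{\mathfrak{c}}$, the full $\kappa$-block $\{\xi + \alpha : \alpha < \kappa\}$, while $X_1$ is used to freely realize finite prescribed values. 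The cardinal arithmetic $\kappa^{\omega} \cdot (2^{\mathfrak{c}})^{\omega} = 2^{\mathfrak{c}}$ (which uses $\kappa \leq 2^{\mathfrak{c}}$) lets me enumerate as $\{f_\xi : \xi \in Y_0\}$ all pairs $(A_\xi, f_\xi)$ with $A_\xi \in [\kappa]^{\omega}$ and $f_\xi : \omega \to ([X_0 \cup X_1]^{<\omega})^{A_\xi}$ a sequence whose range is linearly independent and lies below $\xi$. By hypothesis I can also pick a family of incomparable selective ultrafilters $\{p_\xi : \xi \in Y_0\}$. I would call a homomorphism $\sigma : [B]^{<\omega} \to 2$, with $B \subset 2^{\mathfrak{c}}$ countable and suitably closed, admissible when $\sigma(\{\xi + \alpha\}) = p_\xi-\lim_k \sigma(f_\xi^{\alpha}(k))$ for every $\xi \in Y_0 \cap B$ and $\alpha \in A_\xi$. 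Enumerating admissible $\sigma$'s by $[\omega, 2^{\mathfrak{c}})$, extending each to $\overline{\sigma_\mu} : [2^{\mathfrak{c}}]^{<\omega} \to 2$ exactly as in Theorem~\ref{casoomega}, and taking $G$ to be $[2^{\mathfrak{c}}]^{<\omega}$ endowed with the weakest group topology making every $\overline{\sigma_\mu}$ continuous yields the desired group; Hausdorffness follows from Corollary~\ref{homfraco}.

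For countable pracompactness of $G^\kappa$ the dense witness is $D \doteq \{g \in G^\kappa : g^{\alpha} \in [X_0 \cup X_1]^{<\omega} \text{ and } |\supp(g)| \leq \omega\}$. Given an infinite sequence $(x_n)_n \subset D$, the support union $A \doteq \bigcup_n \supp(x_n)$ lies in $[\kappa]^{\omega}$; a coordinate-wise application of Lemma~\ref{bastali} along $A$ produces a subsequence $(x_{n_l})_l$, some $t \leq \omega$, a linearly independent family $\{y_l^j : l \in \omega, 0 \leq j < t\}$, offsets $d_\alpha$ and functions $P_\alpha : t \to 2$ such that $x_{n_l}^\alpha = \bigl(\sum_{j<t} P_\alpha(j)\,y_l^j\bigr) \triangle d_\alpha$ for every $\alpha \in A$. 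The resulting data $(A, (y_l^j)_l)$ matches some enumerated $f_\xi$ (after identifying $A$ with $A_\xi$), so admissibility forces $p_\xi-\lim_l x_{n_l}^\alpha = \bigl(\sum_{j<t} P_\alpha(j)\{\xi + j\}\bigr) \triangle d_\alpha$ for each $\alpha \in A$, which, extended by $0$ on $\kappa \setminus A$, is the required accumulation point in $G^\kappa$.

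To show $G^{\kappa^+}$ is not countably pracompact, take any dense $Y \subset G^{\kappa^+}$ and use Lemma~\ref{abertosli} applied along a well-chosen $\omega$-indexed family of coordinate-product open sets to extract a sequence $(x_n)_n \subset Y$ with sufficiently rich linearly independent coordinate entries. A case analysis paralleling Claim~\ref{Gomega} (Cases~1, 2, 2.1, 2.2), now run with supports distributed across $X_0$-blocks and across coordinates in $\kappa^+$, produces a subsequence without an accumulation point: for each candidate accumulation point $x \in G^{\kappa^+}$, I would diagonally build an admissible homomorphism $\theta \in \mathcal{A}$ separating $x$ from infinitely many $x_n$, extending along suitably closed sets and invoking Lemma~\ref{completo}, Remark~\ref{RemLema}, Lemma~\ref{caso2.2} and Corollary~\ref{homfraco}. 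The main obstacle, and the combinatorial heart of the argument, is that each $\xi \in Y_0$ commits only $|A_\xi| \leq \omega$ coordinates, all lying inside $\kappa$; converting the cardinality gap between the $2^{\mathfrak{c}}$-many sequence patterns that our machinery handles in $G^\kappa$ and the strictly larger supply of sequence patterns available in $G^{\kappa^+}$ into actual admissible separating homomorphisms—uniformly across all the support/block subcases—is where the bulk of the technical work goes, and where the hypothesis $\kappa \leq 2^{\mathfrak{c}}$ (and the presence of exactly $2^{\mathfrak{c}}$ incomparable selective ultrafilters) is used most critically.
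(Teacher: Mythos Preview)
Your construction has a genuine gap that makes it unworkable as stated: the admissibility data you allow is too generous, and as a consequence the very argument you give for countable pracompactness of $G^\kappa$ also proves countable pracompactness of $G^{\kappa^+}$.

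Concretely, your $f_\xi$'s range over \emph{all} linearly independent families in $[2^{\mathfrak{c}}]^{<\omega}$ (indexed by an arbitrary countable $A_\xi\subset\kappa$), with no restriction tying the values of $f_\xi$ to any partition piece. Now run your own pracompactness argument with $\kappa$ replaced by $\kappa^+$: the $\sigma$-product $D'=\{g\in G^{\kappa^+}:|\supp(g)|\le\omega\}$ is dense, any sequence in $D'$ has countable support $A\subset\kappa^+$, the (extended) Lemma~\ref{bastali} produces the same $t\le\omega$, the same linearly independent $\{y_l^j\}$, and coefficients $P_\alpha,d_\alpha$ for $\alpha\in A$. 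The family $(y_l^j)$ lives in $[2^{\mathfrak{c}}]^{<\omega}$ and is indexed by $j<t\le\omega$, so it is captured by some $f_\xi$ exactly as before (nothing here used that $A\subset\kappa$ rather than $A\subset\kappa^+$; the indices $j$ live in $\omega\subset\kappa$ regardless). Hence $p_\xi\text{-}\lim_l x_{n_l}^\alpha=\bigl(\sum_{j<t}P_\alpha(j)\{\xi+j\}\bigr)\triangle d_\alpha$ for every $\alpha\in A$, giving an accumulation point in $G^{\kappa^+}$. Your ``cardinality gap'' heuristic in the last paragraph does not rescue this: the coordinate index $\alpha$ and the basis index $j$ play completely different roles, and only the latter needs to sit inside a block.

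The paper avoids this by a genuinely different architecture. It partitions $2^{\mathfrak{c}}$ into $\kappa$ pieces $\{X_\gamma:\gamma<\kappa\}$ and takes as dense witness in $G^\kappa$ the very rigid diagonal $\{(\{x_\beta^\gamma\})_{\gamma<\kappa}:\beta<2^{\mathfrak{c}}\}$; the only sequences that are assigned ultrafilter limits are singleton sequences coming from an injective index sequence $I_\alpha:\omega\to 2^{\mathfrak{c}}$, with the \emph{same} ultrafilter $p_\alpha$ used simultaneously in every coordinate. This restriction is exactly what enables the non-pracompactness argument for $G^{\kappa^+}$: supports are taken relative to the partition $\{X_\gamma\}$, and a pigeonhole over $|[\kappa]^{<\omega}|=\kappa<\kappa^+$ many possible finite supports against $\kappa^+$ coordinates yields a fixed finite $F_0\subset\kappa$ shared by infinitely many coordinates, after which Lemma~\ref{caso2teo2} and Lemma~\ref{caso2.2} finish the job. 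Your setup has no $\kappa$-indexed partition, so neither the $\text{SUPP}$ notion nor this pigeonhole is available, and your case analysis ``paralleling Claim~\ref{Gomega}'' has no ground to stand on.
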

\begin{proof} 
The required group will be constructed giving a suitable topology to the Boolean group $[2^{\mathfrak{c}}]^{<\omega}$. 

Let $\{X_{\gamma}: \gamma < \kappa\}$ be a partition of $2^{\mathfrak{c}}$ so that $|X_{\gamma}|= 2^{\mathfrak{c}}$ for every $\gamma < \kappa$.
For each $\gamma < \kappa$, we enumerate $X_{\gamma}$ in strictly increasing order as $\{x_{\beta}^{\gamma}: \beta < 2^{\mathfrak{c}}\}$ (in this case, it is clear that, for every $\gamma < \kappa$ and $\beta < 2^\mathfrak{c}$, $\beta \leq x_{\beta}^{\gamma}$). Let also:
\begin{itemize}
    \item $\{J_0, J_1\}$ be a partition of $2^\mathfrak{c}$ so that $|J_0|=|J_1|= 2^\mathfrak{c}$ and that $\omega \subset J_1$;
    \item $X_{\gamma}^0 \doteq \{x_\beta^\gamma: \beta \in J_0\}$ and $X_{\gamma}^1 \doteq \{x_{\beta}^\gamma: \beta \in J_1\}$, for each $\gamma < \kappa$;
    \item $X_0 \doteq \bigcup_{\gamma < \kappa} X_{\gamma}^0$ and $X_1 \doteq \bigcup_{\gamma < \kappa} X_{\gamma}^1$;
    \item $\mathcal{P} \doteq \{p_{\xi}: \xi \in J_0\}$ be a family of incomparable selective ultrafilters, which exists by hypothesis.
\end{itemize}

Now enumerate the set of all injective sequences of $2^\mathfrak{c}$ as $\{I_{\alpha}: \alpha \in J_0\}$, assuming that for every $\alpha \in J_0$, $\text{rng}(I_{\alpha}) \subset \alpha$. Finally, for each $\alpha \in J_0$ and $\gamma < \kappa$, we define the function $f_{\alpha}^{\gamma}: \omega \to [X_{\gamma}]^{<\omega}$ as
\[
f_{\alpha}^{\gamma}(l) = \{x_{I_{\alpha}(l)}^{\gamma}\},
\]
for every $l \in \omega$. Note that, for each $\alpha \in J_0$ and $\gamma < \kappa$, $\text{rng}(f_{\alpha}^{\gamma}) \subset [x_{\alpha}^{\gamma}]^{<\omega}$.

Next we define which are the suitably closed sets of this construction.

\begin{Def}
A set $A \in [2^\mathfrak{c}]^{\omega}$ is suitably closed if, for every $\gamma < \kappa$ and $\beta \in J_0$, if $x^{\gamma}_{\beta} \in A $, then $\bigcup_{l \in \omega}f^{\gamma}_{\beta}(l) \subset A$.
\end{Def}

Let $\mathcal{A} = \{\sigma_{\mu}: \mu \in [\omega, 2^\mathfrak{c})\}$ be an enumeration of all homomorphisms $\sigma:[A]^{<\omega} \to 2$, with $A$ suitably closed, such that
\[
\sigma(\{x_{\beta}^{\gamma}\})= p_{\beta}-\lim_{l \in \omega} \sigma(f^{\gamma}_{\beta}(l)),
\]
for every $\gamma < \kappa$ and $\beta \in J_0$ satisfying that $x_{\beta}^{\gamma} \in A$. We may assume without loss of generality that $\bigcup \text{dom}(\sigma_{\mu}) \subset \mu $, for every $\mu \in [\omega, 2^\mathfrak{c})$.

Next, we will properly extend each homomorphism $\sigma_{\mu}$ in $\mathcal{A}$ to a homomorphism $\overline{\sigma_{\mu}}$ defined in $[2^\mathfrak{c}]^{<\omega}$. For this purpose, we enumerate the set 
\begin{align*}
\mathcal{B} \doteq \{\{(\gamma_0,g_0),...,(\gamma_k,g_k)\}&: k \in \omega, |\{\gamma_0,...,\gamma_k\}|=k+1,  \text{ and, } \\ &\text{ for each } i=0,...,k, \gamma_i <\kappa, \text{ and }g_i:S_i \to 2, \text{ for some } S_i \in [2^\mathfrak{c}]^{<\omega} \}
\end{align*}
as $\{b_{\beta}: \beta \in J_1 \}$. We may also assume that for every $\beta \in J_1 $, $b_{\beta} =  \{(\gamma_0,g_0),...,(\gamma_k,g_k)\}$ is such that $\bigcup_{i=0}^k \text{dom}(g_i) \subset \beta$.

Given $\mu \in [\omega,2^\mathfrak{c})$, if $\xi < 2^\mathfrak{c}$ is such that $\{\xi\} \in \text{dom}(\sigma_{\mu})$, we put $\overline{\sigma_{\mu}}(\{\xi\}) = \sigma_{\mu}(\{\xi\})$. Otherwise, we have a few cases to consider. Firstly, we define the homomorphism in the remaining elements of $X_{\gamma}^1$, for each $\gamma < \kappa$, as described in the next paragraph.

Let $\gamma < \kappa$ and $\xi \in X_\gamma^1$ be so that $\{\xi\} \notin \text{dom}(\sigma_{\mu})$. Let also $\beta \in J_1$ be the element such that $\xi = x^\gamma_{\beta}$. Now,
\begin{itemize}
    \item if there exists a function $g:S \to 2$, $S \in [2^\mathfrak{c}]^{<\omega}$, so that $(\gamma,g) \in b_{\beta}$ and $\mu \in \text{dom}(g)$, we put $\overline{\sigma_{\mu}}(\{\xi\}) = g(\mu)$;
    \item otherwise, we put $\overline{\sigma_{\mu}}(\{\xi\})=0$.
\end{itemize}

Finally, in the remaining elements of $X_\gamma^0$, for each $\gamma < \kappa$, we define $\overline{\sigma_{\mu}}$ recursively, by putting
\[
\overline{\sigma_{\mu}}(\{x_{\beta}^\gamma\}) = p_{\beta}-\lim_{l \in \omega} \overline{\sigma_{\mu}}(f^\gamma_{\beta}(l)),
\]
for each $\beta \in J_0$.

Now we define $\overline{\mathcal{A}} \doteq \{\overline{\sigma_{\mu}}: \mu \in [\omega, 2^\mathfrak{c})\}$. It is clear by the construction that, for each $\mu \in [\omega, 2^\mathfrak{c})$,
\[
\overline{\sigma_{\mu}}(\{x_{\beta}^\gamma\}) = p_{\beta}-\lim_{l \in \omega} \overline{\sigma_{\mu}}(f^\gamma_{\beta}(l)),
\]
for every $\gamma < \kappa$ and $\beta \in J_0$. Let $G$ be the group $[2^\mathfrak{c}]^{<\omega}$ endowed with the topology generated by the homomorphisms in $\overline{\mathcal{A}}$. 

Given $x \in G$, we define, similarly as before,
\[
\text{SUPP}(x) = \{\gamma < \kappa: x \cap X_\gamma \neq \emptyset\}.
\]

We claim that $G$ is Hausdorff. Indeed, let $x \in [2^\mathfrak{c}]^{<\omega} \setminus \{\emptyset\}$ and, given $\gamma \in \text{SUPP}(x)$, $z = x \cap X_\gamma$. Let also $A_0 \subset X_\gamma$ be a suitably closed set containing $z$. In order to use Corollary \ref{homfraco}, consider:
\begin{itemize}
\item $E = A_0$;
\item $I=A_0 \cap X_\gamma^0$;
\item $\mathcal{F}=\{z\}$;
\item $\{q_\xi: \xi \in I\} \subset \mathcal{P}$ so that, for each $\xi \doteq x_{\beta}^\gamma \in I$, $q_{\xi} \doteq p_\beta$;
\end{itemize}
and, for each $\xi \doteq x_\beta^\gamma \in I$,
\begin{itemize}
\item $k_\xi =1$;
\item $g_\xi=f_\beta^\gamma$;
\item $d_\xi=\{x_ \beta^\gamma\}$.
\end{itemize}
By Corollary \ref{homfraco}, we may fix a homomorphism $\sigma_0:[A_0]^{<\omega} \to 2$ so that $\sigma_0 \in \mathcal{A}$ and $\sigma_0(z)=1$. Now, let $A$ be a suitably closed set containing $x$ so that $A \cap X_\gamma = A_0$ and $\sigma:[A]^{<\omega} \to 2$ be a homomorphism so that
 \[
\sigma(\{\xi\}) =
 \begin{cases}
    \sigma_0(\{\xi\}), & \text{if} \ \xi \in A_0\\
   0, & \text{if }\xi \notin A_0.
\end{cases}
\]
Then, $\sigma \in \mathcal{A}$ and $\sigma(x)=\sigma\big((x\cap X_\gamma) \triangle (x \setminus X_\gamma)\big)=1$. By construction, there exists $\mu \in [\omega,2^\mathfrak{c})$ so that $\sigma_\mu=\sigma$, and hence $\overline{\sigma_{\mu}}(x)=1$.

\begin{Cla}
$G^{\kappa}$ is countably pracompact.
\end{Cla}
\begin{proof}[Proof of the claim]\renewcommand{\qedsymbol}{$\blacksquare$}
We claim that $\{(\{x_{\beta}^{\gamma}\})_{\gamma < \kappa}: \beta < 2^\mathfrak{c}\} \subset G^{\kappa}$ is a dense subset for which every infinite subset has an accumulation point in $G^{\kappa}$.

For $k \in \omega$, let $\{\gamma_0,...,\gamma_k\} \subset \kappa$ be a finite set of size $k+1$ and, for each $i \in  \{0,...,k\}$, let 
\begin{itemize}
\item  $\mu^i_0,...,\mu^i_{j_i} \in [\omega,2^{\mathfrak{c}})$, for some $j_i \in \omega$;
\item  $g_i:\{\mu_0^i,...,\mu_{j_i}^i\} \to 2$ be a function.
\end{itemize}
We shall prove that, if 
\[
\bigcap_{p=0}^{j_i} (\overline{\sigma_{\mu_{p}^i})}^{-1}(g_i(\mu^i_p)) \neq \emptyset,
\]
for every $i=0,...,k$, then there exists $\beta_0 \in J_1$ so that $\{x_{\beta_0}^{\gamma_i}\} \in \bigcap_{p=0}^{j_i} (\overline{\sigma_{\mu_{p}^i}})^{-1}(g_i(\mu_p^i))$ for each $i = 0,...,k$. For that, let $\beta_0 \in J_1$ be so that
\[
\{(\gamma_0,g_0),...,(\gamma_k,g_k)\} = b_{\beta_0}.
\]
 Since, by construction, $\bigcup \text{dom}(\sigma_\mu) \subset \mu$ for every $\mu \in[\omega, 2^\mathfrak{c})$, $\bigcup_{i=0}^k \text{dom}(g_i) \subset \beta_0$ and $\beta_0 \leq x_{\beta_0}^\gamma$ for every $\gamma < \kappa$, it follows that $\overline{\sigma_{\mu_p^i}}(\{x_{\beta_0}^{\gamma_i}\}) = g_i(\mu_p^i)$ for each $i=0,...,k$ and $p = 0,... , j_i$, as we wanted.

Furthermore, given an injective sequence $I_{\alpha}: \omega \to 2^\mathfrak{c}$, for some $\alpha \in J_0$, we claim that $\{(\{x_{I_{\alpha}(l)}^\gamma\})_{\gamma < \kappa}: l \in \omega\}$ has $(\{x_{\alpha}^\gamma\})_{\gamma < \kappa}$ as accumulation point. Indeed, for every $\mu \in [\omega, 2^\mathfrak{c})$, by construction,
\[
\overline{\sigma_{\mu}}(\{x_{\alpha}^\gamma\}) = p_{\alpha}-\lim_{l \in \omega} \overline{\sigma_{\mu}}(\{x_{I_{\alpha}(l)}^\gamma\}),
\]
for each $\gamma < \kappa$. 
\end{proof}

\begin{Cla}
$G^{\kappa^+}$ is not countably pracompact.
\end{Cla}
\begin{proof}[Proof of the claim]\renewcommand{\qedsymbol}{$\blacksquare$}
Since the proof of this claim is similar to the proof of \textbf{Claim 2} of Theorem \ref{casoomega}, we omit the details of some arguments. 

Let $Z \subset G^{\kappa^+}$ be a dense subset. We shall show that there exists a sequence in $Z$ that does not have an accumulation point in $G^{\kappa^+}$. We will again split the proof of this claim in two cases.
~\\

\textbf{Case 1:}  There exists $j \in \kappa^+$ so that $\bigcup_{z \in Z} \text{SUPP}(z^j)$ is infinite.
~\\

In this case, we may fix a sequence $\{z_m: m \in \omega\} \subset Z$ so that
\[
\text{SUPP}(z^j_m) \setminus \bigcup_{p < m} \text{SUPP}(z^j_p) \neq \emptyset,
\]
for every $m \in \omega$. We shall show that, for a given $y \in G$, $y$ is not an accumulation point of $\{z^j_m: m \in \omega\}$. In particular, this shows that $\{z_m : m \in \omega\}$ does not have an accumulation point in $G^{\kappa^+}$.

\begin{Rem}
Although the arguments are analogous to those in \textbf{Case 1} of \textbf{Claim} \ref{Gomega}, as we are about to see, there is another technical complication in this case. We can only guarantee the validity of Corollary \ref{homfraco} for suitably closed sets $A$ so that $A\subset X_{\gamma}$, for some $\gamma < \kappa$. In fact, while the mapping $\xi \in X_0 \cap A \to q_{\xi} \in \mathcal{P}$ has to be injective\footnotemark \footnotetext{Indeed, $X_0 \cap A $ will be the set $I$, in the notation of Corollary \ref{homfraco}, and then $(q_{\xi})_{\xi \in I}$ has to be a family of incomparable selective ultrafilters.}, we wish to map $x_{\beta}^\gamma$, for $\beta \in J_0$ and $\gamma < \kappa$, to $p_{\beta}$.
\end{Rem}
\vspace{0.8 cm}

Let:
\begin{enumerate}[1)]
    \item $M_0 \in \omega$ be such that, for every $m \geq M_0$,
\[
\text{SUPP}(z_{m}^j) \setminus \left( \bigcup_{p<m} \text{SUPP}(z_{p}^j) \cup \text{SUPP}(y)   \right) \neq \emptyset;
\]
    
    \item $ \displaystyle
F_0 \doteq \bigcup_{p<M_0} \text{SUPP}(z_{p}^j) \cup \text{SUPP}(y);$
    
    \item for each $i>0$,
\[
F_i \doteq \text{SUPP}(z_{M_0+i-1}^j) \setminus \left(\bigcup_{p< M_0+i-1} \text{SUPP}(z_{p}^j) \cup \text{SUPP}(y)  \right); \]

\item for each $i \in \omega$,
\[
D_i \doteq \Big(\bigcup_{m \in \omega} z_{m}^j \cup y  \Big) \cap \Big( \bigcup_{\gamma \in F_i}X_{\gamma} \Big);
\]

\item $A_i$ be a suitably closed set containing $D_i$ such that $A_i \subset \bigcup_{\gamma \in F_i} X_{\gamma}$;

\item for each $i \in \omega$, $\gamma_i \in F_i$ be arbitrarily chosen;

\item for each $i \in \omega$, $A_i^{0} \subset X_{\gamma_i}$ be a suitably closed set containing $A_i \cap X_{\gamma_i}$.
    
\end{enumerate}

In order to use Corollary \ref{homfraco}, consider
\begin{itemize}
    \item $E= A_0^0$;
    \item  $I= A_0^0 \cap X_{\gamma_0}^0$;
    \item $\{q_{\xi}: \xi \in I\} \subset \mathcal{P}$ so that, for each $\xi \doteq x_{\beta}^{\gamma_0} \in I$, $q_{\xi} \doteq p_{\beta}$;
\end{itemize}
and, for each $\xi \doteq x^{\gamma_0}_{\beta} \in I$,
\begin{itemize}
    \item $k_{\xi}=1$;
    \item $g_{\xi} = f_{\beta}^{\gamma_0}$;
    \item  $d_{\xi}=\{x^{\gamma_0}_{\beta}\}$.
    \end{itemize}
    By Corollary \ref{homfraco}, we may ensure the existence of a homomorphism $\tilde{\theta}_0:[A_0^0]^{<\omega} \to 2$ such that $\tilde{\theta}_0 \in \mathcal{A}$ and $\tilde{\theta}_0(y \cap X_{\gamma_0})=0$. Then, we define $\theta_0:[A_0]^{<\omega} \to 2$ so that, for every $\xi \in A_0$,
    \[
\theta_0(\{\xi\}) =
 \begin{cases}
    \tilde{\theta}_0(\{\xi\}), & \text{if} \ \xi \in A_0^0\\
   0, & \text{if }\xi \notin A_0^0.
\end{cases}
\]
    Note that, in this case, we still have $\theta_0 \in \mathcal{A}$, and also 
    \[
    \theta_0(y) = \theta_0(y \cap X_{\gamma_0}) + \theta_0(y \setminus X_{\gamma_0}) = \tilde{\theta_0}(y \cap X_{\gamma_0}) = 0.
    \]

Suppose that we have constructed a set of homomorphisms $\{\theta_i: i <l\} \subset \mathcal{A}$, for $l>0$, such that:
\begin{enumerate}[i)]
\item $\theta_i$ is a homomorphism defined in $\big[ \bigcup_{p \leq i} A_p \big]^{<\omega}$ taking values in $2$, for each $i<l$;
   \item $\theta_0(y)=0$;
    \item $\theta_i$ extends $\theta_{i-1}$ for each $0<i<l$;
    \item $\theta_{i}(z_{M_0+p}^j)=1$ for each $0<i<l$ and $p=0,...,i-1$.
\end{enumerate}

Again, in order to use Corollary \ref{homfraco}, consider:
\begin{itemize}
\item $E= A_l^0$;
\item $I= A_l^0 \cap X_{\gamma_l}^0$;
\item $\{q_{\xi}: \xi \in I\} \subset \mathcal{P}$ so that, for each $\xi \doteq x_{\beta}^{\gamma_l} \in I$, $q_{\xi} \doteq p_{\beta}$;
\end{itemize}
and, for each $\xi \doteq x^{\gamma_l}_{\beta} \in I$,
\begin{itemize}
    \item $k_{\xi}=1$;
    \item $g_{\xi} = f_{\beta}^{\gamma_l}$;
    \item  $d_{\xi}=\{x^{\gamma_l}_{\beta}\}$.
    \end{itemize}

By Corollary \ref{homfraco}, we may ensure the existence of a homomorphism $\tilde{\psi}:[A_l^0]^{<\omega} \to 2$ so that $\tilde{\psi} \in \mathcal{A}$ and 
\[\tilde{\psi}\Big(z_{M_0+l-1}^j \cap X_{\gamma_l}\Big) + \theta_{l-1}\Big(z_{M_0+l-1}^j \setminus \bigcup_{\gamma \in F_l} X_{\gamma} \Big) = 1.\]

Then, we define $\psi: [A_l]^{<\omega} \to 2$ so that, for every $\xi \in A_l$,
 \[
\psi(\{\xi\}) =
 \begin{cases}
    \tilde{\psi}(\{\xi\}), & \text{if} \ \xi \in A_l^0\\
   0, & \text{if }\xi \notin A_l^0.
\end{cases}
\]

Let $\theta_l : \big[\bigcup_{p \leq l} A_p \big]^{<\omega} \to 2$ be a homomorphism extending both $\theta_{l-1}$ and $\psi$. By construction, we have that $\theta_l(y)=0$, $\theta_l \in \mathcal{A}$, and also that
\begin{align*} 
\theta_l(z_{M_0+l-1}^j) &= \theta_l\Big(z_{M_0+l-1}^j \cap \bigcup_{\gamma \in F_l} X_{\gamma} \Big) + \theta_{l}\Big( z_{M_0+l-1}^j \setminus \bigcup_{\gamma \in F_l }X_{\gamma} \Big)  \\ &= \tilde{\psi}\Big(z_{M_0+l-1}^j \cap X_{\gamma_l} \Big) + \psi\Big( z_{M_0+l-1}^j \cap \bigcup_{\gamma \in F_l \setminus \{\gamma_l\}}X_{\gamma} \Big) + \theta_{l-1}\Big( z_{M_0+l-1}^j \setminus \bigcup_{\gamma \in F_l }X_{\gamma} \Big) \\ &= \tilde{\psi}\Big(z_{M_0+l-1}^j \cap X_{\gamma_l} \Big) + \theta_{l-1}\Big( z_{M_0+l-1}^j \setminus \bigcup_{\gamma \in F_l }X_{\gamma} \Big) =1 .
\end{align*}

Moreover, it follows by construction that $\theta_l(z_{M_0+p}^j) = \theta_{l-1}(z_{M_0+p}^j)=1 $ for each $0\leq p<l-1$. Therefore, there exists a family of homomorphisms $\{\theta_i: i \in \omega\} \subset \mathcal{A}$ satisfying i)-iv) for every $l \in \omega$. 

Letting $A \doteq \bigcup_{i \in \omega} A_i$, the homomorphism $\theta \doteq \bigcup_{i \in \omega} \theta_i : [A]^{<\omega} \to 2$, satisfies that:
\begin{itemize}
    \item $\theta \in \mathcal{A}$;
    \item $\theta(y)=0$;
    \item $\theta(z_{{M_0+p}}^j)=1$ for every $p \in \omega$.
\end{itemize}

By construction, there exists $\mu \in [\omega,2^\mathfrak{c})$ so that $\theta = \sigma_{\mu}$, thus $\overline{\sigma_\mu}: [2^\mathfrak{c}]^{<\omega} \to 2$ satisfies that $\overline{\sigma_{\mu}}(z_m^j)=1$ for each $m \geq M_0$, and $\overline{\sigma_{\mu}}(y)=0$. Hence, $y \in G$ is not an accumulation point of $\{z_{m}^j: m \in \omega\}$. 
~\\

\textbf{Case 2:} For every $j \in \kappa^+$, $M_j \doteq \bigcup_{z \in Z} \text{SUPP}(z^j)$ is finite.
~\\

Since, in this case,
\[\bigcup_{F \in [\kappa]^{<\omega}} \Big\{j \in \kappa^+: M_j = F \Big\} = \kappa^+,\] 
there exists $F_0 \in [\kappa]^{<\omega}$ so that $N \doteq \Big\{j \in \kappa^+: M_j= F_0 \Big\}$ is infinite. Choose $N_0 \subset N$ so that $|N_0|=\omega$, and let $\{j_i: i \in \omega\}$ be an enumeration of $N_0$. 

Now, consider the set $\{U_k^i: k \in \omega, i \in \omega\}$ of nonempty open subsets of $G$ given by Lemma \ref{abertosli}. For each $k \in \omega$, we may choose an element $z_k \in Z \cap \prod_{i \leq k} U_k^i \times G^{\kappa^+ \setminus \{j_0,...,j_k\}}$. Similarly to what was done in \textbf{Case 2} of \textbf{Claim} \ref{Gomega}, we can fix 
 a subsequence $\{k_m^i: m \in \omega\}$, for each $i \in \omega$, so that: 
\begin{itemize}
\item $\{k_m^{i+1}: m \in \omega\}$ refines $\{k_m^i: m \in \omega\}$, for each $i \in \omega$;
\item for every $i \in \omega$, $p \leq i$ and $\gamma \in F_0$, either the family $\{z_{k_m^i}^{j_p} \cap X_\gamma: m \in \omega\}$ is linearly independent or constant;
\item $k_0^i \geq i$, for each $i \in \omega$.
\end{itemize}
Notice at this point that
\[
\Big\{ z_{k_m^i}^{j_i}: i \in \omega, m \in \omega \Big\}
\]
is linearly independent. For each $i \in \omega$, let 
\[
\overline{M_{j_i}} \doteq \Big\{\gamma \in F_0: \{z_{k_m^i}^{j_i} \cap X_\gamma: m \in \omega\} \text{ is linearly independent} \Big\}.
\]
Again, we have that $\overline{M_{j_i}} \neq \emptyset$ for every $i \in \omega$. Then, choose $a, b \in \omega$, $b>a$, so that $M \doteq \overline{M_{j_a}}= \overline{M_{j_b}}$. In this case, there exist $c_a, c_b \in [2^\mathfrak{c}]^{<\omega}$ so that 
\[
z_{k_m^b}^{j_l} = \Big( z_{k_m^b}^{j_l} \cap \bigcup_{\gamma \in M} X_\gamma \Big) \triangle c_l,
\]
for each $l \in \{a,b\}$ and $m \in \omega$. Thus, there exists $m_0 \in \omega$ so that
\[
\Big\{ z_{k_m^b}^{j_l} \cap \bigcup_{\gamma \in M} X_\gamma: m \geq m_0, \ l \in \{a,b\}  \Big\}
\]
is linearly independent. By Lemma \ref{caso2teo2}, we may fix a subsequence $\{k_m: m \in \omega\}$ of $\{k_m^b: m \in \omega\}$ and $\gamma_0 \in M$ so that
\[
\Big\{z_{k_m}^{j_l} \cap X_{\gamma_0}: m \in \omega, \ l \in \{a,b\} \Big\}
\]
is linearly independent.

We shall show that $\{(z_{k_m}^{j_a}, z_{k_m}^{j_b}): m \in \omega\}$ does not have an accumulation point in $G^2$. For this purpose, consider:
\begin{itemize}
    \item $x=(x^0,x^1) \in G^{2}$ chosen arbitrarily;
    \item for each $l \in \{a,b\}$ and $m \in \omega$, $y_m^l \doteq z_{k_m}^{j_l} \cap X_{\gamma_0}$;
    \item $\tilde{E} \subset X_{\gamma_0}$ a suitably closed set containing $(x^0 \cup x^1) \cap X_{\gamma_0}$ and $y_m^l$, for each $l \in \{a,b\}$ and $m \in \omega$, so that $|\tilde{E} \setminus \bigcup \{y_m^l: l \in \{a,b\}, \ m \in \omega\}|= \omega$;
    \item $I = \tilde{E} \cap X_{\gamma_0}^0$;
    \item $\{q_{\xi}: \xi \in I\} \subset \mathcal{P}$ so that, for each $\xi \doteq x_{\beta}^{\gamma_0} \in I$, $q_{\xi}=p_{\beta}$;
    \item for each $\xi \doteq x_{\beta}^{\gamma_0} \in I$, $d_{\xi}=\{x_{\beta}^{\gamma_0}\}$;
    \item  for each $\xi \doteq x_{\beta}^{\gamma_0} \in I$, $g_{\xi} = f_{\beta}^{\gamma_0}$.
\end{itemize}

By Lemma \ref{caso2.2}, there exists a homomorphism $\tilde{\Phi}:[\tilde{E}]^{<\omega} \to 2$ so that:
\begin{enumerate}[i)]
    \item for every $s \in (x^0 \cup x^1) \cap X_{\gamma_0}$, $\tilde{\Phi}(\{s\})=0$;
    \item for every $\xi = x_{\beta}^{\gamma_0} \in I$,
    \[
    \tilde{\Phi}(\{x_{\beta}^{\gamma_0}\}) = p_{\beta}-\lim_{l \in \omega} \tilde{\Phi}(f_{\beta}^{\gamma_0}(l));
    \]
    \item $\{m \in \omega: (\tilde{\Phi}(y_m^a),\tilde{\Phi}(y_m^b))=(0,0)\}$ is finite. 
\end{enumerate}

Now, we may consider $E$ a suitably closed set containing $\tilde{E}$, $x^0 \cup x^1$, and $z_{k_m}^{j_l}$, for each $l \in \{a,b\}$ and $m \in \omega$, so that $E \cap X_{\gamma_0}= \tilde{E}$. Let $\Phi:[E]^{<\omega} \to 2$ be the homomorphism such that, for each $\xi \in E$,
\[
\Phi(\{\xi\}) =
 \begin{cases}
    \tilde{\Phi}(\{\xi\}), & \text{if} \ \xi \in \tilde{E}\\
   0, & \text{if }\xi \notin \tilde{E}.
\end{cases}
\]
Then, $\Phi \in \mathcal{A}$, 
\[
\Phi(x^0) = \Phi(x^1)=0,
\]
and, for each $l \in \{a,b\}$ and $m \in \omega$,
\[
\Phi(z_{k_m}^{j_l}) = \tilde{\Phi}(y^l_m) + \Phi(z_{k_m}^{j_l} \setminus X_{\gamma_0}) = \tilde{\Phi}(y^l_m).
\]
Thus, we conclude that 
\[
\Big\{m \in \omega: (\Phi(z_{k_m}^{j_a}), \Phi(z_{k_m}^{j_b})) = (\Phi(x^0), \Phi(x^1))  \Big\}
\]
is finite. Since, by construction, there exists $\mu \in [\omega, 2^{\mathfrak{c}})$ so that $\sigma_{\mu}= \Phi$, we conclude that $x$ cannot be an accumulation point of $\{(z_{k_m}^{j_a}, z_{k_m}^{j_b}): m \in \omega\}$. Since $x \in G^2$ is arbitrary, $\{z_{k_m}: m \in \omega\} \subset Z$ does not have an accumulation point in $G^{\kappa^+}$. 

Therefore, $G^{\kappa^+}$ is not countably pracompact.

\end{proof}

\end{proof}

\section{Some remarks and questions}

As we already mentioned in the first section, in the case of selectively pseudocompact groups, the Comfort-like Question \ref{comsel} is still not solved consistently only for the case $\alpha = \omega$:

\begin{Ques}
Is there a topological group $G$ so that $G^{k}$ is selectively pseudocompact for every $k \in \omega$, but $G^{\omega}$ is not selectively pseudocompact?
\end{Ques}

In the case there is a positive consistent answer to the  above question, one can also ask:

\begin{Ques}
Is there a topological group $G$ so that $G^{k}$ is countably compact for every $k \in \omega$, but $G^{\omega}$ is not selectively pseudocompact?
\end{Ques}

Regarding countably pracompact topological groups, Theorem \ref{sucessorcardinal} for $\kappa = 2^{\mathfrak{c}}$ shows that there exists a group $G$ so that $G^{2^{\mathfrak{c}}}$ is countably pracompact but $G^{(2^{\mathfrak{c}})^{+}}$ is not countably pracompact. Interestingly, for countably compact spaces we know that this is not the case: given a Hausdorff topological space $X$, if $X^{2^{\mathfrak{c}}}$ is countably compact, then $X^{\alpha}$ is countably compact for every $\alpha > 2^{\mathfrak{c}}$. Thus, it may be interesting to study the following questions further. The first one is a stronger version of Question \ref{compra} for $\alpha= \omega$, which we solved in this paper.

\begin{Ques}
Is there a topological group $G$ so that $G^{k}$ is countably compact for every $k \in \omega$ and $G^{\omega}$ is not countably pracompact?
\end{Ques}

\begin{Ques}
For which limit cardinals $\omega < \alpha \leq 2^{\mathfrak{c}}$ is there a topological group $G$ such that $G^{\gamma}$ is countably pracompact for every cardinal $\gamma < \alpha$, but $G^\alpha$ is not countably pracompact?
\end{Ques}

\begin{Ques}
For which cardinals $\alpha > (2^{\mathfrak{c}})^{+}$ is there a topological group $G$ such that $G^{\gamma}$ is countably pracompact for all cardinals $\gamma < \alpha$, but $G^{\alpha}$ is not countably pracompact?
\end{Ques}

Also, it is natural to ask which the \textit{stopping point} is, if any:

\begin{Ques}
Is there a cardinal $\kappa$ such that, for each topological group $G$, $G^\kappa$ countably pracompact implies that $G^\gamma$ is countably pracompact for every $\gamma > \kappa$?
\end{Ques}

In ZFC, as mentioned, we do not even know answers to the following questions.

\begin{Ques}[ZFC]
\begin{enumerate}[a)] 

    \item Is there a selectively pseudocompact group whose square is not selectively pseudocompact?
    \item \textbf{(stronger version)} Is there a countably compact group whose square is not selectively pseudocompact?
\end{enumerate}
\end{Ques}

\begin{Ques}[ZFC]
\begin{enumerate}[a)]

\item Is there a countably pracompact group whose square is not countably pracompact?
\item \textbf{(stronger version)} Is there a countably compact group whose square is not countably pracompact?
\end{enumerate}
\end{Ques}

\bibliographystyle{plain}
\bibliography{main}
\Addresses

\end{document}